\newcommand*\diff{\mathop{}\!\mathrm{d}}
\begin{document}
	\title[ \MakeLowercase{p}-Laplacian in Exterior Domains]
	{The Fredholm Alternative for The \MakeLowercase{$p$}-Laplacian in exterior domains}
	
	\author[P. Dr\'abek, K. Ho \& A. Sarkar]
	{Pavel Dr\'{a}bek, Ky Ho and Abhishek Sarkar}
	\address{Pavel Dr\'{a}bek \newline
		Department of Mathematics, University of West Bohemia, Univerzitn\'i 8, 306 14 Plze\v{n}, Czech Republic}
	\email{pdrabek@kma.zcu.cz}
	\address{Ky Ho \newline
		NTIS, University of West Bohemia, Technick\'a 8, 306 14 Plze\v{n}, Czech Republic}
	\email{ngockyh@ntis.zcu.cz}
	\address{Abhishek Sarkar \newline
		NTIS, University of West Bohemia, Technick\'a 8, 306 14 Plze\v{n}, Czech Republic}
	\email{sarkara@ntis.zcu.cz}
	
	\subjclass[2010]{35J92, 35J60, 35J20, 35P30, 35J62, 35B40}
	\keywords{$p$-Laplacian; Fredholm alternative; the first eigenvalue; exterior domain; variational method}
	
	\begin{abstract}
		We investigate the Fredholm alternative for the $p$-Laplacian in an exterior domain which is the complement of the closed unit ball in $\mathbb{R}^N$ ($N\geq 2$). By employing techniques of Calculus of Variations we obtain the multiplicity of solutions. The striking difference between our case and the entire space case is also discussed.
	\end{abstract}
	
	\maketitle
	\numberwithin{equation}{section}
	\newtheorem{theorem}{Theorem}[section]
	\newtheorem{lemma}[theorem]{Lemma}
	\newtheorem{proposition}[theorem]{Proposition}
	\newtheorem{corollary}[theorem]{Corollary}
	\newtheorem{definition}[theorem]{Definition}
	\newtheorem{example}[theorem]{Example}
	\newtheorem{remark}[theorem]{Remark}
	\allowdisplaybreaks
	
	\section{Introduction}
	
	The Fredholm alternative for the $p$-Laplacian has been studied on both bounded domains in $\mathbb{R}^N$ and the entire space $\mathbb{R}^N.$ In this paper we investigate the existence and multiplicity of solutions of the following problem
	\begin{align}\label{1.1}
	\begin{cases}
	-\Delta_pu=\lambda K(x)|u|^{p-2}u+h \quad \text{in } B_1^c ,\\
	u=0\quad \text{on } \partial B_1 ,
	\end{cases}
	\end{align}
	where $\Delta_pu:=\operatorname{div}\left(  |\nabla u| ^{p-2}\nabla u\right)$ is the $p$-Laplacian with $p>1$, $B_1^c$ is the complement of the closed unit ball $B_1$ in $\mathbb{R}^N,$ $\lambda>0$ is a parameter, the weight $K$ and the function $h$ will be specified later.
	
	In a bounded domain $\Omega$ of $\mathbb{R}^N,$ similar problems (with $K(x)\equiv 1$) have been studied in numerous papers. For the references we refer the reader to survey papers by Tak\'{a}\v{c}\cite{Takac2006,Takac2010} and the references therein.
	
	In the case of the entire space $\mathbb{R}^N,$ Alziary et al. \cite{Alziary} studied the solvability of the equation
	\begin{equation}\label{I.eq.entire}
	-\Delta_pu=\lambda m(x)|u|^{p-2}u+f \quad \text{in } \mathbb{R}^N,\ u\in \mathcal{D}^{1,p}(\mathbb{R}^N),
	\end{equation}
	where $1<p<N$ and the Sobolev space $\mathcal{D}^{1,p}(\mathbb{R}^N)$ is defined to be the completion of $C_c^1(\mathbb{R}^N)$ with respect to the norm
	$$\|u\|_{\mathcal{D}^{1,p}(\mathbb{R}^N)}=\left(\int_{\mathbb{R}^N}|\nabla u|^p \diff x\right)^{1/p}.$$
	They studied problem \eqref{I.eq.entire} with a radially symmetric and measurable weight $m(x)=m(|x|)$ satisfying
	\begin{equation}\label{I.weight.RN}
0<m(r)\leq \frac{C}{(1+r)^{p+\mu}}\quad \text{a.e. in}\  [0,\infty),
	\end{equation}
	with some constants $\mu>0$ and $C>0.$ 
	Let $\widetilde{\lambda}_1>0$ be the first eigenvalue and $\widetilde{\varphi}_1$ be the corresponding positive eigenfunction of $-\Delta_p$ in $\mathbb{R}^N$ relative to the weight $m(|x|)$; for the existence of the first eigenpair see, for example \cite{Stavrakakis,Lucia} and the references therein. For a given $f^{\ast} \in [\mathcal{D}^{1,p}(\mathbb{R}^N)]^*$ (the dual space of $\mathcal{D}^{1,p}(\mathbb{R}^N))$, satisfying $\langle f^{\ast}, \widetilde{\varphi}_1 \rangle=0$ (where $\langle \cdot,\cdot \rangle$ denotes the duality pairing between $\mathcal{D}^{1,p}(\mathbb{R}^N)$ and $[\mathcal{D}^{1,p}(\mathbb{R}^N)]^*$), the authors of \cite{Alziary} obtained the existence of at least one solution of \eqref{I.eq.entire} for $2\leq p<N$ with $\lambda=\widetilde{\lambda}_1$ and $f = f^{\ast}$, and for $1<p<2\leq N$ with $\lambda \in ( \widetilde{\lambda}_1 -\epsilon, \widetilde{\lambda}_1+\epsilon),$ $\epsilon >0$ small, and $f$ in a neighbourhood of $f^{\ast}$. 
	
	To obtain the existence of solutions, the authors of \cite{Alziary} used variational arguments but treated the two cases $1<p<2\leq N$ and $2\leq p<N$ in a different way. As a by-product, for the resonant case $\lambda =\widetilde{\lambda}_1$, they proved ``a saddle point geometry" of the energy functional associated with \eqref{I.eq.entire} when $1<p<2\leq N$. On the other hand, they used an improved Poincar\'e inequality when $p\leq 2<N$ and showed that the energy functional has a ``global minimizer geometry".
	
	In the case of an exterior domain, Anoop et al. \cite{Anoop.NA} discussed the existence of solution of problem \eqref{1.1} with a weaker assumption on weight than in \cite{Alziary} (see Definition~\ref{II.def.weight} in the next section). By using the Fredholm alternative for the $p$-Laplacian due to Fu\v{c}\'{i}k et al. \cite[Chapter II, Theorem 3.2]{Fucik}, they obtained the existence of solutions for problem \eqref{1.1} when $\lambda\in (0,\lambda_1+\delta)\setminus\{\lambda_1\}$ for some $\delta>0,$ where $\lambda_1$ is the first eigenvalue of $-\Delta_p$ in $B_1^c$ relative to the weight $K$ (see \cite[Proposition 3.1]{Anoop.NA}).
	
	The goal of this paper is to obtain 
	multiple solutions of \eqref{1.1} for the resonant case $\lambda=\lambda_1$ with a weaker assumption on the weight than in \cite{Alziary}. This work can be seen as a complement to the Fredholm alternative for the $p$-Laplacian in an exterior domain  for the resonant case. It is worth mentioning that to deal with the resonant case, we apply the second order Taylor formula for the energy functional associated with \eqref{1.1} at the first eigenfunction $\varphi_1$ of $-\Delta_p$ in $B_1^c.$ To apply Taylor formula, we need to employ weighted spaces in terms of $\varphi_1$ with the weights singular or degenerate, on the set $\{\nabla \varphi_1=0\}.$ Surprisingly, the case of an exterior domain differs substantially from the case of  the entire space $\mathbb{R}^N$. The important point to note here is the fact that, if $K$ is radially symmetric and satisfies certain decay condition, the set $\{\nabla \varphi_1=0\}$ is a removable set (i.e., the set of zero capacity) in the case of the entire space $\mathbb{R}^N$, whereas this is not true in the case of an exterior domain 
	(see, e.g., Remarks~\ref{II.rmk.WeakDerivative}, \ref{II.rmk.proof_of_simplicity} and \ref{III.rmk.compare}). For this reason, to obtain a saddle point geometry of the energy functional in the resonant case when $1<p<2$, we need to introduce a new condition for the source term $h$, which is of independent interest. 
	
	The rest of the paper is organized as follows. In Section~\ref{Sec.Preliminaries},
	we review properties of the first eigenpair of $-\Delta_p$ in $B_1^c$ obtained in \cite{Anoop.CV, Anoop.NA} and then we prove more properties of the first eigenfunction. In this section we also introduce suitable weighted function spaces. In Section~\ref{Poincare.Inequality}, by employing weighted spaces introduced in the previous section we obtain an improved Poincar\'e inequality (Proposition~\ref{II.Prop.Poincare}) for our solution space when $2<p<N$. 
	In Section~\ref{Saddle.point.property}, we establish a saddle point geometry of the energy functional (Proposition~\ref{III.Prop.saddle}) in the resonant case when $1<p<2$. Section~\ref{Sec.Existence} is devoted to the investigation of the existence and multiplicity of solutions for \eqref{1.1}. In this section we complete the Fredholm alternative for the $p$-Laplacian in an exterior domain. More precisely, when $\lambda=\lambda_1$ and the source term $h$ is in a neighbourhood of given $h^\ast$ satisfying $\langle h^\ast,\varphi_1\rangle=0$ we obtain a solution for problem \eqref{1.1} by using the saddle point geometry of the energy functional and the improved Poincar\'e inequality when $1<p<2$ and $2<p<N$, respectively. If in addition the source term $h$ satisfies $\langle h,\varphi_1\rangle\ne 0$, we obtain a second solution for problem \eqref{1.1} that is a Mountain Pass type solution. For $p=2$ we recover the classical Fredholm alternative for the Laplace equations in an exterior domain. It is worth mentioning that the conditions on the weight $K$ and the dimension $N$ are relaxed in the linear case. Our main results are stated in Theorems~\ref{V.Theo.singular} and \ref{V.Theo.degenerate}. Finally, we provide proofs of several auxiliary results in Appendices \ref{AppendixA}--\ref{AppendixD}. 
	
	
	\section{Abstract framework and preliminary results}\label{Sec.Preliminaries}
	\subsection{The solution space} We study problem \eqref{1.1} with an admissible weight $K$ defined as follows. 
	\begin{definition}\rm\label{II.def.weight}
		We say that $K$ is \emph{admissible} if $K\in L_{loc}^1(B_1^c)$, $\operatorname{meas}\{x\in B_1^c: K(x)>0\}>0$ and there exists a positive function $w$ such that 
		\begin{itemize}
			\item[(i)]  $w\in \begin{cases}
			L^1((1,\infty);r^{p-1}),\ p\ne N,\\
			L^1((1,\infty);[r\log r]^{N-1}),\ p=N;
			\end{cases}$
			\item[(ii)] $|K(x)|\leq w(|x|)$ for a.e. $x\in B_1^c.$ 
			
		\end{itemize}  
	\end{definition}
	We look for solutions of \eqref{1.1} in $\mathcal{D}_0^{1,p}(B_1^c),$ the completion of $C_c^1(B_1^c)$ ($C^1$ functions with compact support) with respect to the norm
	$$\|u\|=\left(\int_{B_1^c}|\nabla u|^p\diff x\right)^{1/p}.$$ This space is a well defined uniformly convex Banach space with the following properties. 
	\begin{lemma}[\cite{Anoop.CV}]
		The following embeddings hold:
		\begin{itemize}
			\item[(i)]$\mathcal{D}_0^{1,p}(B_1^c)\hookrightarrow L_{loc}^p(B_1^c),$
			\item[(ii)]$\mathcal{D}_0^{1,p}(B_1^c)\hookrightarrow W_{loc}^{1,p}(B_1^c),$
			\item[(iii)]$\mathcal{D}_0^{1,p}(B_1^c)\hookrightarrow \hookrightarrow L^{p}(B_1^c;w(|x|)).$
		\end{itemize}
	\end{lemma}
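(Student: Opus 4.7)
The plan is to prove the three embeddings in order, relying on the Dirichlet boundary condition at $\partial B_1$ for the local statements and on a Hardy-type radial inequality adapted to condition (i) of Definition~\ref{II.def.weight} for the compact weighted embedding.

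For (i) and (ii), I would fix $R>1$ and set $A_R=B_R\cap B_1^c$. Since elements $u\in C_c^1(B_1^c)$ vanish in a neighborhood of $\partial B_1$, the standard Poincar\'e inequality applies on the annulus $A_R$ (whose boundary contains $\partial B_1$), giving $\|u\|_{L^p(A_R)}\le C_R\|\nabla u\|_{L^p(A_R)}\le C_R\|u\|$. Density of $C_c^1(B_1^c)$ in $\mathcal{D}_0^{1,p}(B_1^c)$ yields (i), and combining with $\|\nabla u\|_{L^p(A_R)}\le\|u\|$ gives (ii).

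For (iii), the key preliminary step is a global weighted estimate $\int_{B_1^c}w(|x|)|u|^p\,\diff x\le C\|u\|^p$. In polar coordinates, for $u\in C_c^1(B_1^c)$ and $p\ne N$, I would write $u(r,\theta)=-\int_r^\infty \partial_\rho u(\rho,\theta)\,\diff\rho$ (for $p<N$; for $p>N$ I would integrate from $1$ instead, using $u(1,\theta)=0$) and apply H\"older's inequality with the weight $\rho^{N-1}$. A direct computation shows $|u(r,\theta)|^p\le C\,r^{p-N}\int|\nabla u|^p\rho^{N-1}\diff\rho$, so that integrating against $w(r)r^{N-1}$ produces exactly the factor $r^{p-1}$ appearing in (i). For $p=N$, integration from the inner boundary with H\"older gives the factor $(\log r)^{N-1}$, matching the weight $[r\log r]^{N-1}$ in Definition~\ref{II.def.weight}. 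In both cases the resulting integral is finite by assumption (i), yielding the continuous embedding $\mathcal{D}_0^{1,p}(B_1^c)\hookrightarrow L^p(B_1^c;w(|x|))$.

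For compactness, I would take a bounded sequence $u_n\rightharpoonup u$ in $\mathcal{D}_0^{1,p}(B_1^c)$. Given $\varepsilon>0$, the weighted estimate applied to $u_n-u$ on $\{|x|>R\}$ (which uses only $|\nabla(u_n-u)|$ there) together with the integrability of $r\mapsto w(r)r^{p-1}$ (resp.\ $w(r)[r\log r]^{N-1}$) makes the tail $\int_{|x|>R}w|u_n-u|^p\diff x$ smaller than $\varepsilon$ uniformly in $n$ once $R$ is large. On the bounded piece $A_R$, local compactness from (i)--(ii) plus Rellich--Kondrachov combined with a subsequence/dominated-convergence argument (and the local integrability of $w$, which follows from condition (i)) provides $\int_{A_R}w|u_n-u|^p\diff x\to 0$, completing (iii). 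The main obstacle I anticipate is calibrating the Hardy-type inequality so that the radial weight produced on the right-hand side exactly matches condition (i) in each of the regimes $p<N$, $p=N$, $p>N$; once that alignment is achieved the rest of the argument is a standard splitting into tail and compact part.
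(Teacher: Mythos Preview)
The paper does not supply a proof for this lemma; it is quoted from \cite{Anoop.CV} without argument, so there is nothing in the present paper to compare your proposal against. Your outline is the natural route and is essentially correct: Poincar\'e on annuli for (i)--(ii), a radial Hardy-type bound producing exactly the factor $r^{p-1}$ (resp.\ $[r\log r]^{N-1}$) for the continuous part of (iii), and a tail/compact-part splitting for compactness.

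One point to tighten: on the bounded annulus $A_R$, Rellich--Kondrachov gives $u_n\to u$ in $L^p(A_R)$, but $w$ need not be locally bounded (Definition~\ref{II.def.weight} only yields $w(r)r^{p-1}\in L^1$), so plain dominated convergence does not immediately give $\int_{A_R}w|u_n-u|^p\diff x\to 0$. A clean fix is to truncate $w$ at a large level $M$: on $\{w\le M\}$ Rellich suffices, while on the radial set $\{r:w(r)>M\}$ (whose one-dimensional measure can be made small since $w(r)r^{p-1}\in L^1$) your own pointwise Hardy bound $|v(r,\theta)|^p\le C\,r^{p-N}\int|\partial_\rho v|^p\rho^{N-1}\diff\rho$, combined with absolute continuity of the measure $w(r)r^{p-1}\diff r$, makes that contribution uniformly small in $n$. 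With this adjustment the argument goes through.
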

Throughout this paper, we denote $$X:=\left(\mathcal{D}_0^{1,p}(B_1^c),\|\cdot\|\right),$$
and by $X^\ast$ the dual space of $X$.	The following definition of (weak) solution makes sense, thanks to the embedding, $X\hookrightarrow L^{p}(B_1^c;w(|x|)).$
	\begin{definition}\rm\label{II.def.weaksolution}
	Let $K$ be an admissible weight and let $h\in X^\ast$. By a \emph{ (weak) solution} of problem \eqref{1.1}, we mean a function $u\in X$ satisfying
		$$\int_{B_1^c}|\nabla u|^{p-2}\nabla u\cdot \nabla v \diff x=\lambda \int_{B_1^c}K(x)|u|^{p-2}uv\diff x+\langle h,v\rangle, \quad \forall v\in  X,$$ where $\langle .,. \rangle$ denotes the duality pairing between $X$ and $X^{\ast}$.\\
		When $h\equiv 0,$ $\lambda$ is called an \emph{eigenvalue} of $-\Delta_p$ in $B_1^c$ related to the weight $K$ (an eigenvalue, for short) if problem \eqref{1.1} has a nontrivial solution $u,$ and such a solution $u$ is called an \emph{eigenfunction} corresponding to the eigenvalue $\lambda.$
	\end{definition}
	In what follows, for $1<\alpha<\beta$ set
	\begin{align*}
	B_\alpha&:=\{x\in\mathbb{R}^N: |x|\leq \alpha\}, \ B_{\alpha}^c:= \mathbb{R}^N \setminus B_{\alpha}, \\ A_\alpha^\beta:=\{x\in\mathbb{R}^N&: \alpha< |x|<\beta \},\ S_\alpha:=\{x\in\mathbb{R}^N: |x|=\alpha\},
	\end{align*}
	and by $|S|$ denote the Lebesgue measure of $S\subset \mathbb{R}^N.$ For a normed linear space $E$, the symbol 
	$B_E(u,\rho)$ stands for 
	 the open ball centered at $u$ with radius $\rho$ in $E$.
	
	\subsection{Properties of the first eigenpair $(\lambda_1,\varphi_1)$}
	
	It was shown in \cite{Anoop.CV, Anoop.NA} that, for an admissible weight $K$ 
	we have 
	$$\lambda_1:=\inf \left\{\int_{B_1^c}|\nabla u|^p\diff x:\ u\in X,\int_{B_1^c}K(x)|u|^p\diff x=1\right\}>0.$$
	It is a simple eigenvalue of
	\begin{equation}\label{II.eigen}
	\begin{cases}
	-\Delta_pu=\lambda K(x)|u|^{p-2}u \quad \text{in } B_1^c ,\\
	u=0\quad \text{on } \partial B_1.
	\end{cases}
	\end{equation}
	Furthermore, the infimum above is achieved at an eigenfunction $\varphi_1$, which is positive  a.e. in $B_1^c.$ If we assume, in addition, $K\in L^s(A_1^R)\cap L_{loc}^\infty(B_1^c)$ for some $s>\frac{N}{p}$ and $R>1$ when $1<p\leq N$ or $K \in L_{loc}^\infty(B_1^c)$ when $p>N$ then  $\lambda_1$ is an isolated eigenvalue and $\varphi_1\in C^1(B_1^c)$ and $\varphi_1>0$ in $B_1^c$. If  $K\in L^\infty(A_1^R)$ for all $R>1$ then $\varphi_1\in C^{1,\alpha}(\overline{A_1^R})$ for all $R>1,$ where $\alpha=\alpha(R)\in (0,1).$ 
	Thus, applying the strong maximum principle by V\'azquez \cite[Theorem 5]{Vazquez} to
	$$-\Delta_p \varphi_1+\lambda_1 \|K\|_{L^\infty(A_1^R)}\varphi_1^{p-1}\geq -\Delta_p \varphi_1+\lambda_1 K^-(x)\varphi_1^{p-1}=\lambda_1 K^+(x)\varphi_1^{p-1}\geq 0\quad \text{in}\ A_1^R,$$
	where $K^+=\max\{K,0\}, K^-=K^+-K$, we have
	\begin{equation*}\label{II.reg}
	\frac{\partial \varphi_1}{\partial \nu}
	(x)<0,\ \forall x\in \partial B_1,
	\end{equation*}
	where $\nu$ is the unit outward normal vector to $\partial B_1$ at $x.$ From these facts, if  $K\in L^\infty(A_1^R)$ for all $R>1$, we deduce that 
	$$\mathcal{A}:=\{x\in B_1^c: \nabla\varphi_1(x)=0\},$$
	is a closed set in $\mathbb{R}^N$ and 
	$$\operatorname{dist}(\mathcal{A},\partial B_1)>0.$$
	Clearly, if the admissible weight $K$ is positive a.e. in $B_1^c$ then $\operatorname{int}(\mathcal{A})=\emptyset.$
	Moreover, $|\mathcal{A}|=0$ if we assume a stronger assumption on $K$ as shown in the following lemma.
	
	\begin{lemma}\label{II.meas.}
		Assume that the weight $K$ satisfies
		\begin{itemize}
			\item[(A)] $K$ is an admissible weight such that $K>0$ a.e. in $B_1^c$, $K\in L^s(A_1^R)\cap L_{loc}^\infty(B_1^c)$ for some $s>\frac{N}{p}$ and $R>1$ when $1<p\leq N$ or $K\in L_{loc}^\infty(B_1^c)$ when $p>N$.
		\end{itemize} Then 
		$$|\mathcal{A}|=0.$$
	\end{lemma}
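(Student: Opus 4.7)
The plan is to argue by contradiction: suppose $|\mathcal{A}|>0$. Assumption~(A) together with $\varphi_1>0$ in $B_1^c$ forces the right-hand side $\lambda_1 K(x)\varphi_1(x)^{p-1}$ of the eigenvalue equation to be strictly positive a.e.\ in $B_1^c$. I therefore aim to show that, in an a.e.\ sense, $\Delta_p\varphi_1=0$ on $\mathcal{A}$; combined with the equation, this will yield $\lambda_1 K\varphi_1^{p-1}=0$ a.e.\ on $\mathcal{A}$, contradicting $|\mathcal{A}|>0$.

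To implement this, set $A(x):=|\nabla\varphi_1(x)|^{p-2}\nabla\varphi_1(x)$ (extended by $0$ at critical points, which is continuous for $p>1$), so that the equation reads $-\operatorname{div} A=\lambda_1 K\varphi_1^{p-1}$. Under~(A) and the $C^{1,\alpha}_{loc}$ regularity of $\varphi_1$ recalled in the preceding discussion, the right-hand side is locally bounded. I would then invoke nonlinear Calder\'on--Zygmund-type regularity for the $p$-Laplacian (Uhlenbeck-type estimates for $p\ge 2$, Damascelli--Sciunzi-type estimates for $1<p<2$) to conclude that each component of $A$ lies in $W^{1,q}_{loc}(B_1^c)$ for some $q\ge 1$.

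Since $A=0$ on $\mathcal{A}$ componentwise, Stampacchia's lemma (the weak gradient of a Sobolev function vanishes a.e.\ on its zero set) yields $\nabla A=0$ a.e.\ on $\mathcal{A}$, hence $\operatorname{div} A=0$ a.e.\ on $\mathcal{A}$. Substituting back into the equation gives $\lambda_1 K\varphi_1^{p-1}=0$ a.e.\ on $\mathcal{A}$, which, together with $K>0$ a.e.\ and $\varphi_1>0$, forces $|\mathcal{A}|=0$, producing the desired contradiction.

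I expect the main obstacle to be the regularity step: one needs $A\in W^{1,q}_{loc}$ to hold \emph{through} the critical set $\mathcal{A}$, which is precisely where the equation degenerates ($p>2$) or is singular ($1<p<2$). For $p=2$ this is classical Schauder/Calder\'on--Zygmund theory. For $p\ne 2$ one must quote the correct nonlinear regularity result, possibly passing through $|\nabla\varphi_1|^{(p-2)/2}\nabla\varphi_1\in W^{1,2}_{loc}$ and deriving $W^{1,1}_{loc}$ for $A$ itself via H\"older's inequality and the local boundedness of $|\nabla\varphi_1|$. Once the regularity is in place, the Stampacchia step and the final contradiction are essentially immediate.
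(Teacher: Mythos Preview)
Your strategy is sound and is, in essence, the argument underlying the result the paper simply cites. The paper's own proof is a two-line application of \cite[Theorem~1.1]{Lou}: localize to the bounded annuli $A_{1+1/n}^n$, observe that $f:=\lambda_1 K\varphi_1^{p-1}$ is strictly positive a.e.\ and lies in $L^\infty$ there (by assumption~(A) and the $C^1$ regularity of $\varphi_1$), and invoke Lou's theorem---which asserts precisely that the critical set of a local weak solution of $-\Delta_p u=f$ with such a right-hand side has Lebesgue measure zero---on each annulus; the countable union finishes. Your route instead reconstructs the mechanism behind that theorem: obtain $A=|\nabla\varphi_1|^{p-2}\nabla\varphi_1\in W^{1,q}_{loc}$, then use the Stampacchia-type fact that a Sobolev function has vanishing weak gradient a.e.\ on its zero level set to force $\operatorname{div}A=0$ a.e.\ on $\mathcal{A}$, contradicting the equation. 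The Stampacchia step and the conclusion are indeed immediate once the regularity is in hand; the regularity step---which you rightly flag as the crux---is nontrivial and is exactly the technical content packaged in Lou's paper (or in the Damascelli--Sciunzi weighted second-order estimates you mention). So your argument is correct in outline but effectively reproves the cited lemma, whereas the paper defers that work to the literature. Your version is self-contained and makes transparent \emph{why} strict positivity of the source forces $|\mathcal{A}|=0$; the paper's version is shorter. One minor remark: the contradiction framing is unnecessary, since your argument directly yields $f=0$ a.e.\ on $\mathcal{A}$ and hence $|\mathcal{A}|=0$.
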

	
	\begin{proof}
		Note that, $\varphi_1\in C^1(B_1^c)$ and $\varphi_1(x)>0$ for all  $x\in B_1^c.$ Set $f:=\lambda_1 K \varphi_1^{p-1}.$ Then for each $n\in\mathbb{N}\setminus\{1\},$ $\varphi_1$ satisfies
		$$\int_{A_{1+1/n}^n}|\nabla\varphi_1|^{p-2}\nabla\varphi_1\cdot \nabla v \diff x= \int_{A_{1+1/n}^n}fv\diff x, \quad \forall v\in  C_c^\infty(A_{1+1/n}^n).$$
		Since $f>0$ a.e. in $A_{1+1/n}^n$ and  $f\in L^\infty(A_{1+1/n}^n)$, we deduce $|\{x\in A_{1+1/n}^n: \nabla \varphi_1(x)=0\}|=0 $ in view of \cite[Theorem 1.1]{Lou}. Consequently, we obtain the desired conclusion.
	\end{proof}
	
	Next, we provide a result regarding the behavior of $\varphi_1$ and $\nabla \varphi_1$ at infinity which is similar to \cite[Proposition 9.1]{Alziary} but need a weaker assumption on weights. The next result is an improvement of corresponding results obtained in \cite{Anoop.CV, Chhetri-Drabek}. 
	\begin{proposition}\label{II.behavior.of.phi_1}
		Let $1<p<N$ and assume that the weight $K$ satisfies
		\begin{itemize}
			\item[(H)] $K(x)=K(|x|)>0$ for a.e. $x\in B_1^c,$ $K\in L^\infty(B_1^c)$ and $K\in L^1((1,\infty);r^\delta)$ for some $\delta\in (p-1,N-1)$.
		\end{itemize}
		Then $\varphi_1$ is radially symmetric, i.e.,  $\varphi_1(x)=\varphi_1(|x|)$ and there exists a constant $C>0$ such that
		\begin{equation}\label{II.behavior1}
		\lim_{r\to\infty}\left(r^{\frac{N-p}{p-1}}\varphi_1(r)\right)=C,
		\end{equation}
		\begin{equation}\label{II.behavior2}
		\lim_{r\to\infty}\left(r^{\frac{N-1}{p-1}}\varphi'_1(r)\right)=-\frac{N-p}{p-1}C.
		\end{equation}
	\end{proposition}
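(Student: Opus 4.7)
My plan is to reduce to a radial ODE and exploit the monotone quantity $\psi(r) := r^{N-1}|\varphi_1'|^{p-2}\varphi_1'$. Since $K$ is radial and $\lambda_1$ is simple (recalled above), for every rotation $R\in O(N)$ the function $\varphi_1\circ R$ is again a positive minimizer of the Rayleigh quotient and must therefore coincide with $\varphi_1$; hence $\varphi_1=\varphi_1(r)$ satisfies on $(1,\infty)$ the radial ODE
\[
-\bigl(r^{N-1}|\varphi_1'|^{p-2}\varphi_1'\bigr)' = \lambda_1 r^{N-1} K(r)\varphi_1^{p-1},\qquad \varphi_1(1)=0.
\]
Thus $\psi'(r)=-\lambda_1 r^{N-1}K(r)\varphi_1^{p-1}(r)\leq 0$, and the Hopf estimate $\varphi_1'(1)<0$ already established in the preceding discussion gives $\psi(1)<0$. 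Monotonicity then forces $\psi<0$ throughout $(1,\infty)$, so $\varphi_1$ is strictly decreasing with some nonnegative limit at infinity.

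\textbf{Main step.} The heart of the proof is to show that
\[
L := -\lim_{r\to\infty}\psi(r) = -\psi(1) + \lambda_1\int_1^\infty s^{N-1}K(s)\varphi_1^{p-1}(s)\,ds
\]
is \emph{finite} and \emph{strictly positive}. Positivity is free from monotonicity: $L\geq -\psi(1)>0$. Finiteness is the delicate point, since the weight hypothesis only controls $\int_1^\infty r^\delta K(r)\,dr$ with $\delta<N-1$, so the excess factor $r^{N-1-\delta}$ must be absorbed into the decay of $\varphi_1^{p-1}$. I would start from the preliminary rate $\varphi_1(r)\lesssim r^{-(N-p)/p}$, which follows from $\varphi_1\in L^{p^*}(B_1^c)$ (Sobolev, using $p<N$, after extending by zero) combined with the monotonicity of $\varphi_1$. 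Feeding this into the integrated form of $\psi$ improves the bound on $|\varphi_1'|$ and, upon a second integration, on $\varphi_1$; iterating this bootstrap a finite number of times and exploiting $\delta>p-1$ closes the estimate and simultaneously yields $\varphi_1(r)\to 0$. This iterative improvement of the decay is what I expect to be the main obstacle.

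\textbf{From $\psi\to -L$ to the asymptotics.} Since $\varphi_1'<0$, the identity $\varphi_1'(r)=-(-\psi(r))^{1/(p-1)}r^{-(N-1)/(p-1)}$ gives
\[
\lim_{r\to\infty} r^{(N-1)/(p-1)}\varphi_1'(r) = -L^{1/(p-1)}.
\]
Integrating this relation from $r$ to $\infty$ using $\varphi_1(\infty)=0$ and applying L'H\^opital's rule (or a direct comparison with $s^{-(N-1)/(p-1)}$) yields
\[
\lim_{r\to\infty} r^{(N-p)/(p-1)}\varphi_1(r) = \frac{p-1}{N-p}\,L^{1/(p-1)} =: C > 0,
\]
which is \eqref{II.behavior1}. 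Substituting $L^{1/(p-1)} = \tfrac{N-p}{p-1}\,C$ in the previous limit then produces \eqref{II.behavior2}.
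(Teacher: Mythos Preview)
Your argument contains a sign error at the very start that invalidates the monotonicity claim. Since $\varphi_1(1)=0$ and $\varphi_1>0$ in $B_1^c$, Hopf's lemma gives $\varphi_1'(1)>0$, not $\varphi_1'(1)<0$ (the normal derivative that is negative is the one pointing \emph{out of} the domain $B_1^c$, i.e.\ toward the origin). Hence $\psi(1)>0$, and $\varphi_1$ is \emph{not} decreasing on all of $(1,\infty)$: it first increases, reaches a unique maximum at some $r_0>1$ (this is what the paper proves and uses), and only then decreases. Your positivity argument ``$L\ge -\psi(1)>0$'' therefore fails as written. The fix is to argue that $\psi$, being nonincreasing with $\psi(1)>0$ and with $\varphi_1\in L^{p^*}$ forcing $\varphi_1$ not to be eventually increasing, must vanish at a unique $r_0$; restrict the whole analysis to $[r_0,\infty)$, where $\psi(r_0)=0$ and $\psi<0$ for $r>r_0$, and positivity of $L$ follows from $\psi(r)\le\psi(r_1)<0$ for any fixed $r_1>r_0$.

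Once that is repaired, your route to finiteness of $L$ is genuinely different from the paper's. The paper passes to the Riccati-type quantity $U(r)=r^{p-1}\bigl(-\varphi_1'/\varphi_1\bigr)^{p-1}$, derives its first-order ODE, and shows by contradiction (via an integrating factor and the hypothesis $\delta>p-1$) that $\int_{r_0}^\infty a(s)\,ds<\infty$, which directly yields the limit of $r^{(N-p)/(p-1)}\varphi_1(r)$ without ever invoking Sobolev. Your bootstrap, starting from the $L^{p^*}$ bound $\varphi_1(r)\lesssim r^{-(N-p)/p}$ and iterating $\varphi_1\mapsto\psi\mapsto\varphi_1'\mapsto\varphi_1$, does work: each pass improves the decay exponent of $\varphi_1$ by the fixed amount $\tfrac{\delta}{p-1}-1>0$, so after finitely many steps the exponent exceeds $(N-1-\delta)/(p-1)$ and $\int s^{N-1}K\varphi_1^{p-1}\,ds$ converges. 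This is a legitimate alternative, but you should spell out the iteration explicitly and note that $\varphi_1(\infty)=0$ (needed to integrate $\varphi_1'$ from $r$ to $\infty$) already follows from monotonicity on $(r_0,\infty)$ together with $\varphi_1\in L^{p^*}$. The final L'H\^opital/integration step is correct.
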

	The proof is similar to that of \cite[Proposition 9.1]{Alziary} with a little modification. For reader's convenience we sketch the proof in the Appendix~\ref{AppendixA}.
	\begin{remark}\rm
		We note that, when $1<p<N$,  $\textrm{(H)}$ implies $\textrm{(A)}.$ The important point to note here is that for our case the assumption \eqref{I.weight.RN} on the weight in \cite{Alziary} reads
		\begin{equation}\label{II.compare.K}
			0<K(x)=K(|x|)\leq \frac{C}{|x|^{p+\mu}},\quad \text{for a.e.}\  x\in B_1^c,
		\end{equation}
			with some constants $\mu>0$ and $C>0.$ Clearly, a measurable weight $K$ satisfying \eqref{II.compare.K} also satisfies $\textrm{(H)}$ with $\delta=p-1+\mu_0$ for some $\mu_0\in (0,\min\{\mu,N-p\}).$ However, the reverse is not true. 
			The following example demonstrates this fact rather strikingly. 
	\end{remark}
	\begin{example}\label{II.Example}\rm
	Let $1<p<N$ and $\zeta>1,\iota >0$. Consider 
	  \begin{align*}
	    K_1(r):= \begin{cases}
	    	\frac{1}{r^p(1+\log r)}, \quad r \in \bigcup\limits_{n=1}^{\infty} [n,n+\frac{1}{n^{\zeta}}],\\
	    	\frac{1}{r^{p+\iota}}, \quad r \in [1,\infty) \setminus (\bigcup\limits_{n=1}^{\infty}[n,n+\frac{1}{n^{\zeta}}]),
	    \end{cases}
	    \end{align*} 
	    and 
	    \begin{align*}
	    K_2(r):= \begin{cases}
	    \frac{1}{r^p}, \quad r \in \bigcup\limits_{n=1}^{\infty} [n,n+\frac{1}{n^{\zeta}}],\\
	    \frac{1}{r^{p+\iota}}, \quad r \in [1,\infty) \setminus (\bigcup\limits_{n=1}^{\infty}[n,n+\frac{1}{n^{\zeta}}]).
	    \end{cases}
	    \end{align*}
	    Then, $K(x):=K_1(|x|)$ and $\widetilde{K}(x):=K_2(|x|)$ satisfy $\textrm{(H)}$ with 
	    $\delta=p-1+\iota_0$ for $0<\iota_0<\min\{1,\iota, N-p\}$ but $K$ and $\widetilde{K}$ do not satisfy \eqref{II.compare.K}.
	\end{example}
In the rest of the paper, we always assume the weight $K$ to be admissible and denote by $(\lambda_1,\varphi_1)$ the first eigenpair of problem \eqref{II.eigen}. Define, $$X^{\bot}:=\left\{u\in X: \int_{B_1^c}K(x)\varphi_1^{p-1}u\diff x=0\right\}.$$ 
Note that, $X^{\bot}$ is a weakly closed subspace of $X,$ thanks to the compactness of the embedding $X\hookrightarrow L^p(B_1^c;w).$


	\subsection{Weighted spaces in terms of $\varphi_1$ 
	}
We introduce the following weighted spaces in terms of $\varphi_1$.  These spaces will be implemented to obtain an improved Poincar\'e inequality on $X$ when $2<p< N$ in the next section. 
	
	For $p>2$ and $\textrm{(A)}$ being satisfied,  define $\mathcal{D}_{\varphi_1}$ to be the completion of $X$ with respect to the norm
	\begin{equation*}\label{II.norm.D_phi1}
	\|u\|_{\mathcal{D}_{\varphi_1}}:=\left(\int_{B_1^c}|\nabla \varphi_1|^{p-2}|\nabla u|^2\diff x\right)^{1/2}.
	\end{equation*}
	
	We also define $\mathcal{H}_{\varphi_1}$ as the space of all measurable functions $u:\ \mathbb{R}^N\to\mathbb{R}$ such that
	\begin{equation*}\label{II.norm.H_phi1}
	\|u\|_{\mathcal{H}_{\varphi_1}}:=\left(\int_{B_1^c}K(x)\varphi_1^{p-2}u^2\diff x\right)^{1/2}<\infty.
	\end{equation*}
	Clearly, the spaces $\mathcal{D}_{\varphi_1}$ and $\mathcal{H}_{\varphi_1}$ are Hilbert spaces. Hereafter, $\textrm{(A)}$ is always assumed whenever we mention the space $\mathcal{D}_{\varphi_1}$. 
	The embeddings in the next two lemmas are crucial. The next lemma can be obtained similarly in the entire space case (see \cite[Lemma 4.3]{Alziary}).
	\begin{lemma} \label{II.le.imb}
		Assume that $p>2$. Then $X\hookrightarrow\mathcal{D}_{\varphi_1}.  $
	\end{lemma}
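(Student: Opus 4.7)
The plan is straightforward: since $\mathcal{D}_{\varphi_1}$ is defined as the completion of $X$ with respect to $\|\cdot\|_{\mathcal{D}_{\varphi_1}}$, the inclusion map $X\hookrightarrow\mathcal{D}_{\varphi_1}$ is continuous once we establish the pointwise bound
\[
\|u\|_{\mathcal{D}_{\varphi_1}}\leq C\,\|u\|\qquad\text{for all } u\in X,
\]
with a constant $C$ independent of $u$. This immediately reduces the claim to a single Hölder estimate, which is the natural tool because the integrand $|\nabla \varphi_1|^{p-2}|\nabla u|^2$ splits as a product of a $(p-2)$-th power and a $2$-nd power whose exponents sum to $p$.

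Concretely, I would apply Hölder's inequality with conjugate exponents $\frac{p}{p-2}$ and $\frac{p}{2}$ (valid because $p>2$) to write
\[
\int_{B_1^c}|\nabla\varphi_1|^{p-2}|\nabla u|^2\,\diff x\leq\left(\int_{B_1^c}|\nabla\varphi_1|^{p}\,\diff x\right)^{\frac{p-2}{p}}\left(\int_{B_1^c}|\nabla u|^{p}\,\diff x\right)^{\frac{2}{p}}=\|\varphi_1\|^{p-2}\,\|u\|^{2}.
\]
Taking square roots yields $\|u\|_{\mathcal{D}_{\varphi_1}}\leq\|\varphi_1\|^{(p-2)/2}\|u\|$, which is the required continuous embedding.

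The only ingredient that needs to be in place is that $\varphi_1\in X$, so that $\|\varphi_1\|<\infty$; this is guaranteed by the variational characterization of $\lambda_1$ discussed just before the statement (the infimum defining $\lambda_1$ is attained at $\varphi_1\in X$). There is essentially no obstacle — no regularity argument, compactness, or spectral information is needed. The proof goes through verbatim for \emph{any} positive function in $X$ playing the role of $\varphi_1$; the specific properties of the first eigenfunction are not used in this lemma. The role of assumption $\mathrm{(A)}$ is only to guarantee that the weight $|\nabla\varphi_1|^{p-2}$ is well defined (e.g., $\varphi_1\in C^{1}(B_1^c)$), ensuring the integrand makes pointwise sense; the size estimate itself is purely a Hölder argument.
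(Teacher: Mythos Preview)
Your argument is correct and is exactly the standard one: the paper itself does not write out a proof but refers to \cite[Lemma~4.3]{Alziary}, whose proof is precisely the H\"older inequality with exponents $\frac{p}{p-2}$ and $\frac{p}{2}$ that you carry out. There is nothing to add.
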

 The following compact embedding result 
 is proved in the Appendix~\ref{AppendixB}. 
		 
	\begin{lemma}\label{II.le.compact.imb}
		Assume that $p>2.$ Then
		$$\mathcal{D}_{\varphi_1}\hookrightarrow \mathcal{H}_{\varphi_1}\ \ \text{and}\ \ \mathcal{D}_{\varphi_1}\hookrightarrow L^2(B_1^c;|\nabla \varphi_1|^p\varphi_1^{-2}).$$ 
		If in addition, $p<N$, $\textup{(H)}$ and $\displaystyle\lim_{\rho\to\infty}\underset{r\geq\rho}{\operatorname{ess\ sup}}\ r^pK(r)=0$ hold, then the embedding $\mathcal{D}_{\varphi_1}\hookrightarrow\mathcal{H}_{\varphi_1}$ is compact.
	\end{lemma}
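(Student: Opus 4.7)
The plan is to exploit the density of $X$ in $\mathcal{D}_{\varphi_1}$ and prove everything first for $u\in C_c^1(B_1^c)$, then pass to the closure. For the two continuous embeddings the main tool will be a Picone-type identity. For $u\in C_c^1(B_1^c)$ the function $v:=u^2/\varphi_1$ is a legitimate test function in $X$, since $\varphi_1\in C^1(B_1^c)$ is strictly positive on the compact support of $u$. Substituting $v$ into the weak formulation of \eqref{II.eigen} and expanding $\nabla v=2u\nabla u/\varphi_1-u^2\nabla\varphi_1/\varphi_1^2$ yields the identity
\begin{equation*}
\lambda_1\int_{B_1^c}K\varphi_1^{p-2}u^2\,\diff x+\int_{B_1^c}\frac{|\nabla\varphi_1|^p}{\varphi_1^2}u^2\,\diff x=2\int_{B_1^c}|\nabla\varphi_1|^{p-2}\nabla\varphi_1\cdot\nabla u\,\frac{u}{\varphi_1}\,\diff x.
\end{equation*}
Young's inequality $2ab\le a^2+b^2$ applied with $a=|\nabla\varphi_1|^{(p-2)/2}|\nabla u|$ and $b=|\nabla\varphi_1|^{p/2}|u|/\varphi_1$ cancels the second term on the left-hand side and gives $\lambda_1\|u\|_{\mathcal{H}_{\varphi_1}}^2\le\|u\|_{\mathcal{D}_{\varphi_1}}^2$; the weighted variant $2ab\le 2a^2+b^2/2$ instead absorbs the $|\nabla\varphi_1|^p u^2/\varphi_1^2$ term into the left (using $K\ge 0$ from (A)) and yields $\int_{B_1^c}|\nabla\varphi_1|^p\varphi_1^{-2}u^2\,\diff x\le 4\|u\|_{\mathcal{D}_{\varphi_1}}^2$.

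For the compactness statement, I would split the domain into an exterior tail $B_R^c$ and an inner annulus $A_1^R$ and argue separately. Under $p<N$, (H), and $\lim_{\rho\to\infty}\operatorname{ess\,sup}_{r\ge\rho}r^pK(r)=0$, Proposition~\ref{II.behavior.of.phi_1} gives $\varphi_1^p/|\nabla\varphi_1|^p\sim r^p$ at infinity, so for $R$ large the factorisation
\begin{equation*}
K\varphi_1^{p-2}=\bigl[r^pK(r)\bigr]\cdot\bigl[\varphi_1^p/(r^p|\nabla\varphi_1|^p)\bigr]\cdot|\nabla\varphi_1|^p\varphi_1^{-2}\le\eta(R)\,|\nabla\varphi_1|^p\varphi_1^{-2}
\end{equation*}
holds on $B_R^c$, with $\eta(R)\to 0$ as $R\to\infty$. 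Combined with the second continuous embedding obtained above, this yields the uniform tail estimate $\int_{B_R^c}K\varphi_1^{p-2}u_n^2\,\diff x\le C\eta(R)$ for any sequence bounded in $\mathcal{D}_{\varphi_1}$.

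On the annulus $A_1^R$ the weight $K\varphi_1^{p-2}$ lies in $L^\infty$, by (H) and the $C^{1,\alpha}$-regularity of $\varphi_1$. The $\mathcal{D}_{\varphi_1}$-norm degenerates on the closed set $\mathcal{A}=\{\nabla\varphi_1=0\}$, but $|\nabla\varphi_1|$ is bounded below by a positive constant on $A_1^R\setminus\mathcal{A}_\delta$ (where $\mathcal{A}_\delta$ denotes a $\delta$-neighbourhood of $\mathcal{A}$), the required lower bound near $\partial B_1$ being provided by the Hopf-type boundary estimate recorded before Lemma~\ref{II.meas.}. Hence any bounded sequence $(u_n)\subset\mathcal{D}_{\varphi_1}$ is bounded in $H^1(A_1^R\setminus\mathcal{A}_\delta)$, and a diagonal Rellich--Kondrachov argument yields a subsequence converging in $L^2(A_1^R\setminus\mathcal{A}_\delta)$ for every $\delta>0$. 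Since $|\mathcal{A}|=0$ by Lemma~\ref{II.meas.}, $|\mathcal{A}_\delta|\to 0$ as $\delta\to 0$, and combining this with the higher local integrability of $u_n$ coming from the Sobolev embedding yields that the contribution of $\mathcal{A}_\delta$ to $\|u_n-u_m\|_{\mathcal{H}_{\varphi_1}}$ tends to zero uniformly in $n,m$.

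The main obstacle is this last step. The weight $|\nabla\varphi_1|^{p-2}$ vanishes on $\mathcal{A}$, so the $\mathcal{D}_{\varphi_1}$-norm gives no direct control of $\nabla u_n$ on a neighbourhood of $\mathcal{A}$, and in contrast to the entire-space situation $\mathcal{A}$ need not be capacity-negligible (as emphasised in the introduction). The argument must therefore lean on the fact that $\mathcal{A}$ has zero Lebesgue measure (Lemma~\ref{II.meas.}) together with a higher-integrability ingredient for $u_n$ to force $\int_{\mathcal{A}_\delta}K\varphi_1^{p-2}u_n^2\,\diff x\to 0$ uniformly in $n$ as $\delta\to 0$.
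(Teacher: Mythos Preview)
Your treatment of the two continuous embeddings via the Picone-type identity is exactly what the paper does, and your tail estimate on $B_R^c$ (comparing $K\varphi_1^{p-2}$ with $|\nabla\varphi_1|^p\varphi_1^{-2}$ through the asymptotics of Proposition~\ref{II.behavior.of.phi_1}) is also the same idea as in the paper's Claim~2.

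The genuine gap is in your handling of the neighbourhood $\mathcal{A}_\delta$ of $\mathcal{A}$, and you have in fact identified it yourself without resolving it. The ``higher local integrability of $u_n$ coming from the Sobolev embedding'' that you invoke does not exist. On $A_1^R\setminus\mathcal{A}_\delta$ you do get an $H^1$-bound, but the constant blows up as $\delta\to 0$ because $\inf_{A_1^R\setminus\mathcal{A}_\delta}|\nabla\varphi_1|\to 0$; so you cannot extract a uniform $L^q$-bound across $\mathcal{A}_\delta$ for any $q>2$. The only a~priori control the $\mathcal{D}_{\varphi_1}$-norm provides near $\mathcal{A}$ is through the weighted $L^2$-spaces $\mathcal{H}_{\varphi_1}$ and $L^2(B_1^c;|\nabla\varphi_1|^p\varphi_1^{-2})$, and the second weight vanishes on $\mathcal{A}$ while the first merely gives uniform $L^2$-boundedness on $\mathcal{A}_\delta$ --- not enough to force $\int_{\mathcal{A}_\delta}K\varphi_1^{p-2}u_n^2\,\diff x\to 0$ uniformly in $n$ from $|\mathcal{A}_\delta|\to 0$ alone.

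The paper closes this gap with a different, sharper estimate. Exploiting that under \textup{(H)} the function $\varphi_1$ is radial and $\mathcal{A}=S_{r_0}$, a one-dimensional integration by parts (using the radial ODE satisfied by $\varphi_1$) yields
\[
\int_{A_R^{R'}}|\nabla\varphi_1|^p\varphi_1^{-2}u^2\,\diff x\;\le\;9\log\!\left(\frac{\varphi_1^2(r_0)}{\varphi_1(R)\varphi_1(R')}\right)\|u\|_{\mathcal{D}_{\varphi_1}}^2,\qquad 1<R<r_0<R',
\]
and the right-hand side tends to $0$ as $R\nearrow r_0$, $R'\searrow r_0$. This is then combined with the product form of the Picone inequality, namely $\int K\varphi_1^{p-2}v^2\le\frac{2}{\lambda_1}\|v\|_{\mathcal{D}_{\varphi_1}}\bigl(\int|\nabla\varphi_1|^p\varphi_1^{-2}v^2\bigr)^{1/2}$, applied to $v=\phi u_n$ with a cutoff $\phi$ supported in $A_R^{R'}$. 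The first factor stays bounded (using Rellich on the outer annuli where $|\nabla\varphi_1|$ is bounded below), while the second is small by the logarithmic estimate. This is the missing ingredient your proposal needs; the zero-measure fact of Lemma~\ref{II.meas.} by itself is not sufficient.
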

	\begin{remark}\rm
		We note that if a measurable weight $K$ satisfies \eqref{II.compare.K}, then it also satisfies the assumption of Lemma~\ref{II.le.compact.imb}. 
		The weight $K$ introduced in Example~\ref{II.Example} does not satisfy \eqref{II.compare.K} but fulfills the assumptions of Lemma~\ref{II.le.compact.imb}. On the other hand, the weight $\widetilde{K}$ introduced in Example~\ref{II.Example} satisfies 
		$\displaystyle\lim_{\rho\to\infty}\underset{r\geq\rho}{\operatorname{ess\ sup}}\ r^p\widetilde{K}(r)=1,$ and hence does not satisfy the assumptions of Lemma~\ref{II.le.compact.imb}.
	\end{remark} 


We now discuss differentiability of functions in $\mathcal{D}_{\varphi_1}$. For an open set $\Omega$ in $\mathbb{R}^N,$ denote by $W^1(\Omega)$ the set of all $u\in L_{loc}^1(\Omega)$ such that weak derivatives $\frac{\partial u}{\partial x_i}\  (i=1,\cdots, N)$ in $\Omega$ exist. Clearly, $X\subset W^1(B_1^c).$ The inclusion $\mathcal{D}_{\varphi_1}\subset W^1(B_1^c)$ in the case $p>2$ is not clear since the weight $|\nabla\varphi_1|^{p-2}$ is degenerate on the set $\{\nabla\varphi_1=0\}$. In the case of problem \eqref{I.eq.entire} in the entire space $\mathbb{R}^N$, the weighted space $\mathcal{D}_{\widetilde{\varphi}_1}$ (the completion of $\mathcal{D}^{1,p}(\mathbb{R}^N)$ with respect to the norm $\|u\|_{\mathcal{D}_{\widetilde{\varphi}_1}}:=\left(\int_{\mathbb{R}^N}|\nabla \widetilde{\varphi}_1|^{p-2}|\nabla u|^2\diff x\right)^{1/2}$) is not contained in $W^1(\mathbb{R}^N)$ in general. This fact can be illustrated in the following example.
\begin{example}\label{Non-weakderivative}\rm
Let $2<p<N,\ \mu>0,$ and $\gamma>\frac{(p-1)(N+2)}{p-2}-1$. Let $\widetilde{\varphi}_1$ be the corresponding positive eigenfunction of $-\Delta_p$ in $\mathbb{R}^N$ relative to the weight
$$m(x):= \begin{cases}
|x|^\gamma,& \ |x|\leq 2,\\
\frac{1}{1+|x|^{p+\mu}}, &\ |x|>2.
\end{cases}$$
Let $\phi\in C^\infty(\mathbb{R}^N)$ such that $0\leq \phi\leq 1,$ $\phi= 1$ in $B_1,$ and $\operatorname{supp}(\phi)\subset B_2.$ Let $\phi_n\in C^\infty(\mathbb{R}^N)$ such that $0\leq \phi_n\leq 1,$ $\phi_n=\phi$ in $|x|\geq \frac{2}{n},$ $\phi_n=0$ in $|x|\leq \frac{1}{n},$ and $|\nabla\phi_n|\leq 2n$ ($n=1,2,\cdots).$ Let $-\frac{(N-2)(p-1)+(\gamma+1)(p-2)}{2(p-1)}$$<\theta\leq -N$ and define $u(x):=|x|^\theta\phi(x),$ $u_n(x):= |x|^\theta\phi_n(x)$ ($n=1,2,\cdots).$ Then $u\not \in L^1_{loc}(\mathbb{R}^N),$ $\{u_n\}\subset C^1_c(\mathbb{R}^N)$ and $\displaystyle\lim_{n\to\infty}\|u_n-u\|_{\mathcal{D}_{\widetilde{\varphi}_1}}=0.$ In other words, we have $u\in\mathcal{D}_{\widetilde{\varphi}_1}\setminus W^1(\mathbb{R}^N).$  
\end{example}
In the case of an exterior domain, we still do not know whether the inclusion $\mathcal{D}_{\varphi_1}\subset W^1(B_1^c)$ is valid when $2<p<N$ and $\textrm{(H)}$ hold. However, that inclusion is guaranteed if we strengthen the assumption on $K$ as in the following lemma.
\begin{lemma}\label{II.le.weak.derivative}
	Assume that $2<p<N$ and that $\textup{(H)}$ holds. Assume in addition that 
	\begin{itemize}
		\item[(W)]  $K^{-1}\in L_{loc}^1(1,\infty)$ and for each $t>1, f(r):=\left|\int_{t}^{r}K(s)ds\right|^{\frac{2-p}{p-1}}\in L_{loc}^1(1,\infty).$
	\end{itemize}
Then, $\mathcal{D}_{\varphi_1}\subset W^1(B_1^c).$	 
\end{lemma}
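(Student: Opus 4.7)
The plan is to identify each element of $\mathcal{D}_{\varphi_1}$ with a function in $L^1_{loc}(B_1^c)$ whose distributional gradient lies in $L^1_{loc}(B_1^c;\mathbb{R}^N)$. Given a Cauchy sequence $\{u_n\}\subset X$ in the norm $\|\cdot\|_{\mathcal{D}_{\varphi_1}}$, the Cauchy--Schwarz inequality on any bounded $\Omega\Subset B_1^c$ yields
$$\int_\Omega|\nabla(u_n-u_m)|\diff x\leq \|u_n-u_m\|_{\mathcal{D}_{\varphi_1}}\left(\int_\Omega|\nabla\varphi_1|^{2-p}\diff x\right)^{1/2},$$
so, once $|\nabla\varphi_1|^{2-p}\in L^1_{loc}(B_1^c)$ is established, $\{\nabla u_n\}$ is Cauchy in $L^1_{loc}(B_1^c;\mathbb{R}^N)$. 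Since each $u_n$ has vanishing trace on $\partial B_1$, the classical Poincar\'e inequality on the annulus $A_1^R$ propagates this to $\{u_n\}$ being Cauchy in $L^1(A_1^R)$ for every $R>1$; passing to the limit in the defining integration-by-parts identity recognises the $L^1_{loc}$-limit $u$ of $u_n$ as a function in $W^1(B_1^c)$ whose weak gradient is the $L^1_{loc}$-limit of $\nabla u_n$. A repetition of the Cauchy--Schwarz estimate shows that this limit is independent of the chosen representative Cauchy sequence, which furnishes the inclusion $\mathcal{D}_{\varphi_1}\subset W^1(B_1^c)$.

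The main difficulty is therefore the verification of $|\nabla\varphi_1|^{2-p}\in L^1_{loc}(B_1^c)$, which is delicate precisely because the degenerate set $\mathcal{A}=\{\nabla\varphi_1=0\}$ is nontrivial in the exterior setting. Under \textup{(H)}, Proposition~\ref{II.behavior.of.phi_1} guarantees radial symmetry, so $|\nabla\varphi_1(x)|=|\varphi_1'(|x|)|$ and the problem reduces to showing $|\varphi_1'(r)|^{2-p}\,r^{N-1}\in L^1_{loc}(1,\infty)$. First I would analyse the radial flux
$$\phi(r):=r^{N-1}|\varphi_1'(r)|^{p-2}\varphi_1'(r),$$
which satisfies $\phi'(r)=-\lambda_1 r^{N-1}K(r)\varphi_1^{p-1}(r)$ by the radial form of \eqref{II.eigen}. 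Since $K>0$ a.e.\ and $\varphi_1>0$, the function $\phi$ is strictly decreasing; the Hopf-type boundary estimate forces $\varphi_1'(1)>0$ and hence $\phi(1)>0$, while \eqref{II.behavior2} yields $\phi(r)\to-\bigl((N-p)C/(p-1)\bigr)^{p-1}<0$ as $r\to\infty$. Consequently $\phi$ (and thus $\varphi_1'$) vanishes at a single point $r_\ast\in(1,\infty)$, i.e.\ $\mathcal{A}$ reduces to the sphere $\{|x|=r_\ast\}$.

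To quantify the rate at which $|\varphi_1'|$ vanishes at $r_\ast$, I would integrate the ODE starting from $r_\ast$: for any compact $[a,b]\subset(1,\infty)$ containing $r_\ast$,
$$r^{N-1}|\varphi_1'(r)|^{p-1}=|\phi(r)|=\lambda_1\left|\int_{r_\ast}^{r}s^{N-1}K(s)\varphi_1^{p-1}(s)\diff s\right|\geq c\left|\int_{r_\ast}^{r}K(s)\diff s\right|,$$
with $c>0$ depending on $[a,b]$ through the uniform positive lower bound of $s^{N-1}\varphi_1^{p-1}(s)$ there. Raising both sides to the negative power $-(p-2)/(p-1)$ and absorbing the bounded factor of $r^{N-1}$ gives
$$|\varphi_1'(r)|^{2-p}\,r^{N-1}\leq C\left|\int_{r_\ast}^{r}K(s)\diff s\right|^{(2-p)/(p-1)}\quad \text{on } [a,b],$$
whose right-hand side is precisely the function $f$ from condition \textup{(W)} taken at $t=r_\ast$, hence in $L^1(a,b)$. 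Away from $r_\ast$, $|\varphi_1'|$ is bounded below by continuity, so the integrand is bounded; combining both regimes and reverting to spherical coordinates delivers $|\nabla\varphi_1|^{2-p}\in L^1_{loc}(B_1^c)$. The auxiliary hypothesis $K^{-1}\in L^1_{loc}(1,\infty)$ enters as a compatibility assumption preventing $\int_{r_\ast}^r K$ from decaying too quickly (so that $f$ is well behaved). The main obstacle of the whole argument is this uniform two-sided quantification of the vanishing of $\varphi_1'$ at the critical sphere, which crucially relies on bridging the pointwise degeneracy of $|\nabla\varphi_1|^{p-2}$ with the integral condition \textup{(W)} on $K$.
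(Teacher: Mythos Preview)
Your argument is correct and shares its core with the paper's proof: both derive the crucial pointwise bound
\[
|\varphi_1'(r)|^{2-p}\le C\left|\int_{r_0}^{r}K(s)\,\diff s\right|^{\frac{2-p}{p-1}}
\]
on compact intervals by integrating the radial ODE from the unique critical radius, and both then use Cauchy--Schwarz with the weight $|\nabla\varphi_1|^{2-p}$ to pass to the limit in the gradient term. The paper proceeds slightly differently in two places. First, rather than proving $|\nabla\varphi_1|^{2-p}\in L^1_{loc}$ and then appealing to Poincar\'e on $A_1^R$, the paper tests directly against $\phi\in C_c^\infty(B_1^c)$ and passes to the limit in $\int u_n\,\partial_i\phi$ and $\int (\partial_i u_n)\,\phi$ separately; since $\operatorname{supp}\phi\subset A_{1+1/n_0}^{n_0}\Subset B_1^c$, no boundary behaviour near $\partial B_1$ is needed. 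Second, and this is the genuine divergence, for the convergence of $u_n$ the paper does \emph{not} use a Poincar\'e inequality: it invokes the already-established embedding $\mathcal{D}_{\varphi_1}\hookrightarrow\mathcal{H}_{\varphi_1}$ together with the hypothesis $K^{-1}\in L^1_{loc}(1,\infty)$ to estimate
\[
\left|\int(u_n-u)\,\partial_i\phi\right|\le\|\nabla\phi\|_\infty\Bigl(\int_{\operatorname{supp}\phi}K^{-1}\varphi_1^{2-p}\Bigr)^{1/2}\Bigl(\int K\varphi_1^{p-2}|u_n-u|^2\Bigr)^{1/2}.
\]
Your Poincar\'e route bypasses this entirely, which is why you could not locate a substantive role for $K^{-1}\in L^1_{loc}$: in your argument it is genuinely unused, whereas in the paper it is exactly what controls the $u_n$-term. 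One small point to tidy in your write-up: you establish $|\nabla\varphi_1|^{2-p}\in L^1_{loc}(B_1^c)$ (compact subsets of the open exterior) but then apply Poincar\'e on the full annulus $A_1^R$, which touches $\partial B_1$; this is harmless because the Hopf estimate you already cite gives $|\nabla\varphi_1|$ bounded below near $\partial B_1$, so in fact $|\nabla\varphi_1|^{2-p}\in L^1(A_1^R)$, but the sentence should say so.
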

The proof is provided in the Appendix~\ref{Appendix.WeakDerivative}.
\begin{example}\rm
	Clearly if the weight $K$ satisfies that $\underset{x\in A_{1+1/n}^n}{\operatorname{ess\ inf}}K(x)>0$ for all $n>2$ then $\textup{(W)}$ is satisfied. If we take $$K_3(r):=\begin{cases}
(2-r)^\eta,\quad r\in [1,2),\\
K_1(r),\quad r\in [2,\infty),
\end{cases}$$ where $K_1$ is defined in Example~\ref{II.Example} with $2<p<N$ and $0<\eta<\min\{1,\frac{1}{p-2}\}$, then the weight $K(x):=K_3(|x|)$ satisfies $\textup{(H)}$ and $\textup{(W)}$ with $\displaystyle\lim_{\rho\to\infty}\underset{r\geq\rho}{\operatorname{ess\ sup}}\ r^pK(r)=0$. On the other hand, $K$ does not satisfy \eqref{II.compare.K} and we have $\underset{r\in [1+1/n,n]}{\operatorname{ess\ inf}}K(r)=0$ for all $n>2.$
\end{example}


In the following remark we discuss the principal differences between the exterior domain case and the entire space case. 
\begin{remark}\label{II.rmk.WeakDerivative}\rm
Let  $2<p<N$ and let $\mathcal{D}_{\varphi_1}$ (resp. $\mathcal{D}_{\widetilde{\varphi}_1}$) be the weighted Sobolev space corresponding to problem \eqref{1.1} (resp.  \eqref{I.eq.entire}) with the radially symmetric and measurable weight $K$ (resp. $m$) satisfying $\textrm{(H)}$ (resp. \eqref{I.weight.RN}). The weight $|\nabla\varphi_1|^{p-2}$ of the space $\mathcal{D}_{\varphi_1}$ is degenerate on a sphere $S_{r_0}$ ($1<r_0<\infty$) whereas the weight $|\nabla\widetilde{\varphi}_1|^{p-2}$ of the space $\mathcal{D}_{\widetilde{\varphi}_1}$ is degenerate at the origin (see the proofs of Proposition~\ref{II.behavior.of.phi_1} and \cite[Proposition 9.1]{Alziary}). Although it is possible that $\mathcal{D}_{\widetilde{\varphi}_1}\not\subset W^1(\mathbb{R}^N)$ (see Example~\ref{Non-weakderivative}), we indeed have $\mathcal{D}_{\widetilde{\varphi}_1}\subset W^1(\mathbb{R}^N\setminus\{0\})$ and arguments on $\mathbb{R}^N\setminus\{0\}$ are basically the same as on $\mathbb{R}^N.$ However, the situation of an exterior domain is very different. We also have $\mathcal{D}_{\varphi_1}\subset W^1(B_1^c\setminus S_{r_0})$ thanks to the embedding $\mathcal{D}_{\varphi_1}\hookrightarrow L^2(B_1^c;|\nabla \varphi_1|^p\varphi_1^{-2})$ and the properties of $\varphi_1$ but we do not know whether $\mathcal{D}_{\varphi_1}\subset W^1(B_1^c)$. Unlike the entire space case, arguments on $B_1^c\setminus S_{r_0}$ are very different from those on $B_1^c$ since $S_{r_0}$ is a nonremovable set. So the assumption $\textrm{(W)}$ is necessary to assure that $\mathcal{D}_{\varphi_1}$ is indeed a weighted Sobolev space when $2<p<N.$ Therefore, the difference between the types of the sets where $\nabla \varphi_1$ and $\nabla \widetilde{\varphi}_1$ degenerate, makes the case of an exterior domain more delicate than the case of the entire space (see the proof in the Appendix~\ref{AppendixB} and \cite[Proof of Proposition 3.6]{Alziary}).  
\end{remark}

	
	The following operator $\mathbb{A}:\mathbb{R}^N \to \mathbb{M}_{N\times N }(\mathbb{R})$ (where $\mathbb{M}_{N\times N }(\mathbb{R}$) denotes the set of $N \times N$ matrices over $\mathbb{R}$), will provide much advantage for us when we apply the second order Taylor formula for energy functional. For $1<p<\infty,$ we define
	$$\mathbb{A}(\mathbf{a}):=|\mathbf{a}|^{p-2}\left(\mathbf{I}+(p-2)\frac{\mathbf{a}\otimes\mathbf{a}}{|\mathbf{a}|^2}\right)\ \text{for}\ \mathbf{a}\in\mathbb{R}^N\setminus\{0\},$$
	where $\mathbf{I}$ is the $N\times N$ identity matrix, $\mathbf{a}\otimes \mathbf{b}:=(a_ib_j)_{N\times N}$ with $\mathbf{a}=(a_1,\cdots,a_N),\mathbf{b}=(b_1,\cdots,b_N)\in\mathbb{R}^N.$
	We define $\mathbb{A}(\mathbf{0}):=\mathbf{0}\in \mathbb{M}_{N\times N }(\mathbb{R}).$ The following basic properties of the operator $\mathbb{A}$ were shown in \cite[Subsection 2.4]{Alziary}.
Let $1<p<\infty$, then for all $\mathbf{a},\mathbf{v}\in \mathbb{R}^N\setminus\{0\},$ we have
	\begin{equation}\label{II.est.A}
	\min\{1,p-1\}\leq \frac{\langle\mathbb{A}(\mathbf{a})\mathbf{v},\mathbf{v}\rangle_{\mathbb{R}^N}}{|\mathbf{a}|^{p-2}|\mathbf{v}|^2}\leq\max\{1,p-1\}.
	\end{equation}
Moreover, for $2\leq p<\infty$ there exists $C_p>0$  such that for all $\mathbf{a},\mathbf{b},\mathbf{v}\in\mathbb{R}^N$, we have
\begin{align*}
C_p\left(\underset{0\leq s\leq 1}{\max}|\mathbf{a}+s\mathbf{b}|\right)^{p-2}|\mathbf{v}|^2\leq \int_{0}^{1}\langle\mathbb{A}(\mathbf{a}&+s\mathbf{b})\mathbf{v},\mathbf{v}\rangle_{\mathbb{R}^N}(1-s)\diff s\notag\\
&\leq\frac{p-1}{2}\left(\underset{0\leq s\leq 1}{\max}|\mathbf{a}+s\mathbf{b}|\right)^{p-2}|\mathbf{v}|^2.         
\end{align*} 
On the other hand, for $1<p<2$ there exists $C_p>0$ such that for all $\mathbf{a},\mathbf{b},\mathbf{v}\in\mathbb{R}^N$ with $|\mathbf{a}|+|\mathbf{b}|>0$ we have
\begin{align}\label{II.est.A.int2}
\frac{p-1}{2}\left(\underset{0\leq s\leq 1}{\max}|\mathbf{a}+s\mathbf{b}|\right)^{p-2}|\mathbf{v}|^2\leq \int_{0}^{1}\langle\mathbb{A}(\mathbf{a}&+s\mathbf{b})\mathbf{v},\mathbf{v}\rangle_{\mathbb{R}^N}(1-s)\diff s\notag\\
&\leq C_p\left(\underset{0\leq s\leq 1}{\max}|\mathbf{a}+s\mathbf{b}|\right)^{p-2}|\mathbf{v}|^2.
\end{align} 
By \eqref{II.est.A}, when 
$p> 2$, we have
\begin{equation}\label{II.est.A-norm}
\|v\|^2_{\mathcal{D}_{\varphi_1}}\leq \int_{B_1^c}\langle\mathbb{A}(\nabla\varphi_1)\nabla v,\nabla v\rangle_{\mathbb{R}^N}\diff x\leq(p-1)\|v\|^2_{\mathcal{D}_{\varphi_1}},\ \forall v\in\mathcal{D}_{\varphi_1}.
\end{equation}

\section{ An Improved Poincar\'e inequality when $2<p<N$}\label{Poincare.Inequality}

In this section, we obtain an improved Poincar\'e inequality on $X$ when $2<p< N,$ by applying the second order Taylor formula for energy functional at $\varphi_1$. 

For functions $\phi,v,w: B_1^c\to\mathbb{R}$, we define 
\begin{align}\label{II.def.Q}
\mathcal{Q}_\phi(v,w):=&\int_{B_1^c}\left\langle\left[\int_{0}^{1}\mathbb{A}(\nabla\varphi_1+s\nabla\phi)(1-s)\diff s\right]\nabla v,\nabla w\right\rangle_{\mathbb{R}^N}\diff x\notag\\
&-\lambda_1(p-1)\int_{B_1^c}K(x)\left[\int_{0}^{1}|\varphi_1+s\phi|^{p-2}(1-s)\diff s\right]vw\diff x,   \notag
\end{align}
and thus
$$\mathcal{Q}_0(v,v)=\frac{1}{2}\int_{B_1^c}\langle\mathbb{A}(\nabla\varphi_1)\nabla v,\nabla v\rangle_{\mathbb{R}^N}\diff x-\frac{1}{2}\lambda_1(p-1)\int_{B_1^c}K(x)\varphi_1^{p-2}v^2\diff x,$$
whenever the integrals are well-defined. Note that when $p\geq 2$, by invoking the Lebesgue dominated convergence theorem, we can show that, the functional  
$$\Phi (u):=\frac{1}{p}\int_{B_1^c}|\nabla u|^p\diff x-\frac{\lambda_1}{p}\int_{B_1^c}K(x)|u|^p\diff x,$$
belongs to $C^2(X,\mathbb{R})$ via standard arguments. Applying the second order Taylor formula for $\Phi$ at $\varphi_1$, we have
$$\Phi(\varphi_1+\phi)= \Phi(\varphi_1)+\langle D\Phi(\varphi_1),\phi\rangle+\int_0^1(1-s)\langle D^2\Phi(\varphi_1+s\phi)\phi,\phi\rangle \diff s, \quad \forall \phi\in X.$$ 
Thus,  
$\Phi(\varphi_1+\phi)=\mathcal{Q}_\phi(\phi,\phi)$ and hence, $\mathcal{Q}_\phi(\phi,\phi)\geq 0$  for all $\phi\in X$ due to variational characterization of the first eigenvalue $\lambda_1.$ 
Clearly, $\mathcal{Q}_0(\varphi_1,\varphi_1)=0$. When  $p> 2$, $\mathcal{Q}_0(\cdot,\cdot)$ is well-defined on $\mathcal{D}_{\varphi_1}$ in view of \eqref{II.est.A-norm} and the embedding $\mathcal{D}_{\varphi_1}\hookrightarrow\mathcal{H}_{\varphi_1}.$ Arguing as in \cite[the inequality $(4.4)$]{Takac2002}, we get  $\mathcal{Q}_0(\phi,\phi)\geq 0$ for all $\phi\in\mathcal{D}_{\varphi_1}.$ So, we obtain
\begin{lemma}\label{nonnegativeness.of.Q0}
	Assume that $p> 2.$ Then, $\mathcal{Q}_0(\varphi_1,\varphi_1)=0$ and $0\leq \mathcal{Q}_0(\phi,\phi)<\infty$ for all $\phi\in\mathcal{D}_{\varphi_1}.$
\end{lemma}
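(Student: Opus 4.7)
The plan is to address the three assertions in sequence, reserving the real work for the non-negativity claim and its extension from $X$ to $\mathcal{D}_{\varphi_1}$.

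\emph{Finiteness and the identity at $\varphi_1$.} For $\phi\in\mathcal{D}_{\varphi_1}$, the upper bound in \eqref{II.est.A-norm} gives $\tfrac{1}{2}\int_{B_1^c}\langle\mathbb{A}(\nabla\varphi_1)\nabla\phi,\nabla\phi\rangle_{\mathbb{R}^N}\diff x\leq \tfrac{p-1}{2}\|\phi\|_{\mathcal{D}_{\varphi_1}}^2$, while the second piece of $\mathcal{Q}_0(\phi,\phi)$ equals $\tfrac{\lambda_1(p-1)}{2}\|\phi\|_{\mathcal{H}_{\varphi_1}}^2$, finite by Lemma~\ref{II.le.compact.imb}. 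For the identity, taking $\mathbf{a}=\mathbf{v}=\nabla\varphi_1$ yields $\langle\mathbb{A}(\nabla\varphi_1)\nabla\varphi_1,\nabla\varphi_1\rangle_{\mathbb{R}^N}=(p-1)|\nabla\varphi_1|^p$, so $\mathcal{Q}_0(\varphi_1,\varphi_1)=\tfrac{p-1}{2}\bigl[\int_{B_1^c}|\nabla\varphi_1|^p\diff x-\lambda_1\int_{B_1^c}K(x)\varphi_1^p\diff x\bigr]=0$, thanks to the eigenvalue identity obtained by testing \eqref{II.eigen} against $\varphi_1$.

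\emph{Non-negativity on $X$.} Since assumption $\textrm{(A)}$ forces $K>0$ a.e., the variational characterization of $\lambda_1$ yields $\Phi\geq 0$ on $X$, and moreover $\Phi(\varphi_1)=0$; hence $\varphi_1$ is a global minimizer of $\Phi$. Combining this with the second-order Taylor expansion already displayed in the text and with the fact that $\langle D\Phi(\varphi_1),\phi\rangle=0$ for every $\phi\in X$ (the weak form of \eqref{II.eigen}), we obtain $\mathcal{Q}_\phi(\phi,\phi)\geq 0$ for all $\phi\in X$. Applying this inequality with $t\phi$ in place of $\phi$ for $t>0$ and dividing by $t^2$ gives $\mathcal{Q}_{t\phi}(\phi,\phi)\geq 0$; I would then let $t\to 0^+$ and invoke Lebesgue's dominated convergence theorem in the definition of $\mathcal{Q}_{t\phi}(\phi,\phi)$ to recover $\mathcal{Q}_0(\phi,\phi)\geq 0$ for $\phi\in X$.

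\emph{Extension to $\mathcal{D}_{\varphi_1}$ and the main difficulty.} I would finish by showing that $\mathcal{Q}_0(\cdot,\cdot)$ is continuous on $\mathcal{D}_{\varphi_1}\times\mathcal{D}_{\varphi_1}$: the first term is controlled via the positive semidefiniteness of $\mathbb{A}(\nabla\varphi_1)$, the Cauchy--Schwarz inequality for the induced form, and \eqref{II.est.A-norm}, while the second is controlled by $\mathcal{D}_{\varphi_1}\hookrightarrow\mathcal{H}_{\varphi_1}$. Since $X$ is dense in $\mathcal{D}_{\varphi_1}$ by construction, the inequality $\mathcal{Q}_0(\phi,\phi)\geq 0$ then extends to all of $\mathcal{D}_{\varphi_1}$. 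The step I expect to require the most care is the dominated convergence argument above: one must produce a uniform-in-$t$ integrable majorant for both $\langle\mathbb{A}(\nabla\varphi_1+st\nabla\phi)\nabla\phi,\nabla\phi\rangle_{\mathbb{R}^N}$ and $K(x)|\varphi_1+st\phi|^{p-2}\phi^2$. For $p>2$, the elementary bound $|\mathbf{a}+\mathbf{b}|^{p-2}\leq 2^{p-3}(|\mathbf{a}|^{p-2}+|\mathbf{b}|^{p-2})$ reduces the first to integrability of $|\nabla\varphi_1|^{p-2}|\nabla\phi|^2$ (finite since $\phi\in X\hookrightarrow\mathcal{D}_{\varphi_1}$ by Lemma~\ref{II.le.imb}) and of $|\nabla\phi|^p$ (finite since $\phi\in X$); an analogous splitting for the $K$-weighted integrand, handled by $\mathcal{D}_{\varphi_1}\hookrightarrow\mathcal{H}_{\varphi_1}$ together with the admissibility of $K$, completes the estimate.
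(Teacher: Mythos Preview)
Your proposal is correct and follows essentially the same route as the paper: finiteness via \eqref{II.est.A-norm} and the embedding $\mathcal{D}_{\varphi_1}\hookrightarrow\mathcal{H}_{\varphi_1}$, the identity $\mathcal{Q}_0(\varphi_1,\varphi_1)=0$ by direct computation, nonnegativity on $X$ from the variational characterization of $\lambda_1$, and extension to $\mathcal{D}_{\varphi_1}$ by density and continuity of the bilinear form (this is precisely the content of the reference to \cite[inequality~(4.4)]{Takac2002}).

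Two minor remarks. First, your intermediate passage through $\mathcal{Q}_{t\phi}(\phi,\phi)\ge 0$ and the limit $t\to 0^+$ is unnecessary: since $\Phi\in C^2(X,\mathbb{R})$ and $\varphi_1$ is a global minimizer, the second variation $\langle D^2\Phi(\varphi_1)\phi,\phi\rangle=2\mathcal{Q}_0(\phi,\phi)$ is automatically nonnegative for every $\phi\in X$, so the dominated-convergence step can be skipped entirely. Second, the inequality $|\mathbf{a}+\mathbf{b}|^{p-2}\le 2^{p-3}(|\mathbf{a}|^{p-2}+|\mathbf{b}|^{p-2})$ is false for $2<p<3$ (take $\mathbf{b}=0$); the correct elementary bound uses the constant $\max\{1,2^{p-3}\}$, which does not affect the conclusion.
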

By Lemma~\ref{nonnegativeness.of.Q0} we have another formula for the first eigenvalue
\begin{equation}\label{2nd.form.of.lamda1}
\lambda_1=\inf\left\{\frac{\int_{B_1^c}\langle\mathbb{A}(\nabla\varphi_1)\nabla u,\nabla u\rangle_{\mathbb{R}^N}\diff x}{(p-1)\int_{B_1^c}K(x)\varphi_1^{p-2}u^2\diff x}:\ u\in\mathcal{D}_{\varphi_1}\setminus\{0\}\right\},
\end{equation}
and $\varphi_1$ is a minimizer for $\lambda_1$ in \eqref{2nd.form.of.lamda1}. Clearly, $u$ is a minimizer for $\lambda_1$ in \eqref{2nd.form.of.lamda1} if and only if $u\in\mathcal{D}_{\varphi_1}\setminus\{0\}$ and $\mathcal{Q}_0(u,u)=0.$ This is equivalent to $u\in\mathcal{D}_{\varphi_1}\setminus\{0\}$ and $\mathcal{Q}_0(u,v)=0$ for all $v\in\mathcal{D}_{\varphi_1}$ since $\mathcal{Q}_0(\cdot,\cdot)$ is a nonnegative symmetric bilinear form on $\mathcal{D}_{\varphi_1}.$ Hence, if $\mathcal{D}_{\varphi_1}\subset W^1(B_1^c)$ then  $u\in\mathcal{D}_{\varphi_1}\setminus\{0\}$ satisfies $\mathcal{Q}_0(u,u)=0$ if and only if $u\in\mathcal{D}_{\varphi_1}$ is nontrivial weak solution in $\mathcal{D}_{\varphi_1}$ to problem
\begin{equation}\label{II.simplicity.linear}
-\nabla\cdot(\mathbb{A}(\nabla\varphi_1)\nabla u)=\lambda_1(p-1)K\varphi_1^{p-2}u\ \ \text{in}\ B_1^c.
\end{equation}
In other words, $u$ is an eigenfunction associated with the first eigenvalue $\lambda_1$ of \eqref{II.simplicity.linear}. 
The following result shows that $\lambda_1$ is in fact a simple eigenvalue of \eqref{II.simplicity.linear} when $2<p<N.$

\begin{proposition}\label{II.prop.simplicity}
	Let $2<p<N$, $\textup{(H)}$ and $\textup{(W)}$ hold. Then a function $u\in\mathcal{D}_{\varphi_1}$ satisfies $\mathcal{Q}_0(u,u)=0$ if and only if $u=k\varphi_1$ for some constant $k\in \mathbb{R}.$
\end{proposition}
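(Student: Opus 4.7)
The converse (if $u=k\varphi_1$ then $\mathcal{Q}_0(u,u)=0$) is immediate since $\mathcal{Q}_0$ is quadratic and $\mathcal{Q}_0(\varphi_1,\varphi_1)=0$, so I focus on the nontrivial direction. The heart of the argument is a Picone-type identity for the linearised bilinear form: setting $v:=u/\varphi_1$, the identity
\begin{equation*}
2\mathcal{Q}_0(u,u)=\int_{B_1^c}\varphi_1^{2}\,\langle\mathbb{A}(\nabla\varphi_1)\nabla v,\nabla v\rangle_{\mathbb{R}^N}\diff x
\end{equation*}
should hold for every $u\in\mathcal{D}_{\varphi_1}$. Assuming it, the conclusion follows quickly: if $\mathcal{Q}_0(u,u)=0$ then the right-hand side vanishes, and the pointwise bound \eqref{II.est.A} forces $|\nabla\varphi_1|^{p-2}|\nabla v|^2=0$ a.e., so $\nabla v=0$ a.e. on $B_1^{c}\setminus\mathcal{A}$; Lemma~\ref{II.meas.} gives $|\mathcal{A}|=0$, so $\nabla v=0$ a.e. in $B_1^{c}$. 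By $\textup{(W)}$ and Lemma~\ref{II.le.weak.derivative} we have $u\in W^{1}(B_1^c)$, and since $\varphi_1\in C^1(B_1^c)$ is strictly positive, $v\in W^{1,1}_{\mathrm{loc}}(B_1^c)$ with $\nabla v=(\varphi_1\nabla u-u\nabla\varphi_1)/\varphi_1^{2}$; as $B_1^{c}$ is connected, $v$ must be constant, i.e., $u=k\varphi_1$.

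To establish the Picone-type identity I would first verify it pointwise/smoothly for $u\in C_c^{1}(B_1^c)$. The key algebraic input is $\mathbb{A}(\nabla\varphi_1)\nabla\varphi_1=(p-1)|\nabla\varphi_1|^{p-2}\nabla\varphi_1$, together with the expansion of $\langle\mathbb{A}(\nabla\varphi_1)\nabla u,\nabla u\rangle_{\mathbb{R}^N}$ under $\nabla u=v\nabla\varphi_1+\varphi_1\nabla v$, which yields
\begin{equation*}
\langle\mathbb{A}(\nabla\varphi_1)\nabla u,\nabla u\rangle_{\mathbb{R}^N}=(p-1)v^{2}|\nabla\varphi_1|^{p}+2(p-1)v\varphi_1|\nabla\varphi_1|^{p-2}\nabla\varphi_1\cdot\nabla v+\varphi_1^{2}\langle\mathbb{A}(\nabla\varphi_1)\nabla v,\nabla v\rangle_{\mathbb{R}^N}.
\end{equation*}
Using $v^{2}\varphi_1=u^{2}/\varphi_1\in X$ as a test function in the equation for $\varphi_1$ (legitimate for smooth compactly supported $u$ since $\varphi_1$ is $C^1$ and bounded away from $0$ on the support of $u$), one obtains
\begin{equation*}
(p-1)\!\int_{B_1^c}\! v^{2}|\nabla\varphi_1|^{p}\diff x+2(p-1)\!\int_{B_1^c}\! v\varphi_1|\nabla\varphi_1|^{p-2}\nabla\varphi_1\cdot\nabla v\diff x=\lambda_1(p-1)\!\int_{B_1^c}\! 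K\varphi_1^{p-2}u^{2}\diff x,
\end{equation*}
and subtracting from the integrated identity above gives the claimed formula for smooth $u$.

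To extend to general $u\in\mathcal{D}_{\varphi_1}$, I would pick $u_n\in C_c^{1}(B_1^c)$ with $u_n\to u$ in $\mathcal{D}_{\varphi_1}$; the left-hand side converges to $2\mathcal{Q}_0(u,u)$ by continuity of $\mathcal{Q}_0$ (which extends boundedly to $\mathcal{D}_{\varphi_1}\times\mathcal{D}_{\varphi_1}$ by \eqref{II.est.A-norm} and Lemma~\ref{II.le.compact.imb}). Passing to a subsequence, $\nabla u_n\to\nabla u$ a.e.\ on $B_1^{c}\setminus\mathcal{A}$ (from convergence in the weighted $L^2$ with weight $|\nabla\varphi_1|^{p-2}$), and then $\nabla v_n\to\nabla v$ a.e., so Fatou's lemma yields
\begin{equation*}
\int_{B_1^c}\varphi_1^{2}\langle\mathbb{A}(\nabla\varphi_1)\nabla v,\nabla v\rangle_{\mathbb{R}^N}\diff x\leq 2\mathcal{Q}_0(u,u),
\end{equation*}
which is all one needs to run the concluding argument above.

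The main obstacle I anticipate is the rigorous passage to the limit in the Picone-type identity: $u/\varphi_1$ need not even lie in $L^{1}_{\mathrm{loc}}(B_1^{c})$ without $\textup{(W)}$, and the test function $u^{2}/\varphi_1$ used in the eigenvalue equation for $\varphi_1$ must be approximated carefully. Hypothesis $\textup{(W)}$, via Lemma~\ref{II.le.weak.derivative}, is exactly what secures the existence of weak derivatives for $v=u/\varphi_1$ on the whole of $B_1^{c}$ (crucially across the degeneracy sphere $S_{r_0}\subset\mathcal{A}$, which is \emph{not} removable, cf.\ Remark~\ref{II.rmk.WeakDerivative}) and hence the final connectedness/constancy step — without it one could only conclude that $v$ is constant on each connected component of $B_1^{c}\setminus\mathcal{A}$.
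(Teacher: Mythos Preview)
Your argument is correct and proceeds along a genuinely different line from the paper's. The paper does \emph{not} use a Picone-type identity. Instead it argues as follows: any minimizer $u$ of \eqref{2nd.form.of.lamda1} gives rise to the family $v_t:=u-t\varphi_1$ with $\mathcal{Q}_0(v_t,v_t)=0$, hence $v_t^{\pm}$ are nonnegative weak supersolutions of the linearised equation \eqref{II.simplicity.linear}. Interior regularity (via \cite{Gilbarg} and \cite{DiBenedetto}) and the V\'azquez strong maximum principle are then applied on each annulus $A_{1}^{r_0-1/n}$, $A_{r_0+1/n}^{n}$ to force $v_t^{\pm}$ to be either identically zero or strictly positive on each of $A_1^{r_0}$ and $B_{r_0}^{c}$. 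Choosing $t_1,t_2$ so that $v_{t_1}$ vanishes at a point of $A_1^{r_0}$ and $v_{t_2}$ at a point of $B_{r_0}^{c}$, a case analysis shows that either $u=t_1\varphi_1$ globally, or else $u=t_1\varphi_1$ on $A_1^{r_0}$ and $u=t_2\varphi_1$ on $B_{r_0}^{c}$ with $t_1\neq t_2$; the latter is ruled out because, under $\textup{(W)}$, Lemma~\ref{II.le.weak.derivative} gives $u\in W^{1,\infty}_{\mathrm{loc}}$, hence Lipschitz across $S_{r_0}$.

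Your route is considerably more direct: it avoids the regularity theory and the strong maximum principle entirely, replacing them by the single algebraic identity $2\mathcal{Q}_0(u,u)=\int\varphi_1^{2}\langle\mathbb{A}(\nabla\varphi_1)\nabla(u/\varphi_1),\nabla(u/\varphi_1)\rangle$ (valid for $u\in C_c^1$) together with Fatou. Both approaches use $\textup{(W)}$ at exactly the same place --- to pass information across the degeneracy sphere $S_{r_0}$ (in the paper, to contradict a jump of the piecewise form; in your argument, to ensure $u/\varphi_1\in W^{1,1}_{\mathrm{loc}}(B_1^c)$ so that $\nabla(u/\varphi_1)=0$ a.e.\ forces constancy on the connected set $B_1^{c}$). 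One small point you glossed over: to get $\nabla v_n\to\nabla v$ a.e.\ you also need $u_n\to u$ a.e., which follows from the embedding $\mathcal{D}_{\varphi_1}\hookrightarrow\mathcal{H}_{\varphi_1}$ (Lemma~\ref{II.le.compact.imb}) along a further subsequence; this is harmless.
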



\begin{remark}\rm\label{II.rmk.proof_of_simplicity}
	The simplicity of the first eigenvalue $\lambda_1$ of degenerated linear problem~\eqref{II.simplicity.linear} is a by-product of our work which is of independent interest. The analogue for the entire space case is dealt with in \cite[Proposition 5.2 and its proof]{Alziary}. Our case, which is more delicate due to the arguments presented in Remark~\ref{II.rmk.WeakDerivative}, is proved in detail in the Appendix~\ref{AppendixC}.
\end{remark}
We close this section with the following \textit{improved Poincar\'e inequality} on $X$ when $2<p<N$. The proof 
is almost similar to that of a bounded domain case \cite[Theorem 1.1]{Fleckinger-Takac} and the entire space case \cite[Lemma 3.7]{Alziary}. It has not escaped our notice that no restriction either on $K$ or $N$ is required for the linear case $p=2$, which we include for completeness. 
\begin{proposition}\label{II.Prop.Poincare}
	\begin{itemize}
		\item[(i)]
	Let $p=2$. Then there exists $C=C(K)>0$ such that 
	\begin{align}\label{IV.Poincarep=2}
	\int_{B_1^c}|\tau\nabla \varphi_1+\nabla u^{\bot}|^2\diff x-\lambda_1\int_{B_1^c}K(x)|\tau\varphi_1+u^{\bot}|^2\diff x
	\geq C \int_{B_1^c}|\nabla u^{\bot}|^2\diff x,
	\end{align}
	holds for all $\tau\in\mathbb{R}$ and $u^{\bot} \in X^{\bot}.$
	\item[(ii)] Let $2<p<N$, $\textup{(H)}$, $\textup{(W)}$ and $\displaystyle \lim_{\rho\to\infty}\underset{r\geq\rho}{\operatorname{ess\ sup}}\ r^pK(r)=0$ hold. Then there exists $C=C(p,K)>0$ such that
		\begin{align}\label{IV.Poincare}
	\int_{B_1^c}|\tau\nabla \varphi_1+\nabla u^{\bot}|^p\diff x&-\lambda_1\int_{B_1^c}K(x)|\tau\varphi_1+u^{\bot}|^p\diff x\notag\\
	&\geq C\left(|\tau|^{p-2}\int_{B_1^c}|\nabla \varphi_1|^{p-2}|\nabla u^{\bot}|^2\diff x+\int_{B_1^c}|\nabla u^{\bot}|^p\diff x\right),
	\end{align}
	holds for all $\tau\in\mathbb{R}$ and $u^{\bot} \in X^{\bot}.$ 
\end{itemize}
\end{proposition}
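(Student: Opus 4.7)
I plan to prove (i) and (ii) by parallel contradiction arguments: in both cases a compactness device forces any putative minimizing sequence of the associated Rayleigh-type quotient to converge to a function in the kernel of the corresponding form, which simplicity of $\lambda_1$ identifies with $\mathbb{R}\varphi_1$; the orthogonality constraint $u^\bot \in X^\bot$ then produces the contradiction.

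\textbf{Part (i).} Testing the eigenvalue equation for $\varphi_1$ against $u^\bot$ and using $\int_{B_1^c}K\varphi_1 u^\bot\,dx = 0$ wipes out all $\tau$ cross terms, so the left hand side of \eqref{IV.Poincarep=2} collapses to
$$\int_{B_1^c}|\nabla u^\bot|^2\,dx - \lambda_1\int_{B_1^c}K|u^\bot|^2\,dx.$$
Arguing by contradiction, I normalize a sequence $u_n^\bot \in X^\bot$ with $\|u_n^\bot\|=1$ and $\int K|u_n^\bot|^2 \to 1/\lambda_1$. The compact embedding $X \hookrightarrow\hookrightarrow L^2(B_1^c;w)$ combined with $|K|\le w$ yields a weak limit $u \in X^\bot$ with $\int K|u|^2 = 1/\lambda_1$ and, by lower semicontinuity, $\int |\nabla u|^2 \leq \lambda_1 \int K|u|^2$, so $u$ attains the infimum defining $\lambda_1$. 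Simplicity then forces $u \in \mathbb{R}\varphi_1$; but $u \in X^\bot$ gives $u=0$, contradicting $\int K|u|^2 > 0$.

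\textbf{Part (ii).} Since $p \geq 2$, $\Phi \in C^2(X,\mathbb{R})$ and the second order Taylor formula at $\tau\varphi_1$ gives
$$\Phi(\tau\varphi_1+u^\bot) = \int_0^1(1-s)\langle D^2\Phi(\tau\varphi_1+su^\bot)u^\bot, u^\bot\rangle\,ds,$$
because $\Phi(\tau\varphi_1)=0$ and $\langle D\Phi(\tau\varphi_1),u^\bot\rangle = 0$ by the eigenvalue equation. Noting that the left hand side of \eqref{IV.Poincare} equals $p\,\Phi(\tau\varphi_1+u^\bot)$, the task reduces to bounding this Taylor remainder from below by $C(|\tau|^{p-2}\|u^\bot\|_{\mathcal{D}_{\varphi_1}}^2 + \|u^\bot\|^p)$. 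Both sides are positively $p$-homogeneous in $(\tau,u^\bot)$, so I normalize $|\tau_n|^{p-2}\|u_n^\bot\|_{\mathcal{D}_{\varphi_1}}^2 + \|u_n^\bot\|^p = 1$, assume the remainder tends to $0$, and split into three regimes: $\tau_n\to 0$, $|\tau_n|\to\infty$, and $|\tau_n|$ bounded between positive constants. In the first, the compact embedding $X\hookrightarrow\hookrightarrow L^p(B_1^c;w)$ combined with $|K|\leq w$ reduces matters to a $p$-analogue of Part (i). In the second, rescaling $v_n = u_n^\bot/\tau_n$ puts the problem under the quadratic form $\mathcal{Q}_0$ via the lower bound \eqref{II.est.A.int2}: the $\mathcal{D}_{\varphi_1}$-weak limit $v$ must satisfy $\mathcal{Q}_0(v,v)=0$, whence Proposition \ref{II.prop.simplicity} gives $v\in\mathbb{R}\varphi_1$, contradicted by orthogonality. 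The intermediate regime subdivides further according to whether $\|u_n^\bot\|_{\mathcal{D}_{\varphi_1}}$ stays bounded away from $0$: the bounded sub-case uses the compact embedding $\mathcal{D}_{\varphi_1}\hookrightarrow\hookrightarrow\mathcal{H}_{\varphi_1}$ of Lemma \ref{II.le.compact.imb} and Proposition \ref{II.prop.simplicity}, while the degenerate sub-case must extract the missing coercivity from the $\|u^\bot\|^p$ term alone.

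The main obstacle is that degenerate sub-case, in which $u_n^\bot$ concentrates on the set $\mathcal{A}$ (in particular on the sphere $S_{r_0}$ where $\nabla\varphi_1$ vanishes): the two summands on the right hand side of \eqref{IV.Poincare} then scale incompatibly, and extracting the missing coercivity requires both the hypothesis \textup{(W)}, so that Lemma \ref{II.le.weak.derivative} identifies $\mathcal{D}_{\varphi_1}$ with a genuine weighted Sobolev space on all of $B_1^c$ and not merely on $B_1^c\setminus S_{r_0}$, and the decay condition $\lim_{\rho\to\infty}\operatorname{ess\,sup}_{r\geq\rho}r^pK(r)=0$, which restores the compactness needed to apply Proposition \ref{II.prop.simplicity} in the limit.
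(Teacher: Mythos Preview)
Your plan is sound and lands on the same conclusions, but part (i) is argued differently from the paper. The paper simply invokes the variational characterization of the second eigenvalue
\[
\lambda_2=\inf\Bigl\{\|u\|^2:\ u\in X^\bot,\ \textstyle\int_{B_1^c}K|u|^2\,dx=1\Bigr\}>\lambda_1,
\]
and reads off $C=(\lambda_2-\lambda_1)/\lambda_2$ in one line. Your contradiction--compactness argument is correct (the reduction of the left side to $\|u^\bot\|^2-\lambda_1\int K|u^\bot|^2$ is exactly right, and the compact embedding $X\hookrightarrow\hookrightarrow L^2(B_1^c;w)$ together with weak closedness of $X^\bot$ closes it), but it is in effect a re-derivation of the spectral gap $\lambda_2>\lambda_1$; the paper's route is shorter because it quotes that gap directly.

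For part (ii) the paper does not give a proof at all: it declares the argument ``almost identical'' to \cite[Lemma~3.7]{Alziary} (cf.\ also \cite{Fleckinger-Takac}) and omits it. Your homogeneity-plus-contradiction scheme, splitting on the size of $\tau_n$ and feeding the limit into Proposition~\ref{II.prop.simplicity} via the compact embedding $\mathcal{D}_{\varphi_1}\hookrightarrow\hookrightarrow\mathcal{H}_{\varphi_1}$ of Lemma~\ref{II.le.compact.imb}, is precisely the template of those references, so your outline matches what the paper has in mind. Two small corrections: the pointwise lower bound you want in the regime $|\tau_n|\to\infty$ is the $p\ge 2$ estimate displayed just \emph{before} \eqref{II.est.A.int2}, not \eqref{II.est.A.int2} itself (that one is the $1<p<2$ inequality); and in the ``degenerate sub-case'' it is cleanest to observe that $\|u_n^\bot\|_{\mathcal{D}_{\varphi_1}}\le C\|u_n^\bot\|$ (Lemma~\ref{II.le.imb}) already keeps both terms of the normalization comparable once $|\tau_n|$ is bounded, so no genuinely separate treatment is needed there.
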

\begin{proof}
To prove part (i), i.e., the linear case $p=2$, we use the variational characterization of the second eigenvalue  
	\begin{equation*}
		\lambda_2:=\inf\left\{\int_{B_1^c}|\nabla u|^2\diff x: u\in X^\bot, \int_{B_1^c} K(x)|u|^2\diff x=1 \right\}>\lambda_1,
		\end{equation*}
to obtain \eqref{IV.Poincarep=2} with $C=\frac{\lambda_2-\lambda_1}{\lambda_2}.$
In order to prove part (ii), we can use 
the embeddings of $\mathcal{D}_{\varphi_1}$ and the properties of $\mathcal{Q}_\phi(\cdot,\cdot)$, whenever the assumptions are satisfied. Since the proof is almost identical to that of \cite[Lemma 3.7]{Alziary}, we omit it.

\end{proof}


\section{A saddle point geometry when $1<p<2$}\label{Saddle.point.property}
Let us consider the energy functional associated with problem \eqref{1.1} with $\lambda=\lambda_1$ (resonant case), 
\begin{equation}
 J_h(u)=\frac{1}{p}\int_{B_1^c}|\nabla u|^p\diff x-\frac{\lambda_1}{p}\int_{B_1^c}K(x)|u|^p\diff x-\langle h,u\rangle.     \label{energyfnl}   \notag
\end{equation}

The following notion introduced in \cite{Dra-Hol} will play an important role.
\begin{definition}\rm\label{III.def.saddle_point_geometry}
	We say that $J_h:X \to \mathbb{R}$ has a \emph{saddle point geometry}, if there exist $u,v\in X$, such that
	$$\int_{B_1^c}K(x)\varphi_1^{p-1}u\diff x<0<\int_{B_1^c}K(x)\varphi_1^{p-1}v\diff x\quad\ \text{and}$$
	$$\max\{J_h(u), J_h(v)\}<\underset{w\in X^{\bot}}{\inf}J_h(w).$$
\end{definition}
In a ball or in the entire space case, a saddle point geometry for the energy functional occurs, when the source term $h$ satisfies $h\not\equiv0$ and $\langle h,\varphi_1\rangle=0$. 
 The authors in \cite{Dra-Hol, Alziary} used the second order Taylor formula for the energy functional at $\varphi_1$, to prove this fact. Likewise we expect, there is a  $\phi$ satisfying the condition
\begin{itemize} 
	\item[$\mathrm{(P_h)}$] $\phi\in C_c^1(B_1^c)$, $\phi$ is constant on a neighbourhood of 
	$\mathcal{A}=\{x\in B_1^c: \nabla\varphi_1(x)=0\}$
	and satisfies $\langle h,\phi\rangle\ne 0$.
\end{itemize}  
However, unlike a ball or the entire space case, in the exterior domain case there exists $h \in X^\ast \setminus \{0\}$, such that $\langle h,\varphi_1\rangle =0$ and there is no $\phi$ satisfying $\mathrm{(P_h)}$, even if $K$ is of a special form. This interesting fact is stated in the following result.


\begin{proposition}\label{III.prop.h}
	Let $1<p<N$  and $\textup{(H)}$ be satisfied. Then there is an $h\in X^\ast\setminus\{0\}$ such that $\langle h,\varphi_1\rangle= 0$ and $h\equiv 0$ on the set
	$$Y:=\{\phi \in C_c^1(B_1^c): \phi \ \text{is constant on some neighbourhood of}\  \mathcal{A}\}.$$
\end{proposition}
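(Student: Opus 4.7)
Under $(\textrm{H})$, Proposition~\ref{II.behavior.of.phi_1} tells us that $\varphi_1$ is radial with $\varphi_1(r)\to 0$ as $r\to\infty$, while $\varphi_1>0$ in $B_1^c$ and $\varphi_1\big|_{\partial B_1}=0$. Continuity of this radial profile, together with the two-sided vanishing, forces a maximum at some $r_0\in(1,\infty)$, so $\varphi_1'(r_0)=0$ and hence the entire sphere $S_{r_0}$ sits inside $\mathcal{A}$. This is the crucial structural feature separating our setting from the entire-space one, where the analogous critical set reduces to a single point of zero $p$-capacity; it is also what will allow a nontrivial $h$ to exist in the first place.

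\medskip

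\noindent The strategy is to build $h$ as a ``tangential'' linear functional living on the trace over $S_{r_0}$. First I would verify that the trace map $T\colon X\to L^p(S_{r_0})$, $u\mapsto u\big|_{S_{r_0}}$, is well defined and continuous: indeed $X\hookrightarrow W^{1,p}_{\textup{loc}}(B_1^c)$ by the embedding recalled in Section~\ref{Sec.Preliminaries}, the Poincar\'e inequality on the bounded annulus $A_1^{r_0+1}$ applies because $u$ vanishes on $\partial B_1$, and the standard trace theorem on the smooth closed hypersurface $S_{r_0}$ does the rest. Since $N\geq 2$, one can pick $g\in C^\infty(S_{r_0})\setminus\{0\}$ with $\int_{S_{r_0}} g\diff S=0$ (for example $g(x)=x_1\big|_{S_{r_0}}$, or any spherical harmonic of positive degree), and I would then define
$$\langle h,u\rangle:=\int_{S_{r_0}}(Tu)(x)\,g(x)\diff S(x),\quad u\in X,$$
which belongs to $X^\ast$ by H\"older's inequality and the boundedness of $T$.

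\medskip

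\noindent The three remaining claims are then immediate. First, $T\varphi_1\equiv\varphi_1(r_0)$ is constant on $S_{r_0}$, so $\langle h,\varphi_1\rangle=\varphi_1(r_0)\int_{S_{r_0}} g\diff S=0$. Second, any $\phi\in Y$ is constant, equal to some $c_\phi$, on a neighborhood of $\mathcal{A}\supset S_{r_0}$, so $T\phi\equiv c_\phi$ and $\langle h,\phi\rangle=c_\phi\int_{S_{r_0}} g\diff S=0$. Third, to check $h\not\equiv 0$, I would produce a test function with prescribed trace $g$: for instance $u(x):=\widetilde g(x/|x|)\,\eta(|x|)$, where $\widetilde g(\sigma):=g(r_0\sigma)$ and $\eta\in C_c^\infty((1,\infty))$ satisfies $\eta(r_0)=1$; then $u\in C_c^1(B_1^c)\subset X$, $Tu=g$, and $\langle h,u\rangle=\|g\|_{L^2(S_{r_0})}^2>0$.

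\medskip

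\noindent The hard part is not in the functional analysis but in the very first step: recognizing that $\mathcal{A}$ really does contain a full $(N-1)$-dimensional sphere. This is the concrete form of the ``non-removability'' highlighted in Remark~\ref{II.rmk.WeakDerivative} — $S_{r_0}$ has positive $p$-capacity and supports a continuous trace on $X$, which is exactly what the entire-space analogue $\{0\}$ does not. Once this geometric observation is secured, the construction uses nothing more than the trace theorem and a single mean-zero test function on the sphere.
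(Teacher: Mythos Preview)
Your argument is correct and takes a genuinely different route from the paper's proof. The paper proceeds abstractly: it first shows that $Y$ is not dense in $X$ via a capacity argument (Lemma~\ref{III.le.dense}), then shows $\varphi_1\in\overline{Y}$ by an explicit three-step cutoff construction (Lemma~\ref{III.le.phi1.in.closure.of.Y}), and finally invokes the Hahn--Banach theorem to extend a linear functional from $\operatorname{span}\{\phi_0\}\oplus\overline{Y}$ (with $\phi_0\notin\overline{Y}$) to all of $X$, obtaining $h$ that vanishes on $\overline{Y}\ni\varphi_1$ but not on $\phi_0$.

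Your construction, by contrast, writes $h$ down explicitly as a trace pairing $\langle h,u\rangle=\int_{S_{r_0}}(Tu)\,g\,\diff S$ against a mean-zero $g$ on the critical sphere. This is shorter and more concrete: it bypasses both the capacity contradiction and the Hahn--Banach extension, and it exhibits $h$ rather than merely asserting its existence. The price is that your argument leans on the specific radial structure available under \textup{(H)} (namely $\mathcal{A}=S_{r_0}$ and a well-defined $L^p$ trace on that sphere), whereas the paper's Hahn--Banach framework separates the functional-analytic skeleton from the geometric input and yields the two auxiliary lemmas as by-products of independent interest. Both approaches ultimately rest on the same phenomenon you correctly identify: the positive $p$-capacity of $S_{r_0}$ is what makes a nontrivial $h$ possible here but not in the entire-space case.
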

Note that for $K$ satisfying $\textrm{(H)}$, we have $\mathcal{A}=S_{r_0}$ for some $r_0>1.$ In order to prove Proposition~\ref{III.prop.h}, we first use the positivity of the capacity of 
 $\Gamma:=S_{r_0}\cap B_{\mathbb{R}^N}(x_0,r)$, for some $x_0\in S_{r_0}$ and $0<r<2r_0$ to show that


\begin{lemma}\label{III.le.dense}
	Under the assumption of Proposition~\ref{III.prop.h}, the set $Y$ is not dense in $X$.
\end{lemma}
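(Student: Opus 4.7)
The plan is to use a trace argument at the sphere $\mathcal{A}$. Under hypothesis $\textup{(H)}$, Proposition~\ref{II.behavior.of.phi_1} gives that $\varphi_1$ is radial, and (as noted in the sentence preceding the lemma) therefore $\mathcal{A}=S_{r_0}$ for some $r_0>1$. The key observation is that any $\phi\in Y$ is a single constant on a neighborhood of the connected hypersurface $S_{r_0}$, whereas a generic $X$-function can have a non-constant trace on $S_{r_0}$; the positivity of the $p$-capacity of a relatively open piece of $S_{r_0}$ (which manifests concretely as the classical trace inequality used below) will prevent approximation.

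First I would fix $x_0\in S_{r_0}$ and pick $r>0$ small enough that the Euclidean ball $U:=B_{\mathbb{R}^N}(x_0,r)$ satisfies $\overline{U}\subset B_1^c$ and $U\setminus S_{r_0}$ splits into two smooth subdomains $U^{+},U^{-}$, each having $\Gamma:=U\cap S_{r_0}$ as a relatively open piece of its Lipschitz boundary. Next I would construct an explicit $u\in C_c^1(U)\subset X$ whose trace on $\Gamma$ is non-constant: using a $C^\infty$ parametrization of $S_{r_0}$ near $x_0$, take $u$ to be a smooth cutoff supported in $U$ multiplied by a tangential coordinate, so that $u|_{\Gamma}$ varies nontrivially.

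To obtain a contradiction, suppose $\phi_n\to u$ in $X$ with $\phi_n\in Y$. Each $\phi_n$ is equal to some constant $c_n\in\mathbb{R}$ on an open neighborhood of $\mathcal{A}=S_{r_0}$, hence $\phi_n\equiv c_n$ on a neighborhood of $\Gamma$ inside $U$. By the continuous embedding $X\hookrightarrow W^{1,p}_{\mathrm{loc}}(B_1^c)$ (Lemma (ii) of Section~\ref{Sec.Preliminaries}), we have $\phi_n\to u$ in $W^{1,p}(U^+)$, and the standard trace theorem for the Lipschitz domain $U^+$ yields $\phi_n|_\Gamma\to u|_\Gamma$ in $L^p(\Gamma)$. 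Since $\phi_n|_\Gamma\equiv c_n$, the sequence $\{c_n\}$ is Cauchy in $\mathbb{R}$, and its limit $c_\infty$ satisfies $u|_\Gamma=c_\infty$ a.e.\ on $\Gamma$, contradicting the non-constancy of $u|_\Gamma$.

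The main technical point is setting up the trace at $\Gamma$ cleanly: because $\Gamma$ is a piece of a smooth hypersurface bounding the nice $C^\infty$ subdomain $U^+$, the trace inequality $\|v|_\Gamma\|_{L^p(\Gamma)}\lesssim \|v\|_{W^{1,p}(U^+)}$ applies uniformly to the $\phi_n$ and $u$. This is exactly where the positive $p$-capacity of $\Gamma$ enters; it is also what makes the exterior-domain setting essentially different from the entire-space situation in which $\mathcal{A}$ can be a set of zero capacity (cf.\ Remark~\ref{II.rmk.WeakDerivative}), so that no such obstruction to density arises there.
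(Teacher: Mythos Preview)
Your argument is correct, and it is in fact a cleaner route than the one the paper takes. Both proofs begin by fixing a $C_c^1$-function $u$ that is non-constant on $S_{r_0}$ and assuming a sequence $\phi_n\in Y$ converges to $u$ in $X$; both then exploit the positive $p$-capacity of (pieces of) $S_{r_0}$ to derive a contradiction. The difference lies in how that capacity fact is used. You package it through the classical trace operator on the Lipschitz subdomain $U^{+}$: convergence $\phi_n\to u$ in $W^{1,p}(U^{+})$ forces $\phi_n|_\Gamma\equiv c_n\to u|_\Gamma$ in $L^p(\Gamma)$, so $u|_\Gamma$ would be constant. The paper instead works directly with the definition of $p$-capacity: it argues separately that the constants $c_n$ must converge to $M=\max_{S_{r_0}}u$ (otherwise the rescaled functions $w_n=\tfrac{3}{\epsilon}|u_n-u|$ would witness $\operatorname{Cap}_p$ of $S_{r_0}$ or of a cap $\Gamma$ being zero), and then repeats the capacity argument near a point where $u$ attains its minimum $m<M$ to obtain the final contradiction. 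Your approach is shorter and invokes only the standard trace inequality; the paper's approach is more self-contained in that it appeals only to the definition and basic lower bounds for $\operatorname{Cap}_p$, avoiding the need to set up the local Lipschitz domain $U^{+}$ and the trace operator. One small point worth making explicit in your write-up is why $\phi_n\to u$ in $X$ yields convergence in $W^{1,p}(U^{+})$: this follows since $\overline{U}\Subset B_1^c$ together with the embedding $X\hookrightarrow W^{1,p}_{\mathrm{loc}}(B_1^c)$, which is continuous on any relatively compact subdomain.
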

\begin{proof}
	Assume to the contrary that $Y$ is dense in $X$. Let $u\in C_c^1(B_1^c)$ be nonconstant on $S_{r_0}$, i.e., $M:=\displaystyle\max_{x\in S_{r_0}}u(x)> m:=\min_{x\in S_{r_0}}u(x)$. 
	The density of $Y$ implies that, there exists a sequence $\{u_n\}\subset Y$, such that $u_n\to u$ in $X$ as $n\to \infty.$	
	Since $u_n \in Y$, there exists $c_n \in \mathbb{R}$, satisfying $u_n\equiv c_n$ in a neighbourhood $\mathcal{N}_n $ of $S_{r_0}$. We claim that $c_n\to M$ as $n\to\infty.$ If this is not the case then there is a subsequence of $\{c_n\}$ (still denoted by $\{c_n\}$) and some $\epsilon>0$ such that
	$$|c_n-M|>\epsilon, \quad \forall n\in \mathbb{N}.$$
	This yields, up to a subsequence, $c_n-M>\epsilon$ for all $n\in \mathbb{N}$ or $M-c_n>\epsilon$ for all $n\in \mathbb{N}.$
	
	Suppose that $c_n-M>\epsilon$ for all $n\in \mathbb{N}.$ Now by the definition of $M$ and the continuity of $u$, there is a $\delta\in (0,r_0-1)$ such that $u(x)< M+\frac{\epsilon}{2}$ for all $x\in A_{r_0-\delta}^{r_0+\delta}$. For each $n$, set $w_n:=\frac{3}{\epsilon}|u_n-u|$ and also set $W_n:=\mathcal{N}_n\cap A_{r_0-\delta}^{r_0+\delta}$. Then $w_n\geq 0,$ $w_n\in L^{\frac{Np}{N-p}}(\mathbb{R}^N)$ and $|\nabla w_n|\in L^p(\mathbb{R}^N).$ Moreover, for all $x\in W_n$, we have
	$$w_n(x) >\frac{3}{\epsilon}\left(c_n-M-\frac{\epsilon}{2}\right)>\frac{3}{2}.$$
	So by the definition of $p$-capacity (see \cite[Definition 4.7.1]{Evans}) and the fact that 
	$w_n\to 0$ in $X$ as $n\to\infty$, we obtain
	$$\operatorname{Cap}_p(S_{r_0})
	\leq \int_{\mathbb{R}^N}|\nabla w_n|^p\diff x=\|w_n\|^p \to 0\ \text{as}\ n\to \infty. $$
	But $\operatorname{Cap}_p(S_{r_0})=0$ contradicts \cite[Application B of Subsection 3.3.4 and Theorem 4 of Section 4.7]{Evans}.
	
	We now consider the other case, $M-c_n>\epsilon$ for all $n\in\mathbb{N}.$ Let $x_M\in S_{r_0}$ be such that $u(x_M)=M.$ By the continuity of $u$ again, there is a $\delta\in (0,r_0-1)$ such that $u(x)>M-\frac{\epsilon}{2}$ for all $x\in B_{\mathbb{R}^N}(x_M,\delta)$. Set $\Gamma:=B_{\mathbb{R}^N}(x_M,\frac{\delta}{2})\cap S_{r_0}$ and for each $n,$ set $w_n:=\frac{3}{\epsilon}|u-u_n|$ and $W_n:=\mathcal{N}_n\cap B_{\mathbb{R}^N}(x_M,\delta)$. Then for all $x\in W_n$, we have
	$$w_n(x) >\frac{3}{\epsilon}\left(M-\frac{\epsilon}{2}-c_n\right)>\frac{3}{2}.$$
	Arguing as in the previous case we obtain $\operatorname{Cap}_p(\Gamma)=0$, which contradicts \cite[Application B of Subsection 3.3.4 and Theorem 4 of Section 4.7]{Evans}.
	
	From the arguments above we obtain that $c_n\to M$ as $n\to\infty.$ Then there exist $n_0\in\mathbb{N}$ and $\epsilon'>0$ such that
	\begin{equation*}
	c_n>m+\epsilon', \quad \forall n\geq n_0.
	\end{equation*}
	Let $x_m\in S_{r_0}$ be such that $u(x_m)=m.$ By the continuity of $u$ there is a $\delta'\in (0,r_0-1)$ such that $u(x)<m+\frac{\epsilon'}{2}$ for all $x\in B_{\mathbb{R}^N}(x_m,\delta')$. Set $\Gamma':=B_{\mathbb{R}^N}(x_m,\frac{\delta'}{2})\cap S_{r_0}$ and for each $n,$ set $w_n':=\frac{3}{\epsilon'}|u_n-u|$ and $W_n':=\mathcal{N}_n\cap B_{\mathbb{R}^N}(x_m,\delta').$ Then for all $n\geq n_0$ and all $x\in W_n'$ we have
	$$w_n'(x) >\frac{3}{\epsilon'}(c_n-m-\frac{\epsilon'}{2})>\frac{3}{2}.$$
		Proceeding as before, we get $\operatorname{Cap}_p(\Gamma')=0$, which is again a contradiction. The proof of Lemma~\ref{III.le.dense} is complete.

\end{proof}

The next result shows that $\varphi_1$ 
belongs to the closure of $Y$ in $X$.
\begin{lemma}\label{III.le.phi1.in.closure.of.Y}
	Under the assumption of Proposition ~\ref{III.prop.h}, 
	we have $\varphi_1\in \overline{Y}.$
\end{lemma}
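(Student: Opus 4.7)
The plan is to construct an explicit family in $Y$ converging to $\varphi_1$ in $X$. Under $\textup{(H)}$, $\varphi_1$ is radial and belongs to $C^1(B_1^c)$ (cf.\ the regularity paragraph preceding Lemma~\ref{II.meas.}), is positive on $B_1^c$, vanishes on $\partial B_1$, and decays at infinity via Proposition~\ref{II.behavior.of.phi_1}. As noted immediately after the statement of Proposition~\ref{III.prop.h}, the critical set is a single sphere $\mathcal{A}=S_{r_0}$, $r_0>1$, so $c:=\varphi_1(r_0)=\max_{B_1^c}\varphi_1>0$ is a positive constant. The idea is to flatten $\varphi_1$ to the value $c$ on a shrinking neighbourhood of $S_{r_0}$ while truncating near $\partial B_1$ and at infinity so that the result lies in $C_c^1(B_1^c)$.

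For small $\epsilon>0$, choose smooth cutoffs $\chi_\epsilon,\xi_\epsilon,\eta_\epsilon:[1,\infty)\to[0,1]$ satisfying: $\chi_\epsilon\equiv 0$ on $\{|r-r_0|\leq\epsilon/2\}$, $\chi_\epsilon\equiv 1$ on $\{|r-r_0|\geq\epsilon\}$, $|\chi_\epsilon'|\leq C/\epsilon$; $\xi_\epsilon\equiv 0$ on $[1,1+\epsilon]$, $\xi_\epsilon\equiv 1$ on $[1+2\epsilon,\infty)$, $|\xi_\epsilon'|\leq C/\epsilon$; $\eta_\epsilon\equiv 1$ on $[1,1/\epsilon]$, $\eta_\epsilon\equiv 0$ on $[2/\epsilon,\infty)$, $|\eta_\epsilon'|\leq C\epsilon$. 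Define
$$u_\epsilon(x):=\xi_\epsilon(|x|)\eta_\epsilon(|x|)\bigl[\chi_\epsilon(|x|)\varphi_1(x)+(1-\chi_\epsilon(|x|))c\bigr].$$
For $\epsilon$ sufficiently small, $u_\epsilon\in C_c^1(B_1^c)$ and $u_\epsilon\equiv c$ on the open annular neighbourhood $\{r_0-\epsilon/2<|x|<r_0+\epsilon/2\}$ of $\mathcal{A}=S_{r_0}$ (which lies inside $\{1+2\epsilon<|x|<1/\epsilon\}$). Hence $u_\epsilon\in Y$, and it suffices to prove $\|u_\epsilon-\varphi_1\|\to 0$ as $\epsilon\to 0^+$.

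Decompose $u_\epsilon-\varphi_1=(\xi_\epsilon\eta_\epsilon-1)\varphi_1+\xi_\epsilon\eta_\epsilon(1-\chi_\epsilon)(c-\varphi_1)=:I_\epsilon+II_\epsilon$. The term $I_\epsilon$ tends to $0$ in $X$ by a standard cutoff argument using $\varphi_1\in X$ and the decay from Proposition~\ref{II.behavior.of.phi_1}. The term $II_\epsilon$ is supported in the annulus $A_{r_0-\epsilon}^{r_0+\epsilon}$, on which $\xi_\epsilon\eta_\epsilon\equiv 1$ for small $\epsilon$, so
$$\nabla II_\epsilon=-\chi_\epsilon'(|x|)(c-\varphi_1)\tfrac{x}{|x|}-(1-\chi_\epsilon)\nabla\varphi_1$$
there. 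Since $|\nabla\varphi_1|$ is continuous and vanishes on $S_{r_0}$, the absolute continuity of the integral gives $\int_{A_{r_0-\epsilon}^{r_0+\epsilon}}|\nabla\varphi_1|^p\diff x\to 0$, which controls the second piece. For the first piece, $\varphi_1\in C^1$ together with $\varphi_1'(r_0)=0$ yields, via the mean value theorem,
$$\omega(\epsilon):=\sup_{|r-r_0|\leq\epsilon}|\varphi_1(r)-c|\leq\epsilon\cdot\sup_{|r-r_0|\leq\epsilon}|\varphi_1'(r)|=o(\epsilon),$$
so that
$$\int_{B_1^c}\bigl|\chi_\epsilon'(|x|)(c-\varphi_1)\bigr|^p\diff x\leq C\bigl(\omega(\epsilon)/\epsilon\bigr)^p\cdot\epsilon=C\,\omega(\epsilon)\bigl(\omega(\epsilon)/\epsilon\bigr)^{p-1}\longrightarrow 0.$$
Combining these estimates yields $\|u_\epsilon-\varphi_1\|\to 0$, whence $\varphi_1\in\overline{Y}$.

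The main obstacle is the delicate balance in the critical piece: $\chi_\epsilon'$ blows up like $1/\epsilon$, yet the factor $c-\varphi_1$ is only $o(\epsilon)$, not $O(\epsilon^2)$. The construction succeeds precisely because $\nabla\varphi_1$ vanishes along all of $S_{r_0}$ — that is, $\varphi_1$ is $C^1$-flat on $S_{r_0}$, not merely constant there. This is what makes the approximation possible even though $S_{r_0}$ has positive $p$-capacity (the very obstruction exploited in Lemma~\ref{III.le.dense} to rule out density of $Y$ in $X$).
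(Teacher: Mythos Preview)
Your argument is correct and follows essentially the same three-step strategy as the paper's proof in Appendix~\ref{AppendixD}: truncate near infinity, truncate near $\partial B_1$, and flatten to a constant near $S_{r_0}$, with the decisive estimate in the flattening step resting on $\varphi_1'(r_0)=0$ so that $c-\varphi_1=o(\epsilon)$ beats the $1/\epsilon$ blow-up of the cutoff derivative. The only difference is cosmetic: the paper builds each piece by explicit $C^1$-matched interpolants $\psi_{i,n}$ (Hermite-type, matching values and derivatives at the endpoints of each transition region), whereas you use standard multiplicative smooth cutoffs $\xi_\epsilon,\eta_\epsilon,\chi_\epsilon$ and a convex combination $\chi_\epsilon\varphi_1+(1-\chi_\epsilon)c$; your implementation is a little more streamlined but the underlying mechanism is identical.
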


\noindent The proof of this lemma can be found in the Appendix~\ref{AppendixD}.

Invoking Lemma~\ref{III.le.dense}, Lemma~\ref{III.le.phi1.in.closure.of.Y} and the Hahn-Banach Theorem we prove Proposition~\ref{III.prop.h}. 

\begin{proof}[Proof of Proposition~\ref{III.prop.h}]
	By Lemma~\ref{III.le.dense}, there exists a $\phi_0\in X\setminus\overline{Y}.$ Note that $\overline{Y}$ is a closed linear subspace of $X$. Define $$g:\  W:=\operatorname{span}\{\phi_0\}\oplus\overline{Y}\to \mathbb{R},$$
	$$ g(t\phi_0+v)=t,\quad \forall  (t,v) \in \mathbb{R} \times \overline{Y}.$$
	It is easy to see that $g$ is linear and there is a positive constant $C$, such that
	\begin{equation}\label{III.ineq1}
	g(t\phi_0+v)\leq C\|t\phi_0+v\|,\quad \forall (t,v) \in \mathbb{R} \times \overline{Y}.
	\end{equation}
	To prove this claim we first show that
	\begin{equation}\label{III.ineq2}
	\inf_{v\in \overline{Y}}\|\phi_0+v\|>0.
	\end{equation}
	Indeed if \eqref{III.ineq2} is not true then there is a sequence $\{v_n\}\subset \overline{Y}$ such that $\phi_0+v_n\to 0$ in $X$ as $n\to\infty$. This leads to $\phi_0\in\overline{Y},$ a contradiction. So we obtain \eqref{III.ineq2}. We now return to prove \eqref{III.ineq1}. Let $C=\left(\inf_{v\in \overline{Y}}\|\phi_0+v\|\right)^{-1}.$ The case $t\leq 0$ is trivial. For $t>0$, \eqref{III.ineq1} is equivalent to
		\begin{equation*}
	g(\phi_0+\frac{1}{t}v)\leq C\|\phi_0+\frac{1}{t}v\|,\quad \forall v \in \overline{Y},
	\end{equation*}
	i.e.,
		\begin{equation*}
1\leq C\|\phi_0+\frac{1}{t}v\|,\quad \forall v \in \overline{Y}.
	\end{equation*}
That holds true by the choice of $C$ and hence, \eqref{III.ineq1} is proved. Next, invoking the Hahn-Banach Theorem we can extend $g$ to a linear functional $h: X\to \mathbb{R}$ such that $h|_W=g$ and
	$|h(u)|\leq C\|u\|$  for all $u\in X$. Thus we can find an $h\in X^\ast$ such that $h(\phi_0)=1$ and $h|_{\overline{Y}}=0$ and this completes the proof of Proposition~\ref{III.prop.h} in view of Lemma~\ref{III.le.phi1.in.closure.of.Y}.
\end{proof}
\begin{remark}\rm\label{III.rmk.compare}
	Proposition~\ref{III.prop.h} illustrates another significant difference between the problem in the entire space $\mathbb{R}^N$ studied in \cite{Alziary} and the problem in $B_1^c$. Indeed, in the entire space case, $\mathcal{A}=\{0\}$ and for all $h\in X^\ast\setminus\{0\}$ we can find $\phi\in C_c^1(\mathbb{R}^N)$ such that $\langle h,\phi\rangle=1$ and $0\not\in \operatorname{supp}(\phi)$ (see \cite[Proof of Lemma 3.9]{Alziary}).
\end{remark}

To find an optimal condition on  $h\in X^\ast\setminus\{0\}$ such that there is a $\phi$ satisfying $\mathrm{(P_h)},$ we introduce the condition: 
$$h\in X^\ast_Y,\ \text{where}\ X^\ast_Y:=\{h\in X^\ast: h\not \equiv 0\ \text{on}\ Y\}.$$

\noindent This condition is reasonable due to the following result.


\begin{lemma}\label{III.set.of.h}
	Assume that the admissible weight $K$ satisfies $K>0$ a.e. in $B_1^c$ and $K\in L^\infty(A_1^R)$ for all $R>1.$ Then $X^\ast_Y$ contains the set
	$$Z:=\left\{h\in X^\ast\setminus\{0\}:\ \exists g\in C_c(B_1^c)\ \text{such that}\ \langle h,u\rangle =\int_{B_1^c}gu\diff x, \forall u\in X\right\},$$
	and $ X^\ast_Y$ is open and dense in $X^\ast$.
\end{lemma}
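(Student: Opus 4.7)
The plan is to verify the two assertions in turn, exploiting the fact (noted just after Lemma~\ref{II.meas.}) that under the hypotheses of the lemma, $\mathcal{A}$ is closed in $\mathbb{R}^N$ and has empty interior in $B_1^c$.

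For the inclusion $Z \subseteq X^\ast_Y$, let $h \in Z$ be represented by $g \in C_c(B_1^c)$ with $g \not\equiv 0$. Since the open set $\{g \neq 0\}$ is nonempty and $\operatorname{int}(\mathcal{A}) = \emptyset$, it cannot be contained in $\mathcal{A}$, so I can pick $x_0 \in \{g \neq 0\} \setminus \mathcal{A}$; by continuity of $g$ and closedness of $\mathcal{A}$, there is $\delta>0$ with $\overline{B_{\mathbb{R}^N}(x_0,\delta)} \subset B_1^c \setminus \mathcal{A}$ on which $g$ retains a constant sign (say positive). I then take any nonnegative $\phi \in C_c^1(B_{\mathbb{R}^N}(x_0,\delta))$ with $\phi(x_0) > 0$. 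The open set $B_1^c \setminus \operatorname{supp}\phi$ is a neighborhood of $\mathcal{A}$ on which $\phi$ vanishes identically, so $\phi \in Y$, while $\langle h, \phi \rangle = \int_{B_1^c} g\phi\,dx > 0$, placing $h$ in $X^\ast_Y$.

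For the openness and density of $X^\ast_Y$, I observe that its complement $\{h \in X^\ast : h|_Y \equiv 0\}$ is the annihilator $\overline{Y}^\perp$, realized as the intersection of the closed hyperplanes $\{h \in X^\ast : \langle h, \phi\rangle = 0\}$ as $\phi$ ranges over $Y$; this is a closed linear subspace, and so $X^\ast_Y$ is open. For density, it suffices to show this subspace is \emph{proper}, since a proper closed linear subspace of a Banach space has empty interior and therefore dense complement. The construction in the preceding paragraph already produces $\phi_0 \in Y \setminus \{0\}$, and the Hahn-Banach theorem supplies $h_1 \in X^\ast$ with $\langle h_1, \phi_0\rangle \neq 0$; this $h_1$ lies in $X^\ast_Y$, confirming $\overline{Y}^\perp \subsetneq X^\ast$. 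The only genuinely nontrivial step is the construction of the test function $\phi$ in the first part, which fundamentally relies on $\operatorname{int}(\mathcal{A}) = \emptyset$; everything else reduces to standard Banach space duality.
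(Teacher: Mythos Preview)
Your proof is correct. The inclusion $Z\subset X^\ast_Y$ and the openness of $X^\ast_Y$ are handled just as in the paper: pick a point where $g$ does not vanish, use $\operatorname{int}(\mathcal{A})=\emptyset$ to move it off $\mathcal{A}$, and build a bump function there; openness follows because the complement is the annihilator of $Y$, an intersection of norm-closed hyperplanes.

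Your density argument, however, is genuinely different from the paper's. The paper proves that $Z$ itself is dense in $X^\ast$: it represents an arbitrary $h\in X^\ast$ as $\langle h,u\rangle=\sum_i\int_{B_1^c}g_i\,\partial_{x_i}u\,dx$ with $g_i\in L^{p'}(B_1^c)$, approximates each $g_i$ in $L^{p'}$ by $\widetilde g_i\in C_c^\infty(B_1^c)$, and then integrates by parts to see the approximating functional is given by $\widetilde g=-\sum_i\partial_{x_i}\widetilde g_i\in C_c(B_1^c)$, hence lies in $Z$. Since $Z\subset X^\ast_Y$, density of $X^\ast_Y$ follows. You instead observe that the complement of $X^\ast_Y$ is a closed linear subspace, so it suffices that it be proper; this is immediate once $Y\ne\{0\}$, which your bump-function construction already guarantees. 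Your route is shorter and more conceptual, invoking only basic Banach space geometry; the paper's route is more hands-on but yields the extra information that $Z$ is dense in $X^\ast$, which is not asserted in the lemma but may be of independent interest.
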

We emphasize that, the embedding $X\hookrightarrow L_{loc}^p(B_1^c)$ implies that $u\mapsto \int_{B_1^c}gu\diff x$ is a linear bounded functional on $X$ for each $g\in C_c(B_1^c).$ The assumption $K\in L^\infty(A_1^R)$ for all $R>1$ guarantees that $\operatorname{dist}(\mathcal{A},\partial B_1)>0$ and hence, $Y\ne\emptyset.$
\begin{proof}[Proof of Lemma~\ref{III.set.of.h}]
	
	Suppose that $h\in X^\ast\setminus\{0\}$ and $\langle h,u\rangle =\int_{B_1^c}gu\diff x$ for all $u\in X$ for some $g\in C_c(B_1^c)$. Since $h\ne 0$ then so is $g$ and hence, $g(x_0)\ne 0$ for some $x_0\in B_1^c.$ By the       continuity of $g$, there is $r_0\in (0, |x_0|-1)$ such that $g(x)g(x_0)>0$ for all $x\in B_{\mathbb{R}^N}(x_0,r_0).$ By Lemma~\ref{II.meas.}, we have $\operatorname{int}(\mathcal{A})=\emptyset$. Thus, there exists $ x_1\in B_{\mathbb{R}^N}(x_0,r_0)\setminus \mathcal{A}$. Since $K\in L^\infty(A_1^R)$ for all $R>1$, $\mathcal{A}$ is closed and $\operatorname{dist}(\mathcal{A},\partial B_1)>0.$ Therefore, there exists $r_1\in (0,r_0-|x_1-x_0|)$ such that $B_{\mathbb{R}^N}(x_1,r_1)\cap \mathcal{A}=\emptyset.$ Then let $\phi\in C^\infty(\mathbb{R}^N)$ be such that $0\leq\phi \operatorname{sgn}(g(x_0))\leq 1,$ $\phi\equiv \operatorname{sgn}(g(x_0))$ in $B_{\mathbb{R}^N}(x_1,r_1/2)$ and $\phi\equiv 0$ in $\mathbb{R}^N \setminus B_{\mathbb{R}^N}(x_1,r_1).$
	Thus, $\phi\in Y$ and $\langle h,\phi\rangle >0$ and hence $h\in X^\ast_Y,$ i.e., $Z\subset X^\ast_Y.$
	
	Since $Z\subset X^\ast_Y$, to prove $\overline{X^\ast_Y}=X^\ast$ it suffices to show that $\overline{Z}=X^\ast$. Before we proceed further, we first observe that by a similar argument to that of \cite[Proof of Proposition 8.14]{Brezis-book}, we obtain that for a given $h\in X^\ast$, there exist $g_1,\cdots,g_N\in L^{p'}(B_1^c)$ with $\frac{1}{p}+\frac{1}{p'}=1$, such that
	\begin{equation}\label{III.form.of.h}
	\langle h,u\rangle=\sum_{i=1}^{N}\int_{B_1^c}g_i\frac{\partial u}{\partial x_i}\diff x.
	\end{equation}
	Next, let $h\in X^\ast$ be of the form \eqref{III.form.of.h}. For any given $\epsilon>0$, by the density of $C_c^\infty(B_1^c)$ in the $L^{p'}(B_1^c)$ for each $i\in\{1,\cdots, N\}$ we find $\widetilde{g}_i\in C_c^\infty(B_1^c)$ such that
	$$\|\widetilde{g}_i-g_i\|_{p'}\leq \frac{\epsilon}{N},$$
	where $\|\cdot\|_q$ denotes the usual Lebesgue norm on $L^q(B_1^c)$ $(1<q<\infty).$ Then for $\widetilde{h}\in X^\ast$, given by
	$$\langle \widetilde{h},u\rangle=\sum_{i=1}^{N}\int_{B_1^c}\widetilde{g}_i\frac{\partial u}{\partial x_i}\diff x,$$
	we have
	\begin{equation*}
	|\langle\widetilde{h}-h,u\rangle|= \left|\sum_{i=1}^{N}\int_{B_1^c}(\widetilde{g}_i-g_i)\frac{\partial u}{\partial x_i}\diff x\right|\leq \sum_{i=1}^{N}\|\widetilde{g}_i-g_i\|_{p'}\|\frac{\partial u}{\partial x_i}\|_p\leq \epsilon\|u\|,\ \forall u\in X.
	\end{equation*}
	Thus, $\|\widetilde{h}-h\|_{X^\ast}\leq \epsilon$ and note that $\langle \widetilde{h},u\rangle=\int_{B_1^c}\left(-\sum_{i=1}^{N}\frac{\partial \widetilde{g}_i}{\partial x_i}\right)u\diff x=\int_{B_1^c}\widetilde{g}u\diff x,$
	where $\widetilde{g}:=-\sum_{i=1}^{N}\frac{\partial \widetilde{g}_i}{\partial x_i}\in C_c(B_1^c).$ This implies the density of $Z$ in $X^\ast.$
	Finally, we show that $X^\ast_Y$ is open in $X^\ast.$ If this is not the case then there is an $h\in X^\ast_Y$ and a sequence $\{h_n\}\subset X^\ast$ with $h_n\equiv 0$ on $Y$ such that $\|h_n-h\|_{X^\ast}<\frac{1}{n}.$ Let $\phi\in Y$ be such that $\langle h,\phi\rangle\ne 0$. Then, we have
	$$|\langle h,\phi\rangle|=|\langle h_n-h,\phi\rangle|\leq\frac{1}{n}\|\phi\|\to 0\ \text{as}\ n\to\infty,$$
	a contradiction. So the proof is complete.
	\end{proof}

The following proposition together with the fact that $J_h$ is bounded from below on $X^{\bot}$ (this will be shown in the next section) provide a saddle point geometry of the energy functional associated with problem \eqref{1.1} in the resonant case.
\begin{proposition}\label{III.Prop.saddle}
	Assume that $1<p<2$ and that $K\in L^\infty(A_1^R)$ for all $R>1$. Let $h\in X^\ast_Y$ with $\langle h,\varphi_1\rangle=0$. Then for any $M>0$, there exist $\tau_0>0$, such that for each $\tau>\tau_0$ we can find $v^\tau_\pm\in X^{\bot}$ such that 
	$$\max\{J_h(\tau\varphi_1+v^\tau_+),J_h(-\tau\varphi_1+v^\tau_-)\}<-M.$$
	
\end{proposition}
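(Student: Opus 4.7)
The plan is to construct $v^\tau_\pm$ as a single fixed multiple of one direction $v \in X^\perp$ built from the hypothesis $h \in X^\ast_Y$, and to control $J_h(\pm\tau\varphi_1 + v^\tau_\pm)$ via a second-order Taylor expansion of
$$\Phi(u) := \tfrac{1}{p}\int_{B_1^c}|\nabla u|^p \diff x - \tfrac{\lambda_1}{p}\int_{B_1^c} K(x)|u|^p \diff x$$
around the critical points $\pm\mu\varphi_1$ for $|\mu|$ large. Since $h \in X^\ast_Y$, there exists $\phi_0 \in Y$ with $\langle h, \phi_0\rangle > 0$ (after possibly flipping sign). Set
$$c := \Bigl(\int_{B_1^c} K \varphi_1^{p} \diff x\Bigr)^{-1}\int_{B_1^c} K \varphi_1^{p-1}\phi_0 \diff x,\qquad v := \phi_0 - c\varphi_1 \in X^\perp;$$
since $\langle h,\varphi_1\rangle=0$, we still have $\langle h,v\rangle = \langle h,\phi_0\rangle > 0$. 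For parameters $\alpha,\tau>0$ to be chosen, take $v^\tau_\pm := \alpha v$; then $\pm\tau\varphi_1 + \alpha v = \pm\mu_\pm\varphi_1 + \alpha\phi_0$ with $\mu_+ := \tau - \alpha c$ and $\mu_- := \tau + \alpha c$, and
$$J_h(\pm\tau\varphi_1 + \alpha v) = \Phi(\pm\mu_\pm\varphi_1 + \alpha\phi_0) - \alpha\langle h,v\rangle.$$

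The heart of the argument is to show that $|\Phi(\pm\mu\varphi_1 + \alpha\phi_0)| \le C(\phi_0,K)\alpha^2 |\mu|^{p-2}$ as $|\mu|\to\infty$. The decisive geometric properties of $\phi_0 \in Y$ are that $\operatorname{supp}(\phi_0)$ is a compact subset of $B_1^c$ and that $\phi_0$ is constant on an open neighbourhood of $\mathcal{A}$. Using the continuity and strict positivity of $\varphi_1$ together with $\mathcal{A}$ being closed with $\operatorname{dist}(\mathcal{A},\partial B_1)>0$, we obtain constants $\delta,\varepsilon>0$ such that $|\nabla\varphi_1| \ge \delta$ on $\operatorname{supp}(\nabla\phi_0)$ and $\varphi_1 \ge \varepsilon$ on $\operatorname{supp}(\phi_0)$. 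Hence, for $|\mu|$ large relative to $\alpha$, both $\pm\mu\nabla\varphi_1 + s\alpha\nabla\phi_0$ and $\pm\mu\varphi_1 + s\alpha\phi_0$ remain bounded away from zero on the respective supports, uniformly in $s\in[0,1]$; consequently, $s \mapsto \Phi(\pm\mu\varphi_1 + s\alpha\phi_0)$ lies in $C^2([0,1])$. Together with $\Phi(\pm\mu\varphi_1)=0$ and $\Phi'(\pm\mu\varphi_1)=0$ (both from the eigenvalue equation), the integral Taylor formula gives
\begin{align*}
\Phi(\pm\mu\varphi_1 + \alpha\phi_0) &= \alpha^2 \int_0^1 (1-s)\Bigl[\int_{B_1^c}\langle \mathbb{A}(\pm\mu\nabla\varphi_1 + s\alpha\nabla\phi_0)\nabla\phi_0,\nabla\phi_0\rangle_{\mathbb{R}^N} \diff x \\
&\qquad - \lambda_1(p-1)\int_{B_1^c} K(x)|\pm\mu\varphi_1 + s\alpha\phi_0|^{p-2}\phi_0^2\diff x\Bigr]\diff s.
\end{align*}
The upper bound in \eqref{II.est.A} (which reads $\langle\mathbb{A}(\mathbf a)\mathbf v,\mathbf v\rangle_{\mathbb{R}^N} \le |\mathbf a|^{p-2}|\mathbf v|^2$ when $1<p<2$), combined with the pointwise lower bounds $|\pm\mu\nabla\varphi_1 + s\alpha\nabla\phi_0|\ge \mu\delta/2$ on $\operatorname{supp}(\nabla\phi_0)$ and $|\pm\mu\varphi_1 + s\alpha\phi_0| \ge \mu\varepsilon/2$ on $\operatorname{supp}(\phi_0)$, and the local boundedness of $K$ on the compact set $\operatorname{supp}(\phi_0)$, yield the claimed $|\mu|^{p-2}$ estimate.

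Given $M>0$, the proof concludes by choosing parameters in the correct order: first pick $\alpha_M$ with $\alpha_M\langle h,v\rangle \ge 2M$, then pick $\tau_0 = \tau_0(M)$ large enough that $\mu_\pm \ge 1$ and $C(\phi_0,K)\alpha_M^2\mu_\pm^{p-2} \le M$ for all $\tau > \tau_0$ (possible because $p-2<0$). Setting $v^\tau_\pm := \alpha_M v \in X^\perp$ then gives $J_h(\pm\tau\varphi_1 + v^\tau_\pm) \le M - 2M = -M$, as required. The principal obstacle is the justification of the second-order Taylor expansion for $\Phi$ in the subquadratic regime $1 < p < 2$, where $\Phi$ fails to be $C^2$ on all of $X$; this is exactly why $\phi_0$ must lie in $Y$, so that $\operatorname{supp}(\nabla\phi_0)$ avoids the degeneracy set $\mathcal{A}$ and $\operatorname{supp}(\phi_0) \subset\subset B_1^c$ sits where $\varphi_1$ is bounded away from zero. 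Thus the condition $h \in X^\ast_Y$, together with Proposition~\ref{III.prop.h}, is indispensable in the exterior-domain setting, in sharp contrast to the entire-space case where any $h \in X^\ast\setminus\{0\}$ admits such a test function.
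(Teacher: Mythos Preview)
Your argument is correct and shares the same skeleton as the paper's proof---both pick $\phi_0\in Y$ with $\langle h,\phi_0\rangle\neq 0$, exploit that $\operatorname{supp}(\nabla\phi_0)$ avoids $\mathcal{A}$ and $\operatorname{supp}(\phi_0)$ sits where $\varphi_1>0$ to justify a one-dimensional second-order Taylor expansion of $\Phi$, and then decompose into the $\varphi_1$-direction and $X^\perp$. The execution, however, is genuinely different. The paper sets $u_\pm=\pm t\varphi_1+t^{(2-p)/2}\phi$, uses the homogeneity $\Phi(tu)=t^p\Phi(u)$ to rewrite $J_h(u_\pm)=\mathcal{Q}_{\pm t^{-p/2}\phi}(\phi,\phi)-t^{(2-p)/2}$, shows the quadratic remainder stays \emph{bounded} while the linear gain $t^{(2-p)/2}\to\infty$, and afterwards reparametrises $t\mapsto\tau$ via the monotone functions $g_\pm(t)=\pm t+t^{(2-p)/2}\tau_\phi$ to land in the form $\pm\tau\varphi_1+v^\tau_\pm$. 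You instead fix the perturbation $\alpha_M v\in X^\perp$ once (depending on $M$), expand directly around $\pm\mu_\pm\varphi_1$, and show the quadratic remainder \emph{decays} like $\mu_\pm^{p-2}\to 0$ against a fixed gain $\alpha_M\langle h,v\rangle$. Your route is a bit more elementary---no homogeneity trick, no final monotone reparametrisation---at the modest cost that your $v^\tau_\pm$ depends on $M$ (which the statement permits), whereas the paper's $v^\tau_\pm=t_\pm^{(2-p)/2}\phi^\perp$ is determined by $\tau$ alone. Two small points of polish: the threshold on $\mu_\pm$ for the lower bounds $|\pm\mu_\pm\nabla\varphi_1+s\alpha_M\nabla\phi_0|\geq \mu_\pm\delta/2$ to kick in depends on $\alpha_M$ (not just ``$\mu_\pm\geq 1$''), so say this explicitly when choosing $\tau_0$; and to get the strict inequality $<-M$, take $\alpha_M\langle h,v\rangle>2M$ strictly.
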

\begin{proof}
	 Let $M>0$ be given. Let $\phi\in Y$ such that $\langle h,\phi\rangle =1.$ Set
	$$u_{\pm}:=\pm t\varphi_1+t^{\frac{2-p}{2}}\phi\ \text{with}\ t>0.$$
	Since $\phi\in Y$, $\phi$ is constant on a neighbourhood of 
	$\{x\in B_1^c: \nabla\varphi_1(x)=0\}$ and has a compact support in $B_1^c.$ So by using the Lebesgue dominated convergence theorem, we can show that $f(t):=\frac{1}{p}\int_{B_1^c}|\nabla\varphi_1+t\nabla\phi|^p\diff x-\frac{\lambda_1}{p}\int_{B_1^c}K(x)|\varphi_1+t\phi|^p\diff x$ belongs to $C^2[-t_0,t_0]$ for some $0<t_0\ll 1$. For each $\xi\in [-t_0,t_0],$ applying the second order Taylor formula for function $s\mapsto f(\xi s)$ and utilizing the properties of $(\lambda_1,\varphi_1)$ we get
	$$f(\xi)=\int_{0}^{1}\xi^2f''(\xi s)(1-s)ds.$$
	Thus, for sufficiently large $t$, we have
	\begin{equation}\label{III.J.behavior}
	J_h(u_{\pm})=t^pf\left(\pm t^{-\frac{p}{2}}\right)-t^{\frac{2-p}{2}}=\mathcal{Q}_{\pm t^{-\frac{p}{2}}\phi}(\phi,\phi)-t^{\frac{2-p}{2}},
	\end{equation}
	where
	\begin{align*}\label{III.Q}
	\mathcal{Q}_{\pm t^{-\frac{p}{2}}\phi}(\phi,\phi)=&\int_{B_1^c}\left\langle\left[\int_{0}^{1}\mathbb{A}\left(\nabla\varphi_1+s(\pm t^{-\frac{p}{2}}\nabla\phi)\right)(1-s)\diff s\right]\nabla \phi,\nabla \phi\right\rangle_{\mathbb{R}^N}\diff x\notag\\
	&-\lambda_1(p-1)\int_{B_1^c}K(x)\left[\int_{0}^{1}|\varphi_1+s(\pm t^{-\frac{p}{2}}\phi)|^{p-2}(1-s)\diff s\right]\phi^2\diff x.
	\end{align*}
	Using \eqref{II.est.A.int2}, we have
	\begin{align*}
	\int_{0}^{1}&\left\langle\mathbb{A}\left(\nabla\varphi_1+s(\pm t^{-\frac{p}{2}}\nabla\phi)\right)\nabla \phi,\nabla \phi\right\rangle_{\mathbb{R}^N}(1-s)\diff s\\
	&\leq C_p\left(\underset{0\leq s\leq 1}{\max}|\nabla\varphi_1+s(\pm t^{-\frac{p}{2}}\nabla\phi)|\right)^{p-2}|\nabla\phi|^2\leq C_p|\nabla\varphi_1|^{p-2}|\nabla\phi|^2.
	\end{align*}
	On the other hand, for $x\in \operatorname{supp}(\phi)$, $s\in [0,1]$ and $t>0$ large, we have	
	$$|\varphi_1+s(\pm t^{-\frac{p}{2}}\phi)|\geq \inf_{\operatorname{supp}(\phi)}\varphi_1-t^{-\frac{p}{2}}\|\phi\|_\infty>\frac{1}{2}\inf_{\operatorname{supp}(\phi)}\varphi_1>0.$$
The last two estimates imply that, there exist $t_\phi>0$ and $\widetilde{M}>0$, such that
	$$\mathcal{Q}_{\pm t^{-\frac{p}{2}}\phi}(\phi,\phi)<\widetilde{M},\ \forall t>t_\phi.$$
	Combining this with \eqref{III.J.behavior}, we get
	$$J_h(\pm t\varphi_1+t^{\frac{2-p}{2}}\phi)<-M,$$
	for all $t>t_1:=\max\left\{t_\phi,(M+\widetilde{M})^{\frac{2}{2-p}}\right\}.$ 
	Decompose $\phi=\tau_\phi\varphi_1+\phi^{\bot}$ with $\tau_\phi\in\mathbb{R},\phi^{\bot}\in X^{\bot}$ and consider $g_\pm(t):=\pm t+t^{\frac{2-p}{2}}\tau_\phi.$ Clearly, $g_\pm$ are continuous in $(0,\infty)$; moreover, for $t > t_2:=\left(\frac{(2-p)|\tau_\phi|}{2}\right)^{\frac{2}{p}}$, we see that
	$g_+$ is strictly increasing while $g_-$ is strictly decreasing. 
	Let $t_3>\max\{t_1,t_2\}$ such that $g_-(t_3)<0.$ Set $\tau_0:=\max\{g_+(t_3),-g_-(t_3)\}.$ Thus, for any $\tau>\tau_0$ let $t_\pm>t_3$ be such that $\pm\tau=g_\pm(t_\pm)$ and $v^\tau_\pm=t_\pm^{\frac{2-p}{2}}\phi^{\bot}\in X^{\bot}.$ Then, we have
	$$J_h(\pm\tau\varphi_1+v_\pm^\tau)=J_h(\pm t_\pm\varphi_1+t_\pm^{\frac{2-p}{2}}\phi)<-M,$$ and this finishes the proof.
	\end{proof}


\section{Existence and multiplicity results}\label{Sec.Existence}

\subsection{Statements of the existence results} In this section we investigate the existence and multiplicity of solutions of problem \eqref{1.1}. In the non-resonant case, Anoop et al. \cite{Anoop.NA} obtained the following existence result thanks to the isolatedness of $\lambda_1$ and the Fredholm alternative for the $p$-Laplacian due to Fu\v{c}\'{i}k et al. \cite[Chapter II, Theorem 3.2]{Fucik} (see also \cite[Theorem 4.4]{Stavrakakis}). 
Recall that $K$ is always assumed to be admissible.


\begin{theorem}[cf. {\cite[Proposition 3.1]{Anoop.NA}}]
	Let $p>1.$ 
	Then for every $\lambda\in (0,\lambda_1)$ and $h\in X^\ast$, problem \eqref{1.1} admits a solution in $X$. If in addition $K\in L^s(A_1^R)\cap L_{loc}^\infty(B_1^c)$ for some $s>\frac{N}{p}$ and $R>1$ when $1<p\leq N$ or $K\in L_{loc}^\infty(B_1^c)$ when $p>N$ then $\lambda_1$ is isolated and there exists $\delta>0$ such that for every $\lambda\in (\lambda_1,\lambda_1+\delta)$ and $h\in X^\ast$, problem \eqref{1.1} also admits a solution in $X$.
\end{theorem}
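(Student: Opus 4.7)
The plan is to split the argument into the subcritical case $\lambda<\lambda_1$ (direct method) and the supercritical case $\lambda>\lambda_1$ close to $\lambda_1$ (abstract Fredholm alternative, which requires isolatedness of $\lambda_1$).

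For $\lambda\in(0,\lambda_1)$, I would associate with \eqref{1.1} the energy functional
\[
J_\lambda(u)=\frac{1}{p}\int_{B_1^c}|\nabla u|^p\diff x-\frac{\lambda}{p}\int_{B_1^c}K(x)|u|^p\diff x-\langle h,u\rangle,
\]
defined on $X$. Using the variational characterization of $\lambda_1$, one has $\int_{B_1^c}K|u|^p\diff x\le \lambda_1^{-1}\|u\|^p$, hence $J_\lambda(u)\ge \tfrac{1}{p}(1-\lambda/\lambda_1)\|u\|^p-\|h\|_{X^\ast}\|u\|$, which is coercive because $\lambda<\lambda_1$. The compact embedding $X\hookrightarrow\hookrightarrow L^p(B_1^c;w)$ combined with the admissibility bound $|K|\le w$ makes the map $u\mapsto \int_{B_1^c}K|u|^p\diff x$ weakly continuous on $X$, so $J_\lambda$ is weakly lower semicontinuous. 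A minimizer exists by the direct method and is a weak solution of \eqref{1.1}.

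For the second part I would first establish isolatedness of $\lambda_1$ under the stronger regularity assumption on $K$. I would argue by contradiction \`a la Anane: if there were eigenvalues $\mu_n\searrow\lambda_1$ with corresponding normalized eigenfunctions $u_n\in X$, $\|u_n\|=1$, then (up to a subsequence) $u_n\rightharpoonup u$ in $X$ and $u_n\to u$ in $L^p(B_1^c;w)$, which forces $\int K|u|^p=1/\lambda_1$ and $\|u\|\le 1$, so $u$ realizes the infimum defining $\lambda_1$ and therefore $u=\pm c\varphi_1$ with $c>0$; by simplicity $u$ has constant sign in $B_1^c$. On the other hand, since $\mu_n>\lambda_1$, the eigenfunctions $u_n$ must be sign-changing (otherwise testing the eigenvalue equation for $u_n$ against $\varphi_1^p/u_n^{p-1}$ in Picone style would contradict $\mu_n>\lambda_1$), and the $C^{1,\alpha}_{\mathrm{loc}}$ regularity of $u_n$ together with the equation satisfied by $u_n^{\pm}$ and the Sobolev embedding yield a uniform lower bound on the measures of $\{u_n^\pm>0\}\cap K_0$ on a fixed compact set $K_0$. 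Passing to the limit contradicts the constant sign of $u$. This gives a $\delta>0$ so that $(\lambda_1,\lambda_1+\delta)$ contains no eigenvalues.

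Once $\lambda_1$ is isolated, for $\lambda\in(\lambda_1,\lambda_1+\delta)$ I would invoke the abstract Fredholm alternative for the $p$-Laplacian of Fu\v{c}\'{i}k et al. \cite[Chapter II, Theorem 3.2]{Fucik}: the operator $A_\lambda:X\to X^\ast$ defined by $\langle A_\lambda u,v\rangle=\int_{B_1^c}|\nabla u|^{p-2}\nabla u\cdot\nabla v\diff x-\lambda\int_{B_1^c}K|u|^{p-2}uv\diff x$ is of type $(S_+)$ and $(p-1)$-homogeneous, and because $\lambda$ is not an eigenvalue the Leray--Schauder degree of $A_\lambda$ on large balls of $X$ is non-zero (its computation reduces, via homotopy through a non-eigenvalue path, to that at some $\lambda'\in(0,\lambda_1)$, which is non-zero by the first part). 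Surjectivity of $A_\lambda$ follows, yielding a solution of \eqref{1.1} for every $h\in X^\ast$.

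The delicate step is the isolatedness argument: the Picone-type test needed to rule out a sign-definite $u_n$ and the uniform lower bound on $|\{u_n^{\pm}>0\}|$ must be carried out on the exterior domain $B_1^c$, where eigenfunctions need not have compact support and $\nabla\varphi_1$ vanishes on the entire sphere $S_{r_0}$; the \cite[Proposition 3.1]{Anoop.NA} reference indicates that this technicality has been settled in \cite{Anoop.NA}, so in practice I would simply cite that result for both isolatedness and the Fredholm alternative conclusion.
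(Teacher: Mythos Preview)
Your proposal is correct and matches the paper's approach: the paper does not prove this theorem but simply attributes it to \cite[Proposition 3.1]{Anoop.NA}, noting that it follows from the isolatedness of $\lambda_1$ together with the Fredholm alternative for the $p$-Laplacian of Fu\v{c}\'{i}k et al.\ \cite[Chapter II, Theorem 3.2]{Fucik}. Your sketch (direct method for $\lambda<\lambda_1$, Anane-type isolatedness plus degree-theoretic surjectivity for $\lambda\in(\lambda_1,\lambda_1+\delta)$) is exactly the route those references take, and your final remark that one should in practice cite \cite{Anoop.NA} is precisely what the paper does.
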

In the resonant case for the linear problem, i.e., $\lambda=\lambda_1$ and $p=2,$ it is easy to see that a necessary condition for solvability of problem \eqref{1.1} is $\langle h,\varphi_1\rangle=0.$ As we will see later, it is also a sufficient condition in this case (see Theorem~\ref {V.Theo.degenerate} (i) below). We will also see that this condition is not necessary but sufficient for existence of a solution for any $p>1$, $p \neq 2.$ More precisely, we obtain existence and multiplicity of solutions to problem \eqref{1.1}, for the resonant case with $h$ in a neighbourhood of some $h^\ast\in X^\ast\setminus\{0\}$ satisfying $\langle h^\ast,\varphi_1\rangle=0$, by modifying variational arguments used in \cite{Alziary,Drabek.EJDE2002}. As seen in Proposition~\ref{III.Prop.saddle} and Proposition~\ref{II.Prop.Poincare}, for $\lambda=\lambda_1$ and $h=h^\ast$ the energy functional corresponding to problem \eqref{1.1} is unbounded from below in case $1<p<2$, whereas it is bounded from below in case $2< p<N.$ Because of this we will deal with the singular case $1<p<2$ and the degenerate case $2<p<N$ separately. 
We first state our main result for the singular case $1<p<2$. 


\begin{theorem}\label{V.Theo.singular}
	Let $1<p<2$ and let $K\in L^\infty(A_1^R)$ for all $R>1.$ Let $h^\ast\in X^\ast_Y$ be such that $\langle h^\ast,\varphi_1\rangle=0.$ Then there exist $\rho>0$ such that
	for every $h\in B_{X^\ast}(h^\ast,\rho),$ problem \eqref{1.1} with $\lambda=\lambda_1$ has a solution. If in addition $\langle h,\varphi_1\rangle\ne 0,$ then problem \eqref{1.1} with $\lambda=\lambda_1, $ has two distinct solutions.
\end{theorem}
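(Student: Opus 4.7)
The two parts are obtained by variational arguments for the $C^1$ energy functional $J_h$, whose critical points are exactly the weak solutions of \eqref{1.1} at $\lambda=\lambda_1$. Using the direct sum decomposition $X=\mathbb{R}\varphi_1\oplus X^\bot$, the strategy is to obtain the first solution via a saddle point theorem in the spirit of \cite{Dra-Hol}, and the second, when $\langle h,\varphi_1\rangle\ne 0$, via a mountain pass argument exploiting the asymmetric behavior of $J_h$ along the $\varphi_1$-ray.

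For the first solution, Proposition~\ref{III.Prop.saddle} already supplies, on each side of $X^\bot$, points $\pm\tau\varphi_1+v_\pm^\tau$ at which $J_h$ is arbitrarily negative, and this geometric ingredient survives small perturbations of $h$ because $h\mapsto\langle h,\cdot\rangle$ enters $J_h$ linearly. The remaining ingredients are (a) a uniform lower bound $\inf_{X^\bot}J_h\ge -C$ for $\rho$ small, and (b) the Palais--Smale condition at each finite level. For (a), the simplicity of $\lambda_1$ together with the compact embedding $X\hookrightarrow\hookrightarrow L^p(B_1^c;w)$ yields, by a standard contradiction argument, the strict gap
$$\lambda_1':=\inf\left\{\int_{B_1^c}|\nabla u|^p\diff x:\ u\in X^\bot,\ \int_{B_1^c}K(x)|u|^p\diff x=1\right\}>\lambda_1,$$
so $J_h|_{X^\bot}$ is coercive up to the bounded linear perturbation by $h$. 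For (b), a Palais--Smale sequence $u_n=\tau_n\varphi_1+v_n$ has $\|v_n\|$ bounded from the coercivity above and $|\tau_n|$ bounded by testing $\langle J_h'(u_n),\varphi_1\rangle\to 0$; the singular nonlinearity $|\nabla u_n|^{p-2}$ for $1<p<2$ is controlled by the matrix estimates for $\mathbb{A}(\cdot)$ recorded in Section~\ref{Sec.Preliminaries}. The saddle point theorem then yields a first critical point $u_1$.

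For the second solution, assume without loss of generality $\langle h,\varphi_1\rangle>0$. In the chamber $H_-:=\{u:\int K\varphi_1^{p-1}u<0\}$ one has $J_h(-t\varphi_1)=t\langle h,\varphi_1\rangle\to+\infty$ as $t\to+\infty$, whereas Proposition~\ref{III.Prop.saddle} still furnishes points $-\tau\varphi_1+v_-^\tau\in H_-$ with $J_h$ arbitrarily negative. The linearly growing spike along $-\mathbb{R}^+\varphi_1$ against the unbounded-below saddle direction produces a mountain pass configuration, with suitable base points below a ridge transverse to the ray; combined with the PS condition from the first stage, the mountain pass theorem of Ambrosetti--Rabinowitz furnishes a second critical point $u_2$, whose critical value can be separated from $J_h(u_1)$ by choosing the ridge position and $\tau$ sufficiently large. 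The central obstacle is the joint verification of (a), (b), and this MP geometry in the singular regime $1<p<2$, where the quadratic control via the form $\mathcal{Q}_0$ available for $p>2$ is not at hand; the argument must instead leverage the saddle geometry of Proposition~\ref{III.Prop.saddle}, the compactness of the embedding into weighted $L^p$, and the simplicity of $\lambda_1$, with $\rho$ fixed only after these estimates close.
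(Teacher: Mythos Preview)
Your proposal has a genuine gap in the Palais--Smale verification, and this gap is exactly what forces the paper to take a different route. You claim that for a $(PS)$ sequence $u_n=\tau_n\varphi_1+v_n$ one bounds $|\tau_n|$ by testing $\langle J_h'(u_n),\varphi_1\rangle\to 0$. For $p\ne 2$ this relation reads
\[
\int_{B_1^c}|\nabla u_n|^{p-2}\nabla u_n\cdot\nabla\varphi_1\diff x-\lambda_1\int_{B_1^c}K|u_n|^{p-2}u_n\varphi_1\diff x=\langle h,\varphi_1\rangle+o(1),
\]
which is fully nonlinear in $u_n$ and yields no a priori control on $\tau_n$; your argument for bounding $\|v_n\|$ via coercivity on $X^\bot$ is then circular, since Lemma~\ref{V.le.coercive} only applies once $|\tau_n|\le T$ is already known. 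In the paper the $(PS)$ condition is established (Lemma~\ref{V.le.PS}) \emph{only} under the hypothesis $\langle h,\varphi_1\rangle\ne 0$, via the identity $(p-1)\langle h,u_n\rangle=-pJ_h(u_n)+\langle J_h'(u_n),u_n\rangle=o(\|u_n\|)$ together with a normalization--compactness argument that collapses precisely when $\langle h,\varphi_1\rangle=0$. Since the theorem must cover all $h\in B_{X^\ast}(h^\ast,\rho)$, in particular those with $\langle h,\varphi_1\rangle=0$, a saddle point theorem relying on $(PS)$ cannot close the argument.

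The paper circumvents this as follows. It works with the reduced function $j(\tau;h)=\inf_{u^\bot\in X^\bot}J_h(\tau\varphi_1+u^\bot)$, shows it is continuous on $\mathbb{R}\times X^\ast$ (Lemma~\ref{V.le.continuity}), and uses Proposition~\ref{III.Prop.saddle} at $h^\ast$ to find $M_2<0<M_1$ with $\max\{j(M_1;h),j(M_2;h)\}<j(0;h)$ for all $h$ close to $h^\ast$. When $\langle h,\varphi_1\rangle=0$, the one-variable continuous function $\tau\mapsto j(\tau;h)$ therefore has a local maximum on $(M_2,M_1)$, and Lemma~\ref{V.le.maximum} converts this directly into a critical point of $J_h$ \emph{without} invoking $(PS)$. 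When $\langle h,\varphi_1\rangle\ne 0$, the paper does not use a saddle point theorem either: it builds a bounded weakly closed box $D=\{\tau\varphi_1+u^\bot:\tau\in[0,M],\ \|u^\bot\|\le R\}$ and, using Lemmas~\ref{V.le.behavior} and~\ref{V.le.nonnegativeness}, shows $\inf_D J_h<\inf_{\partial D}J_h$, so the weak lower semicontinuity of $J_h$ produces an interior (hence local) minimizer; only then is $(PS)$, now available, used with the Mountain Pass theorem to get the second solution. Note also that your invocation of Proposition~\ref{III.Prop.saddle} for $h$ with $\langle h,\varphi_1\rangle\ne 0$ is illegitimate: that proposition is stated and proved only under $\langle h,\varphi_1\rangle=0$.
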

For the degenerate or linear case $p\geq 2$, we have the following result.


\begin{theorem}\label{V.Theo.degenerate}
	\begin{itemize}
		\item[(i)] Let $p=2$ and $h\in X^\ast$. Then problem~\eqref{1.1} with $\lambda=\lambda_1$ has a solution if and only if $\langle h,\varphi_1\rangle=0.$ When $\langle h,\varphi_1\rangle=0$, there exists a unique function $u^{\bot} \in X^{\bot}$, such that $u\in X$ is a solution of problem~\eqref{1.1} with $\lambda=\lambda_1$ if and only if $u=\tau\varphi_1+u^{\bot}$ for some $\tau\in\mathbb{R}.$ 
		\item[(ii)] Let $2<p<N.$ Let $\textup{(H)}$ and $\textup{(W)}$ be satisfied with $\displaystyle \lim_{\rho\to\infty}\underset{r\geq\rho}{\operatorname{ess\ sup}}\ r^pK(r)=0$. Suppose $h^\ast\in\mathcal{D}_{\varphi_1}^\ast\setminus\{0\}$ be such that $\langle h^\ast,\varphi_1\rangle=0$. Then there exist $\rho>0$ such that problem \eqref{1.1} with $\lambda=\lambda_1,$ $h\in B_{X^\ast}(h^\ast,\rho)$ has a solution. If in addition such an $h$ satisfies $\langle h,\varphi_1\rangle\ne 0,$ then problem \eqref{1.1} with $\lambda=\lambda_1$, has two distinct solutions.
	\end{itemize}	
\end{theorem}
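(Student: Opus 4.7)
For part (i), the necessity $\langle h,\varphi_1\rangle=0$ follows by testing the weak form of \eqref{1.1} with $v=\varphi_1$ and invoking the identity $\int_{B_1^c}\nabla u\cdot\nabla\varphi_1\,\diff x=\lambda_1\int_{B_1^c}Ku\varphi_1\,\diff x$ for $\varphi_1$ as an eigenfunction. For sufficiency, decompose an arbitrary $u\in X$ as $u=\tau\varphi_1+u^{\bot}$ with $u^\bot\in X^\bot$. Using the eigenvalue equation for $\varphi_1$, the orthogonality $\int K\varphi_1 u^\bot\,\diff x=0$ (valid for $p=2$ since $p-1=1$), and $\langle h,\varphi_1\rangle=0$, a direct expansion cancels all $\tau$-dependent terms and gives
\[
J_h(\tau\varphi_1+u^\bot)=\tfrac{1}{2}\int_{B_1^c}|\nabla u^\bot|^2\diff x-\tfrac{\lambda_1}{2}\int_{B_1^c}K(u^\bot)^2\diff x-\langle h,u^\bot\rangle.
\]
By part~(i) of Proposition~\ref{II.Prop.Poincare}, this restriction is strictly convex and coercive on $X^\bot$; hence it admits a unique minimizer $u^\bot$, and the full set of critical points of $J_h$ is $\{\tau\varphi_1+u^\bot:\tau\in\mathbb R\}$, which completes part (i).

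For part (ii), the plan is direct minimization for a first solution followed by Mountain Pass for a second. The improved Poincaré inequality of Proposition~\ref{II.Prop.Poincare}(ii), together with the decomposition $u=\tau\varphi_1+u^\bot$, yields
\[
J_h(u)\geq\tfrac{C}{p}\bigl(|\tau|^{p-2}\|u^\bot\|_{\mathcal{D}_{\varphi_1}}^2+\|u^\bot\|^p\bigr)-\tau\langle h,\varphi_1\rangle-\|h\|_{X^\ast}\|u^\bot\|.
\]
For $h=h^\ast$ (so $\langle h^\ast,\varphi_1\rangle=0$), this makes $J_{h^\ast}$ bounded below on $X$. Along a minimizing sequence $\{u_n=\tau_n\varphi_1+u_n^\bot\}$, the $\|u^\bot\|^p$-term forces $\{u_n^\bot\}$ to be bounded in $X$. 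The delicate sub-case $|\tau_n|\to\infty$ is handled by observing that the lower bound then forces $\|u_n^\bot\|_{\mathcal{D}_{\varphi_1}}\to 0$; combining this with the compact embeddings in Lemma~\ref{II.le.compact.imb} and weak lower semicontinuity of $J_{h^\ast}$ permits extraction of a minimizer $u_0\in X$, which is a critical point of $J_{h^\ast}$ and hence a solution.

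For a general $h\in B_{X^\ast}(h^\ast,\rho)$ with $\rho$ small, I obtain the first solution by a perturbation argument: the definite-sign second variation of $J_{h^\ast}$ in the $X^\bot$-direction at $u_0$ (again from the improved Poincaré inequality) persists under small perturbations of $h$, and a localized minimization of $J_h$ on a fixed closed ball around $u_0$ produces a critical point of $J_h$. When additionally $\langle h,\varphi_1\rangle\ne 0$, one has $J_h(\tau\varphi_1)=-\tau\langle h,\varphi_1\rangle\to-\infty$ along the appropriate direction of $\tau$, producing points with arbitrarily negative energy; together with the local minimum from the previous step this sets up a Mountain Pass geometry. After verifying the Palais--Smale condition using the compactness furnished by Lemma~\ref{II.le.compact.imb} and the decay hypothesis on $K$, the Mountain Pass Theorem yields a second critical point distinct from the first.

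The chief obstacle is the lack of coercivity of $J_h$ in the $\varphi_1$-direction: the improved Poincaré inequality controls only $|\tau|^{p-2}$ times a weighted norm of $u^\bot$, so the scenario $|\tau_n|\to\infty$ must be ruled out or otherwise exploited. The assumptions $\textup{(H)}$, $\textup{(W)}$ and $\displaystyle\lim_{\rho\to\infty}\operatorname{ess\ sup}_{r\geq\rho}r^pK(r)=0$ are precisely what is needed to secure the compact embeddings of Lemma~\ref{II.le.compact.imb}, which enter both in the direct-method step (to identify the limit of a minimizing sequence in the regime $|\tau_n|\to\infty$) and in the verification of the (PS) condition for the Mountain Pass argument. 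The perturbation step for the first solution away from $h=h^\ast$ is likewise delicate, since the degeneracy is concentrated in the one-dimensional subspace $\operatorname{span}(\varphi_1)$.
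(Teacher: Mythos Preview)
Your part~(i) is fine and matches the paper's approach (direct methods on $X^\bot$ via Proposition~\ref{II.Prop.Poincare}(i)).

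For part~(ii) your strategy differs from the paper's and has two real gaps.

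\textbf{The sub-case $|\tau_n|\to\infty$.} You correctly observe that along a minimizing sequence $u_n=\tau_n\varphi_1+u_n^\bot$ for $J_{h^\ast}$, the improved Poincar\'e inequality bounds $\|u_n^\bot\|$ and, if $|\tau_n|\to\infty$, forces $\|u_n^\bot\|_{\mathcal{D}_{\varphi_1}}\to 0$. But you then claim this ``permits extraction of a minimizer $u_0\in X$'', which is impossible: if $|\tau_n|\to\infty$ then $\|u_n\|\to\infty$ and no subsequence converges weakly in $X$. The way to close this is to \emph{exclude} the sub-case: since $h^\ast\in\mathcal{D}_{\varphi_1}^\ast$, the convergence $\|u_n^\bot\|_{\mathcal{D}_{\varphi_1}}\to 0$ gives $\langle h^\ast,u_n^\bot\rangle\to 0$, hence $J_{h^\ast}(u_n)\geq -\langle h^\ast,u_n^\bot\rangle\to 0$; but choosing $\phi\in C_c^1(B_1^c)$ with $\langle h^\ast,\phi\rangle>0$ (density of $C_c^1$ in $\mathcal{D}_{\varphi_1}$) gives $J_{h^\ast}(t\phi)<0$ for small $t>0$, so $\inf_X J_{h^\ast}<0$, a contradiction. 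You neither establish $\inf_X J_{h^\ast}<0$ nor invoke $h^\ast\in\mathcal{D}_{\varphi_1}^\ast$ at this point, so the step is incomplete as written.

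\textbf{The perturbation step.} Your ``localized minimization of $J_h$ on a fixed closed ball around $u_0$'' produces an interior critical point only if $J_{h^\ast}(u_0)<\inf_{\partial B(u_0,r)}J_{h^\ast}$ for some $r>0$, i.e.\ only if $u_0$ is a \emph{strict} local minimizer. A ``definite-sign second variation in the $X^\bot$-direction'' says nothing about the $\varphi_1$-direction; you have not excluded a continuum of minimizers, and once $\langle h,\varphi_1\rangle\neq 0$ the linear drift $-\tau\langle h,\varphi_1\rangle$ can push the constrained minimum onto $\partial B(u_0,r)$. The paper avoids this entirely: following \cite{Drabek.EJDE2002}, it builds from the outset a bounded cylinder
\[
D=\{\tau\varphi_1+u^\bot:\ |\tau|\leq T,\ \|u^\bot\|\leq R\}
\]
and uses the improved Poincar\'e inequality~\eqref{IV.Poincare} together with $J_{h^\ast}(t\phi)<0$ to obtain $\inf_{D}J_{h^\ast}<\inf_{\partial D}J_{h^\ast}$ directly. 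Because $D$ is bounded, this strict gap survives for all $h\in B_{X^\ast}(h^\ast,\rho)$ with $\rho$ small enough, and the minimum of $J_h$ over $D$ is interior, hence a free critical point. This explicit cylinder-with-gap construction is the mechanism your argument is missing.

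A minor point: the Palais--Smale condition for the Mountain Pass step (Lemma~\ref{V.le.PS}) relies on $\langle h,\varphi_1\rangle\neq 0$ and the compact embedding $X\hookrightarrow\hookrightarrow L^p(B_1^c;w)$, not on Lemma~\ref{II.le.compact.imb}.
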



\begin{remark}\rm
	(i) Recall that problem \eqref{1.1} with $\lambda=\lambda_1$ and $h=0$ is the eigenvalue problem \eqref{II.eigen} with $\lambda=\lambda_1$ and hence, all its solutions are of the form $u=\kappa\varphi_1$ $(\kappa\in\mathbb{R}).$

	(ii) Clearly, $u\mapsto \int_{B_1^c}K\varphi_1^{p-1}u\diff x$ is a linear bounded functional on $X$ thanks to the estimate
	$$\left|\int_{B_1^c}K\varphi_1^{p-1}u\diff x\right|\leq \left(\int_{B_1^c}w\varphi_1^p\diff x\right)^{\frac{p-1}{p}}\left(\int_{B_1^c}w|u|^p\diff x\right)^{\frac{1}{p}}\leq C^p_{\operatorname{emb}}\|\varphi_1\|^{p-1}\|u\|,$$
	where $C_{\operatorname{emb}}$ is an embedding constant for  $X\hookrightarrow L^p(B_1^c;w),$ i.e., $\|u\|_{L^p(B_1^c;w)}\leq C_{\operatorname{emb}}\|u\|$ for all $u\in X.$  We identify this functional with $K\varphi_1^{p-1}$ and thus,  $h=h^\ast+\xi K\varphi_1^{p-1}\in  B_{X^{\ast}}(h^\ast,\rho)$ for $|\xi|<\frac{\rho}{C^p_{\operatorname{emb}}\|\varphi_1\|^{p-1}}.$

\end{remark}


\subsection{Auxiliary lemmas}
Since we only deal with the resonant case, 
hereafter we always assume that $\lambda=\lambda_1$ in our arguments. For each $h\in X^\ast$ we denote the energy functional of problem \eqref{1.1} by
$$J_{h}(u):=\frac{1}{p}\int_{B_1^c}|\nabla u|^p\diff x-\frac{\lambda_1}{p}\int_{B_1^c}K(x)|u|^p\diff x-\langle h,u\rangle.$$
This functional is well-defined and belongs to $C^1(X,\mathbb{R})$ with
$$\langle J_h'(u),v\rangle=\int_{B_1^c}|\nabla u|^{p-2}\nabla u\cdot\nabla v\diff x-\lambda_1\int_{B_1^c}K(x)|u|^{p-2}uv\diff x-\langle h,v\rangle,\quad\forall v\in X.$$ 
Clearly, a critical point of $J_{h}$ is a (weak) solution of \eqref{1.1} with $\lambda=\lambda_1.$ We first note that if $\langle h,\varphi_1\rangle\ne 0$ then $J_h$ satisfies the Palais-Smale condition (the  $(PS)$ condition, for short) as shown in the following lemma. 

\begin{lemma}\label{V.le.PS}
	Assume that	$\langle h,\varphi_1\rangle\ne 0$ then $J_h$ satisfies the  $(PS)$ condition for all $1<p<\infty.$
\end{lemma}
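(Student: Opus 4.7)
The plan is the standard two-stage argument for Palais--Smale sequences in resonance problems: first establish boundedness of the sequence in $X$, then upgrade weak to strong convergence using the $(S_+)$ property of $-\Delta_p$ on $X$ together with the compact embedding $X\hookrightarrow\hookrightarrow L^p(B_1^c;w)$. The condition $\langle h,\varphi_1\rangle\ne 0$ enters only in the boundedness step.

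Let $\{u_n\}\subset X$ satisfy $J_h(u_n)=O(1)$ and $J_h'(u_n)\to 0$ in $X^\ast$. Suppose toward a contradiction that $\|u_n\|\to\infty$ along a subsequence, and set $v_n:=u_n/\|u_n\|$. Multiplying $pJ_h(u_n)=O(1)$ and subtracting $\langle J_h'(u_n),u_n\rangle=o(\|u_n\|)$ causes the $p$-th order terms to cancel, leaving $(p-1)\langle h,u_n\rangle=O(1)+o(\|u_n\|)$, hence $\langle h,v_n\rangle\to 0$. By reflexivity and the compact embedding, up to a subsequence $v_n\rightharpoonup v$ in $X$ and $v_n\to v$ in $L^p(B_1^c;w)$, so $\langle h,v\rangle=0$.

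To identify $v$, I would test $\langle J_h'(u_n),\cdot\rangle/\|u_n\|^{p-1}\to 0$ against $\phi=v_n-v$. The $h$-term is of order $\|u_n\|^{-(p-1)}$ and vanishes, while
\[
\lambda_1\int_{B_1^c}K|v_n|^{p-2}v_n(v_n-v)\diff x\to 0
\]
by H\"older with weight $w\geq|K|$ and the strong convergence in $L^p(B_1^c;w)$. Consequently $\int_{B_1^c}|\nabla v_n|^{p-2}\nabla v_n\cdot\nabla(v_n-v)\diff x\to 0$, and the $(S_+)$ property of $-\Delta_p$ on $X$ yields $v_n\to v$ strongly in $X$, in particular $\|v\|=1$. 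Passing to the limit in the equation for $v_n$ tested against arbitrary $\phi\in X$, $v$ is a nontrivial solution of \eqref{II.eigen} with $\lambda=\lambda_1$, so by the simplicity of $\lambda_1$, $v=\pm\varphi_1/\|\varphi_1\|$. Then $\langle h,v\rangle=0$ forces $\langle h,\varphi_1\rangle=0$, contradicting the hypothesis. Thus $\{u_n\}$ is bounded in $X$.

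With boundedness secured, I would extract a subsequence $u_n\rightharpoonup u$ in $X$ with $u_n\to u$ in $L^p(B_1^c;w)$, and then test $\langle J_h'(u_n),u_n-u\rangle\to 0$. Both the $h$-contribution (by weak convergence) and the weighted term (by the same H\"older/compactness argument) vanish, leaving $\int_{B_1^c}|\nabla u_n|^{p-2}\nabla u_n\cdot\nabla(u_n-u)\diff x\to 0$, and the $(S_+)$ property then delivers $u_n\to u$ in $X$. The main obstacle I anticipate is the boundedness step: at resonance the leading-order terms of $J_h(u_n)$ and of $\langle J_h'(u_n),u_n\rangle$ cancel, so the only remaining control on $\|u_n\|$ is furnished by $\langle h,u_n\rangle$, and extracting a contradiction requires \emph{strong} (not merely weak) convergence of the normalized sequence $v_n$ to an eigenfunction associated with $\lambda_1$. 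It is precisely to secure this strong convergence that the $(S_+)$ argument must be deployed already at the normalization stage.
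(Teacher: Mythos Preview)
Your argument is correct, but the route at the boundedness stage differs from the paper's. The paper does \emph{not} invoke the $(S_+)$ property for the normalized sequence $v_n$. Instead, dividing $J_h(u_n)=O(1)$ by $\|u_n\|^p$ and using weak lower semicontinuity of the norm together with the strong convergence $v_n\to v$ in $L^p(B_1^c;w)$ yields
\[
\frac{1}{p}\int_{B_1^c}|\nabla v|^p\diff x-\frac{\lambda_1}{p}\int_{B_1^c}K(x)|v|^p\diff x\le 0,
\]
which forces $v=\kappa\varphi_1$ by the variational characterization of $\lambda_1$. That $\kappa\ne 0$ then follows from the same identity (using $\|v_n\|=1$), giving $1=\lambda_1\int K|v|^p$. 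Thus your remark that ``extracting a contradiction requires \emph{strong} convergence of $v_n$'' and that ``the $(S_+)$ argument must be deployed already at the normalization stage'' is not accurate: the paper bypasses this entirely via the minimizing property of $\lambda_1$. Your approach---upgrading to $v_n\to v$ strongly via $(S_+)$ and then passing to the limit in the Euler--Lagrange equation to identify $v$ as an eigenfunction---is perfectly valid and arguably more robust (it would work even if $\lambda_1$ were not the first eigenvalue), but it uses more machinery than necessary here. In the second stage (bounded $u_n$) the two proofs coincide in spirit: the paper writes out the convexity inequality and uses uniform convexity of $X$, which is exactly the content of the $(S_+)$ property you cite.
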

\begin{proof}
	The proof is standard. For the reader's convenience we stress it here in our functional setting. Let $c$ be an arbitrary real number. Let $\{u_n\}$ be a $(PS)_c$ sequence in $X$ for $J_h,$ i.e., $J_h(u_n)\to c$ and $J_h'(u_n)\to 0$ as $n\to\infty.$ We first claim that $\{u_n\}$ is bounded in $X$. If this is not the case we may assume that $\|u_n\|\to\infty$ as $n\to\infty.$ Then as $n \to 
	\infty$, we have
	$$-pJ_h(u_n)+\langle J'_h(u_n),u_n\rangle=o(\|u_n\|),$$
	i.e.,
	\begin{equation}\label{V.le1.h}
	(p-1)\langle h,u_n\rangle=o(\|u_n\|).
	\end{equation}
	Set $v_n:=\frac{u_n}{\|u_n\|}$, then up to a subsequence
	\begin{equation}\label{V.le1.lim}
	\begin{cases}
	v_n\rightharpoonup v\ \text{in}\ X,\\
	v_n\to v\ \text{in}\ L^p(B_1^c;w).
	\end{cases}
	\end{equation} 	
	Combining this and \eqref{V.le1.h} we get
	\begin{equation}\label{V.le1.h(v)=0}
	\langle h,v\rangle=0.
	\end{equation}
	Notice that, as $n \to \infty$, we obtain
	\begin{equation}\label{V.le1.eq.vn}
	\frac{1}{p}\int_{B_1^c}|\nabla v_n|^p\diff x-\frac{\lambda_1}{p}\int_{B_1^c}K(x)|v_n|^p\diff x-\frac{1}{\|u_n\|^{p-1}}\langle h,v_n\rangle=\frac{o(\|u_n\|)}{\|u_n\|^p}.
	\end{equation}
	From this and \eqref{V.le1.lim}, we deduce
	$$\frac{1}{p}\int_{B_1^c}|\nabla v|^p\diff x-\frac{\lambda_1}{p}\int_{B_1^c}K(x)|v|^p\diff x\leq 0.$$
	Thus, $v=\kappa\varphi_1$ for some $\kappa\in \mathbb{R}.$ Letting $n\to\infty$ in \eqref{V.le1.eq.vn} and also noticing, $\|v_n\|=1$ for all $n$ we get
	$$1=\lambda_1\int_{B_1^c}K(x)|v|^p\diff x.$$
	Therefore, $\kappa\ne 0$ and then \eqref{V.le1.h(v)=0} gives $\langle h,\varphi_1\rangle=0,$ a contradiction. So $\{u_n\}$ is bounded in $X$. Up to a subsequence we have 
	\begin{equation*}
	\begin{cases}
	u_n\rightharpoonup u\ \text{in}\ X,\\
	u_n\to u\ \text{in}\ L^p(B_1^c;w).
	\end{cases}
	\end{equation*}
	From this, we obtain
	\begin{align*}
	\int_{B_1^c}|&\nabla u_n|^{p-2}\nabla u_n\cdot\nabla(u_n-u)\diff x\\
	&=\langle J_h'(u_n),u_n-u\rangle+\lambda_1\int_{B_1^c}K(x)|u_n|^{p-2}u_n(u_n-u)\diff x+\langle h,u_n-u\rangle\to 0,
	\end{align*}
	as $n\to\infty.$ Since
	$$\int_{B_1^c}|\nabla u_n|^p\diff x\leq p\int_{B_1^c}|\nabla u_n|^{p-2}\nabla u_n\cdot\nabla(u_n-u)\diff x+\int_{B_1^c}|\nabla u|^p\diff x,$$
	we obtain from the last limit and the weak lower semicontinuity of $\|\cdot\|$ on $X$, that
	$$\|u\|\leq \liminf_{n\to\infty}\|u_n\|\leq\limsup_{n\to\infty}\|u_n\|\leq \|u\|.$$
	Thus, $\displaystyle \lim_{n\to\infty}\|u_n\|=\|u\|$. Combining this with the weak convergence of $\{u_n\}$ in $X$, noticing that $X$ is a uniformly convex Banach space, we deduce $u_n\to u$ in $X$ as $n\to\infty.$
\end{proof}
For each $(\tau,h)\in \mathbb{R}\times X^\ast,$ define
$$j(\tau;h):=\inf_{u^{\bot} \in X^{\bot}}J_{h}(\tau\varphi_1+u^{\bot}).$$
To show this infimum is attained at some $u^{\bot}_{\tau,h}\in X^{\bot}$, we need the following lemma.


\begin{lemma}\label{V.le.coercive}
	Let $1<p<\infty$. Then for each $T>0,$ there exist $\alpha_T,\beta_T>0$ such that
	$$	\int_{B_1^c}|\tau\nabla\varphi_1+\nabla u^{\bot}|^p\diff x-\lambda_1	\int_{B_1^c}K(x)|\tau\varphi_1+u^{\bot}|^p\diff x\geq \alpha_T \int_{B_1^c}|\nabla u^{\bot}|^p\diff x-\beta_T$$
	for all $|\tau|\leq T$ and all $u^{\bot}\in X^{\bot}.$
\end{lemma}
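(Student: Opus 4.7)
The plan is a contradiction argument mirroring the standard boundedness step in Palais--Smale verifications. Suppose the conclusion fails. Then there exist $T>0$ and sequences $\{\tau_n\}\subset [-T,T]$, $\{u_n^\bot\}\subset X^\bot$ with
\[
\int_{B_1^c}|\tau_n\nabla\varphi_1+\nabla u_n^{\bot}|^p\diff x-\lambda_1\int_{B_1^c}K(x)|\tau_n\varphi_1+u_n^{\bot}|^p\diff x < \frac{1}{n}\|u_n^\bot\|^p-n.
\]
First I would rule out that $\{u_n^\bot\}$ is bounded in $X$: if so, the left-hand side is bounded below (the first integral is non-negative, and the second is controlled via $|K|\le w$ and the embedding $X\hookrightarrow L^p(B_1^c;w)$), whereas the right-hand side tends to $-\infty$, a contradiction. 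So $\|u_n^\bot\|\to\infty$.

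Next I would normalize. Setting $w_n:=u_n^\bot/\|u_n^\bot\|$ and $\sigma_n:=\tau_n/\|u_n^\bot\|$, so $\sigma_n\to 0$ and $\|w_n\|=1$, and dividing by $\|u_n^\bot\|^p$ gives
\[
\int_{B_1^c}|\sigma_n\nabla\varphi_1+\nabla w_n|^p\diff x-\lambda_1\int_{B_1^c}K(x)|\sigma_n\varphi_1+w_n|^p\diff x<\frac{1}{n}-\frac{n}{\|u_n^\bot\|^p}.
\]
Since $X$ is reflexive and the embedding $X\hookrightarrow\hookrightarrow L^p(B_1^c;w)$ is compact, I pass to a subsequence with $w_n\rightharpoonup w$ in $X$ and $w_n\to w$ in $L^p(B_1^c;w)$; because $X^\bot$ is weakly closed, $w\in X^\bot$. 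Writing $\tilde w_n:=\sigma_n\varphi_1+w_n$, we have $\tilde w_n-w_n\to 0$ in $X$ since $\sigma_n\to 0$, hence $\tilde w_n\rightharpoonup w$ in $X$ and $\tilde w_n\to w$ in $L^p(B_1^c;w)$ as well.

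Taking $\liminf_n$ using weak lower semicontinuity of $\|\nabla\cdot\|_p$ for the first term and strong convergence together with the domination $|K|\le w$ for the second (and noting that the right-hand side tends to a non-positive value), I obtain
\[
\int_{B_1^c}|\nabla w|^p\diff x-\lambda_1\int_{B_1^c}K(x)|w|^p\diff x\le 0.
\]
By the variational characterization of $\lambda_1$ and its simplicity, $w=k\varphi_1$ for some $k\in\mathbb{R}$; since $w\in X^\bot$ and $\int_{B_1^c}K\varphi_1^{p-1}\varphi_1\diff x=\lambda_1^{-1}\neq 0$, we must have $k=0$, i.e.\ $w=0$. Then $\int_{B_1^c}K|\tilde w_n|^p\diff x\to 0$, and combining with the displayed inequality yields $\limsup_n\int_{B_1^c}|\nabla\tilde w_n|^p\diff x\le 0$, so $\tilde w_n\to 0$ in $X$ and hence $w_n\to 0$ in $X$, contradicting $\|w_n\|=1$.

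The delicate point is the interplay between the two quadratic-type terms after dividing by $\|u_n^\bot\|^p$: one needs the compact embedding into $L^p(B_1^c;w)$ to pass from weak to strong convergence in the $K$-integral, the weak closedness of $X^\bot$ to preserve the orthogonality of the limit, and the sign of $1/n-n/\|u_n^\bot\|^p$ in the limit to produce the negativity that forces $w\in\mathrm{span}\{\varphi_1\}$; all other manipulations are routine.
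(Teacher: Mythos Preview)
Your proof is correct and follows essentially the same contradiction-plus-normalization route as the paper: assume failure, force $\|u_n^\bot\|\to\infty$, rescale, pass to a weak limit $w\in X^\bot$, use weak lower semicontinuity and simplicity of $\lambda_1$ to conclude $w=0$, and then derive a contradiction. The only cosmetic difference is in the closing step: the paper applies the elementary inequality $|a+b|^p\ge 2^{1-p}|b|^p-|a|^p$ to the gradient term to force $\int K|w|^p>0$, whereas you feed $\int K|\tilde w_n|^p\to 0$ back into the rescaled inequality to get $\|\tilde w_n\|\to 0$; both yield the same contradiction with $\|w_n\|=1$.
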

\begin{proof}
	Suppose by contradiction that for each $n\in \mathbb{N},$ there are $\tau_n\in [-T,T]$ and $u_n^{\bot}\in X^{\bot}$ such that
	\begin{equation}\label{V.coercive.ineq1}
	\int_{B_1^c}|\tau_n\nabla\varphi_1+\nabla u_n^{\bot}|^p\diff x-\lambda_1	\int_{B_1^c}K(x)|\tau_n\varphi_1+u_n^{\bot}|^p\diff x< \frac{1}{n} \int_{B_1^c}|\nabla u_n^{\bot}|^p\diff x-n.
	\end{equation}
	This yields, $\|u_n^{\bot}\|>n^{\frac{2}{p}}$ for all $n\in\mathbb{N}$ and hence, $\|u_n^{\bot}\|\to\infty$ as $n\to\infty.$ Moreover, \eqref{V.coercive.ineq1} implies
	\begin{equation}\label{V.coercive.ineq2}
	\int_{B_1^c}\left|\frac{\tau_n}{\|u_n^{\bot}\|}\nabla\varphi_1+\nabla \widetilde{u}_n^{\bot}\right|^p\diff x-\lambda_1	\int_{B_1^c}K(x)\left|\frac{\tau_n}{\|u_n^{\bot}\|}\varphi_1+\widetilde{u}_n^{\bot}\right|^p\diff x< \frac{1}{n},
	\end{equation}
	where $	\widetilde{u}_n^{\bot}:=\frac{u^{\bot}_n}{\|u_n^{\bot}\|}\ (n=1,2,\cdots).$ Up to a subsequence, $	\widetilde{u}_n^{\bot}\rightharpoonup \widetilde{u}^{\bot}\in X^{\bot}$ in $X$ as $n\to\infty$ and hence, $\frac{\tau_n}{\|u_n^{\bot}\|}\varphi_1+\widetilde{u}_n^{\bot}\rightharpoonup \widetilde{u}^{\bot}$ in $X$ and $\frac{\tau_n}{\|u_n^{\bot}\|}\varphi_1+\widetilde{u}_n^{\bot}\to \widetilde{u}^{\bot}$ in $L^p(B_1^c;w)$ as $n\to\infty.$ From this, by passing to the limit as $n\to\infty$ in \eqref{V.coercive.ineq2} and recalling the weak lower semicontinuity of norm, we get
	$$\int_{B_1^c}\left|\nabla \widetilde{u}^{\bot}\right|^p\diff x-\lambda_1	\int_{B_1^c}K(x)\left|\widetilde{u}^{\bot}\right|^p\diff x\leq 0.$$
	Thus $\widetilde{u}^{\bot}=\kappa \varphi_1$ for some $\kappa\in\mathbb{R}$ and hence, $\widetilde{u}^{\bot}=0$ since $\widetilde{u}^{\bot}\in X^{\bot}.$  Meanwhile, we have
	\begin{align*}
	\int_{B_1^c}\left|\frac{\tau_n}{\|u_n^{\bot}\|}\nabla\varphi_1+\nabla \widetilde{u}_n^{\bot}\right|^p\diff x&\geq \frac{1}{2^{p-1}}\int_{B_1^c}\left|\nabla \widetilde{u}_n^{\bot}\right|^p\diff x-\frac{|\tau_n|^p}{\|u_n^{\bot}\|^p}\int_{B_1^c}|\nabla\varphi_1|^p\diff x\\&=\frac{1}{2^{p-1}}-\frac{|\tau_n|^p}{\|u_n^{\bot}\|^p}\lambda_1.
	\end{align*}
	Combining this with the facts that
	$\frac{\tau_n}{\|u_n^{\bot}\|}\to 0$ in $\mathbb{R}$, $\frac{\tau_n}{\|u_n^{\bot}\|}\varphi_1+\widetilde{u}_n^{\bot}\to \widetilde{u}^{\bot}$ in $L^p(B_1^c;w)$ as $n\to\infty$, and using \eqref{V.coercive.ineq2}, we obtain 
	$\int_{B_1^c}K(x)\left|\widetilde{u}^{\bot}\right|^p\diff x>0,$
	a contradiction. So we have just proved Lemma~\ref{V.le.coercive}.	
\end{proof}
\begin{remark}\rm
	
We point out that we can prove Lemma~\ref{V.le.coercive} also using the fact that for $0<\gamma\leq\infty,$ we have 
\begin{equation*}\label{IV.Lambda_gamma}
\Lambda_\gamma:=\inf\left\{\int_{B_1^c}|\nabla u|^p\diff x: u\in \mathcal{C}'_\gamma\setminus\{0\}, \int_{B_1^c} K(x)|u|^p\diff x=1 \right\}>\lambda_1, \end{equation*}
where $\mathcal{C}'_\gamma:=\{u=\tau\varphi_1+u^{\bot}: \tau\in\mathbb{R}, u^{\bot}\in X^{\bot}, \|u^{\bot}\|\geq \gamma|\tau|\}$ when $\gamma\in (0,\infty)$ and $\mathcal{C}'_\infty:=X^{\bot}$ (see \cite[Lemma 6.2 and Subsection 8.2]{Alziary}). However, here we provided a direct proof without using the argument on cones as in \cite{Alziary}. 
\end{remark}

 By Lemma~\ref{V.le.coercive}, it is easy to see that for each $(\tau,h)\in\mathbb{R}\times X^\ast$, the functional $u^{\bot}\mapsto J_h(\tau\varphi_1+u^{\bot})$ is coercive on $X^{\bot}$. Moreover, this functional is weak lower semicontinuous on $X^{\bot}$, so it achieves a global minimum on $X^{\bot}$ at some $u^{\bot}_{\tau,h}\in X^{\bot},$ that is
\begin{equation}\label{V.form_j}
J_h(\tau\varphi_1+u^{\bot}_{\tau,h})=\inf_{u^{\bot}\in X^{\bot}}J_h(\tau\varphi_1+u^{\bot})=j(\tau;h).
\end{equation}
When $h=h^\ast$ with a fixed $h^\ast\in X^\ast$ satisfying $\langle h^\ast,\varphi_1\rangle=0$, we write $u^{\bot}_\tau$ instead of $u^{\bot}_{\tau,h^\ast}.$ If $1<p<2$ and 
$K\in L^\infty(A_1^R)$ for all $R>1,$ then by Proposition~\ref{III.Prop.saddle}, we get
\begin{equation}\label{V.lim_j}
\lim_{|\tau|\to\infty}j(\tau;h^\ast)=-\infty.
\end{equation}
In the next lemma, we stress a behavior of $u^{\bot}_{\tau,h}$ and $j(\tau;h^\ast)$ as $|\tau|\to\infty.$


\begin{lemma}\label{V.le.behavior}
	Let $1<p<2$ and let 
	$K\in L^\infty(A_1^R)$ for all $R>1.$ Then for every $h\in X^\ast,$ we have
	$$\limsup_{|\tau|\to\infty}\|\frac{u^{\bot}_{\tau,h}}{\tau}\|<\infty\ \text{and}\ \lim_{|\tau|\to\infty}\frac{j(\tau;h^\ast)}{\tau}=0.$$
\end{lemma}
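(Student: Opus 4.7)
The plan is to derive both conclusions from the single inequality obtained by comparing the minimizer $u^{\bot}_{\tau,h}$ against the trivial test function $v=0\in X^{\bot}$. Since $\varphi_1$ realizes the infimum defining $\lambda_1$, the functional
$$\Phi(u):=\frac{1}{p}\int_{B_1^c}|\nabla u|^p\diff x-\frac{\lambda_1}{p}\int_{B_1^c}K(x)|u|^p\diff x$$
is nonnegative on $X$ and vanishes on $\mathbb{R}\varphi_1$. Using $J_h(\tau\varphi_1+u^{\bot}_{\tau,h})\leq J_h(\tau\varphi_1)=-\tau\langle h,\varphi_1\rangle$ and rearranging yields
$$0\leq \Phi(\tau\varphi_1+u^{\bot}_{\tau,h})\leq \langle h,u^{\bot}_{\tau,h}\rangle\leq \|h\|_{X^\ast}\|u^{\bot}_{\tau,h}\|.$$
Because $\Phi$ is $p$-homogeneous and $u^{\bot}_{\tau,h}=\tau w_\tau$ with $w_\tau\in X^{\bot}$, dividing by $|\tau|^p$ gives the master estimate
\begin{equation}\label{V.le.behavior.key}
0\leq \Phi(\varphi_1+w_\tau)\leq \frac{\|h\|_{X^\ast}\|w_\tau\|}{|\tau|^{p-1}}.
\end{equation}

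For the first conclusion I argue by contradiction: suppose $\|w_{\tau_n}\|\to\infty$ along a sequence with $|\tau_n|\to\infty$, and normalize $\widetilde{w}_n:=w_{\tau_n}/\|w_{\tau_n}\|\in X^{\bot}$. Dividing \eqref{V.le.behavior.key} by $\|w_{\tau_n}\|^p$ forces $\Phi(\varphi_1/\|w_{\tau_n}\|+\widetilde{w}_n)\to 0$. Passing to a subsequence with $\widetilde{w}_n\rightharpoonup \widetilde{w}\in X^{\bot}$ in $X$ and $\widetilde{w}_n\to \widetilde{w}$ in $L^p(B_1^c;w)$ via the compact embedding, weak lower semicontinuity of the gradient term together with the convergence of the $K$-weighted term yield $\Phi(\widetilde{w})\leq 0$. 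The variational characterization of $\lambda_1$ then forces $\widetilde{w}\in\mathbb{R}\varphi_1$, and $\widetilde{w}\in X^{\bot}$ gives $\widetilde{w}=0$; consequently both ingredients of $\Phi(\varphi_1/\|w_{\tau_n}\|+\widetilde{w}_n)$ vanish in the limit, so $\varphi_1/\|w_{\tau_n}\|+\widetilde{w}_n\to 0$ in $X$, contradicting $\|\widetilde{w}_n\|=1$. Hence $\limsup_{|\tau|\to\infty}\|w_\tau\|<\infty$.

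Once $\|w_\tau\|$ is bounded, \eqref{V.le.behavior.key} strengthens to $\Phi(\varphi_1+w_\tau)\to 0$, and rerunning the extraction without renormalization produces $w_\tau\rightharpoonup 0$ in $X$ and $\int_{B_1^c}|\nabla(\varphi_1+w_\tau)|^p\diff x\to \int_{B_1^c}|\nabla\varphi_1|^p\diff x$; uniform convexity of $X$ then upgrades weak to strong convergence, giving the sharper $w_\tau\to 0$ in $X$. The second conclusion follows at once: writing $j(\tau;h^\ast)=\Phi(\tau\varphi_1+u^{\bot}_\tau)-\langle h^\ast,u^{\bot}_\tau\rangle$ and combining $\Phi\geq 0$ with $j(\tau;h^\ast)\leq J_{h^\ast}(\tau\varphi_1)=0$ gives
$$-\|h^\ast\|_{X^\ast}\|u^{\bot}_\tau\|\leq j(\tau;h^\ast)\leq 0,$$
whence dividing by $\tau$ and using $\|u^{\bot}_\tau\|/|\tau|=\|w_\tau\|\to 0$ yields $j(\tau;h^\ast)/\tau\to 0$. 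The main obstacle is precisely this last sharpening from boundedness of $\|w_\tau\|$ to $w_\tau\to 0$ in $X$: it is indispensable for the limit statement and depends crucially on coupling the compact embedding $X\hookrightarrow\hookrightarrow L^p(B_1^c;w)$ (to control the $K$-weighted term) with the uniform convexity of $X$ (to promote weak convergence plus norm convergence to strong convergence).
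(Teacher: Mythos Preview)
Your proof is correct and takes a genuinely different, more economical route than the paper's. The paper argues in three stages: it first proves $\limsup_{|\tau|\to\infty}\|u^{\bot}_\tau/\tau\|<\infty$ for the special functional $J_{h^\ast}$, using in particular that $j(\tau;h^\ast)\to-\infty$ (which in turn rests on the saddle-point Proposition~\ref{III.Prop.saddle} and hence on $h^\ast\in X^\ast_Y$ and $K\in L^\infty(A_1^R)$); it then bootstraps to general $h$ via the comparison $J_h(\tau\varphi_1+u^{\bot}_{\tau,h})\leq J_h(\tau\varphi_1+u^{\bot}_\tau)$; finally it shows $\langle h,u^{\bot}_{\tau,h}/|\tau|\rangle\to 0$ by yet another contradiction argument before the concluding sandwich. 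Your single ``master estimate'' \eqref{V.le.behavior.key} bypasses all of this: testing against $0\in X^{\bot}$ gives the inequality for \emph{every} $h$ at once, and the only tools you invoke are the compact embedding $X\hookrightarrow\hookrightarrow L^p(B_1^c;w)$, simplicity of $\lambda_1$, and uniform convexity of $X$. In fact your argument never uses $1<p<2$, nor the hypothesis $K\in L^\infty(A_1^R)$, nor the implicit assumption $h^\ast\in X^\ast_Y$ that underlies \eqref{V.lim_j}; you also obtain the sharper conclusion $u^{\bot}_\tau/\tau\to 0$ \emph{strongly} in $X$, whereas the paper extracts this only along subsequences and only weakly (Remark~\ref{V.rmk.behavior}). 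One minor point of presentation: when passing from subsequential strong convergence to full convergence, make explicit the standard ``every subsequence has a further subsequence with the same limit'' argument, since the minimizer $u^{\bot}_{\tau,h^\ast}$ is merely a choice and $\tau$ ranges over $\mathbb{R}$.
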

\begin{proof}
	We first show that
	\begin{equation}\label{V.limsup1}
	\limsup_{|\tau|\to\infty}\|\frac{u_\tau^{\bot}}{\tau}\|<\infty.
	\end{equation} 
	If \eqref{V.limsup1} does not hold true, then we can find a sequence $\{\tau_n\}$ such that $|\tau_n|\to\infty$ and $\|\frac{u_{\tau_n}^{\bot}}{\tau_n}\|\to\infty$ as $n\to\infty,$ i.e.,  $\frac{|\tau_n|}{\|u_{\tau_n}^{\bot}\|}\to 0$ as $n\to\infty.$ Set $v_n:=\frac{u_{\tau_n}^{\bot}}{\|u_{\tau_n}^{\bot}\|}$ then up to a subsequence, we have
	\begin{equation*}
	\begin{cases}
	v_n\rightharpoonup v_0\ \text{in}\ X,\\
	v_n\to v_0\ \text{in}\ L^p(B_1^c;w),\\
	v_0\in X^{\bot}.
	\end{cases}
	\end{equation*}
	From this and \eqref{V.lim_j}, we deduce 
	\begin{align}\label{V1.est.int1}
	\frac{1}{p}\int_{B_1^c}|\nabla v_0|^p\diff x-&\frac{\lambda_1}{p}\int_{B_1^c}K(x)|v_0|^p\diff x\leq\underset{n\to\infty}{\lim\inf}\left\{\frac{1}{p}\int_{B_1^c}\left|\frac{\tau_n}{\|u_{\tau_n}^{\bot}\|}\nabla\varphi_1+\nabla v_n\right|^p\diff x\right.\notag\\
	&\left.-\frac{\lambda_1}{p}\int_{B_1^c}K(x)\left|\frac{\tau_n}{\|u_{\tau_n}^{\bot}\|}\varphi_1+v_n\right|^p\diff x-\frac{1}{\|u_{\tau_n}^{\bot}\|^{p-1}}\langle h^\ast,v_n\rangle\right\}\leq 0.
	\end{align}
	Thus, $v_0=\kappa\varphi_1$ for some $\kappa\in\mathbb{R}$ and hence $v_0=0$ since $v_0\in X^{\bot}.$ Meanwhile, arguing as in the proof of Lemma~\ref{V.le.coercive}, we obtain from \eqref{V1.est.int1} that
	$\int_{B_1^c}K(x)|v_0|^{p}\diff x>0,$ which is absurd. 
	Thus \eqref{V.limsup1} holds true. Next suppose that for some $h\in X^\ast,$ there exists a sequence $\{\tau_n'\}$ such that $|\tau_n'|\to\infty$ and $\|\frac{u_{\tau'_n,h}^{\bot}}{\tau'_n}\|\to\infty$ as $n\to\infty.$   Since
	$$J_h(\tau_n'\varphi_1+u_{\tau'_n,h}^{\bot}) \leq J_h(\tau_n'\varphi_1+u_{\tau'_n}^{\bot})=j(\tau_n';h^\ast)-\langle h-h^\ast,\tau_n'\varphi_1+u_{\tau'_n}^{\bot}\rangle, $$
	we deduce
	\begin{align}\label{V1.est.int2}
	&\frac{1}{p}\int_{B_1^c}\left|\frac{\tau'_n}{\|u_{\tau'_n,h}^{\bot}\|}\nabla\varphi_1+\nabla \widetilde{v}_n\right|^p\diff x-\frac{\lambda_1}{p}\int_{B_1^c}K(x)\left|\frac{\tau'_n}{\|u_{\tau'_n,h}^{\bot}\|}\varphi_1+\widetilde{v}_n\right|^p\diff x\notag\\
	&\quad-\frac{1}{\|u_{\tau'_n,h}^{\bot}\|^{p-1}}\left\langle h,\frac{\tau'_n}{\|u_{\tau'_n,h}^{\bot}\|}\varphi_1+\widetilde{v}_n\right\rangle+\frac{1}{\|u_{\tau'_n,h}^{\bot}\|^{p-1}}\left\langle h- h^\ast,\frac{\tau'_n}{\|u_{\tau'_n,h}^{\bot}\|}\varphi_1+\frac{\tau'_n}{\|u_{\tau'_n,h}^{\bot}\|}\frac{u_{\tau'_n}^{\bot}}{\tau'_n}\right\rangle\notag\\
	&\leq \frac{j(\tau_n';h^\ast)}{\|u_{\tau'_n,h}^{\bot}\|^p},\ \text{where}\ \widetilde{v}_n:=\frac{u^\bot_{\tau'_n,h}}{\|u^\bot_{\tau'_n,h}\|}.
	\end{align}
	Up to a subsequence, we have
	\begin{equation}\label{V.convergence1}
	\begin{cases}
	\widetilde{v}_n\rightharpoonup \widetilde{v}_0\ \text{in}\ X,\\
	\widetilde{v}_n\to \widetilde{v}_0\ \text{in}\ L^p(B_1^c;w),\\
	\widetilde{v}_0\in X^{\bot}.
	\end{cases}
	\end{equation}
	Letting $n\to\infty$ in \eqref{V1.est.int2},  and using \eqref{V.lim_j}, \eqref{V.limsup1}, \eqref {V.convergence1} and $\frac{|\tau_n'|}{\|u_{\tau_n',h}^{\bot}\|}\to 0$ as $n\to\infty$, we obtain
	$$\frac{1}{p}\int_{B_1^c}|\nabla \widetilde{v}_0|^p\diff x-\frac{\lambda_1}{p}\int_{B_1^c}K(x)|\widetilde{v}_0|^p\diff x\leq 0.$$
	Thus, $\widetilde{v}_0=\kappa'\varphi_1$ for some $\kappa'\in\mathbb{R}$, and hence $\widetilde{v}_0=0$ since $\widetilde{v}_0\in X^{\bot}.$ Arguing again as in the proof of Lemma~\ref{V.le.coercive}, we obtain from \eqref{V1.est.int2} that
	$\int_{B_1^c}K(x)|\widetilde{v}_0|^p\diff x>0,$ a contradiction. So we get
	\begin{equation}\label{V.limsup2}
	\limsup_{|\tau|\to\infty}\|\frac{u^{\bot}_{\tau,h}}{\tau}\|<\infty.
	\end{equation}
	
	Next, we show that
	\begin{equation}\label{V.lim_h}
	\exists\lim_{|\tau|\to\infty}\left\langle h,\frac{u_{\tau,h}^{\bot}}{|\tau|}\right\rangle=0.
	\end{equation}
	Suppose by contradiction that there exists a sequence $\{\widetilde{\tau}_n\}$ such that $|\widetilde{\tau}_n|\to \infty$ as $n\to\infty$ and $\underset{n\to\infty}{\lim\inf}|\langle h,\frac{u_{\widetilde{\tau}_n,h}^{\bot}}{\widetilde{\tau}_n}\rangle|>0$. Using \eqref{V.limsup2}, we deduce (up to a subsequence)
	\begin{equation}\label{V.convergence2}
	\frac{u_{\widetilde{\tau}_n,h}^{\bot}}{\widetilde{\tau}_n}\rightharpoonup u_0\ \ \text{in}\ \ X,\ \ \frac{u_{\widetilde{\tau}_n,h}^{\bot}}{\widetilde{\tau}_n}\to u_0\ \ \text{in}\ \ L^p(B_1^c;w),\ u_0\in X^{\bot},
	\end{equation}
	and
	\begin{equation}\label{V.h}
	0<\underset{n\to\infty}{\lim\inf}\left|\left\langle h,\frac{u_{\widetilde{\tau}_n,h}^{\bot}}{\widetilde{\tau}_n}\right\rangle\right|=|\langle h,u_0\rangle|.
	\end{equation}
	Since
	$$J_h(\widetilde{\tau}_n\varphi_1+u_{\widetilde{\tau}_n,h}^{\bot})+\langle h-h^\ast,\widetilde{\tau}_n\varphi_1+u_{\widetilde{\tau}_n}^{\bot}\rangle\leq j(\widetilde{\tau}_n;h^\ast), $$
	we deduce 
	\begin{align}\label{V.est.int3}
	\frac{1}{p}\int_{B_1^c}\left|\nabla\varphi_1+\nabla\left(\frac{u_{\widetilde{\tau}_n,h}^{\bot}}{\widetilde{\tau}_n}\right)\right|^p\diff x&-\frac{\lambda_1}{p}\int_{B_1^c}K(x)\left|\varphi_1+\frac{u_{\widetilde{\tau}_n,h}^{\bot}}{\widetilde{\tau}_n}\right|^p\diff x-\frac{\widetilde{\tau}_n}{|\widetilde{\tau}_n|^p}\left\langle h,\varphi_1+\frac{u_{\widetilde{\tau}_n,h}^{\bot}}{\widetilde{\tau}_n}\right\rangle\notag\\
	&+\frac{\widetilde{\tau}_n}{|\widetilde{\tau}_n|^p}\left\langle h-h^\ast,\varphi_1+\frac{u_{\widetilde{\tau}_n}^{\bot}}{\widetilde{\tau}_n}\right\rangle\leq \frac{j(\widetilde{\tau}_n;h^\ast)}{|\widetilde{\tau}_n|^p}.
	\end{align}
	Now, taking the limit $n\to\infty$ in \eqref{V.est.int3} and invoking \eqref{V.lim_j}, \eqref{V.limsup2} and \eqref{V.convergence2}, we get 
	$$\frac{1}{p}\int_{B_1^c}|\nabla\varphi_1+\nabla u_0|^p\diff x-\frac{\lambda_1}{p}\int_{B_1^c}K(x)|\varphi_1+u_0|^p\diff x\leq 0,$$
	and hence, $\varphi_1+u_0=\widetilde{\kappa}\varphi_1$  for some $\widetilde{\kappa}\in\mathbb{R}.$ Thus, $u_0=0$ due to the fact that $u_0\in X^{\bot}.$ This contradicts to \eqref{V.h} and hence, we obtain \eqref{V.lim_h}. Finally, the second conclusion of lemma follows from \eqref{V.lim_h} and the following estimate
	\begin{align*}
	-\left\langle h^\ast,\frac{u_{\tau}^{\bot}}{|\tau|}\right\rangle\leq\frac{1}{|\tau|}\biggl\{\frac{1}{p}\int_{B_1^c}|\tau\nabla\varphi_1&+\nabla u_\tau^{\bot}|^p\diff x-\frac{\lambda_1}{p}\int_{B_1^c}K(x)|\tau\varphi_1+u_\tau^{\bot}|^p\diff x\notag\\
	&-\langle h^\ast,\tau\varphi_1+u_\tau^{\bot}\rangle\biggr\}=\frac{j(\tau;h^\ast)}{|\tau|}\leq 0,\quad \text{for}\ |\tau|\ \text{large.}
	\end{align*}\end{proof}


\begin{remark}\rm\label{V.rmk.behavior}
	Let $1<p<2.$ From the arguments in the proof of Lemma~\ref{V.le.behavior}, it is easy to see that 
	for each $h\in X^\ast$ there are two sequences $\{\tau_n\}$ and $\{\tau'_n\}$ in $\mathbb{R}$, such that $\tau_n\to\infty$ and $\tau'_n\to-\infty$ as $n\to\infty$ and
	$$\frac{u_{\tau_n,h}^{\bot}}{\tau_n}\rightharpoonup 0, \frac{u_{\tau'_n,h}^{\bot}}{\tau'_n}\rightharpoonup 0\quad \text{in}\ X\ \text{as}\ n\to\infty.$$
\end{remark}

The next lemma provides the continuity of $j(\cdot;\cdot)$ on $\mathbb{R}\times X^\ast.$


\begin{lemma}\label{V.le.continuity}
	Let $1<p<\infty$. Then, $j(\cdot;\cdot): \mathbb{R}\times X^\ast \to \mathbb{R}$ is a continuous mapping.
\end{lemma}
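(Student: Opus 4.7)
Fix a sequence $(\tau_n,h_n)\to(\tau,h)$ in $\mathbb{R}\times X^\ast$. The plan is to establish the two inequalities $\limsup_{n\to\infty} j(\tau_n;h_n)\le j(\tau;h)\le \liminf_{n\to\infty} j(\tau_n;h_n)$ by combining Lemma~\ref{V.le.coercive} with the compactness of the embedding $X\hookrightarrow\hookrightarrow L^p(B_1^c;w)$ and weak lower semicontinuity of the $L^p$-norm of the gradient.

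For the \emph{upper bound}, let $\tilde u^{\bot}\in X^{\bot}$ be a minimizer realizing $j(\tau;h)$ (its existence follows from Lemma~\ref{V.le.coercive} together with the weak lower semicontinuity of $u^{\bot}\mapsto J_h(\tau\varphi_1+u^{\bot})$, exactly as for \eqref{V.form_j}). Then
$$j(\tau_n;h_n)\le J_{h_n}(\tau_n\varphi_1+\tilde u^{\bot}).$$
I would check termwise convergence of the right-hand side to $J_h(\tau\varphi_1+\tilde u^{\bot})=j(\tau;h)$: the gradient term is continuous in $\tau_n$ as an $L^p$-integral of a polynomial in $\tau_n$; the weighted term is handled by the Lebesgue dominated convergence theorem with dominant $w\,(|\tau|\varphi_1+|\tilde u^{\bot}|)^p$ up to a constant (using $|K|\le w$ and $X\hookrightarrow L^p(B_1^c;w)$); and the duality pairing is split as
$$\langle h_n,\tau_n\varphi_1+\tilde u^{\bot}\rangle-\langle h,\tau\varphi_1+\tilde u^{\bot}\rangle=\langle h_n-h,\tau_n\varphi_1+\tilde u^{\bot}\rangle+\langle h,(\tau_n-\tau)\varphi_1\rangle,$$
both terms tending to zero.

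For the \emph{lower bound}, let $u^{\bot}_n:=u^{\bot}_{\tau_n,h_n}\in X^{\bot}$ be the minimizer from \eqref{V.form_j}. First I would show $\{u^{\bot}_n\}$ is bounded in $X$: on one side, taking $u^{\bot}=0$ as a competitor gives $j(\tau_n;h_n)\le J_{h_n}(\tau_n\varphi_1)=-\tau_n\langle h_n,\varphi_1\rangle$ (since $\|\varphi_1\|^p=\lambda_1\int_{B_1^c}K\varphi_1^p\diff x$), which is bounded; on the other side, Lemma~\ref{V.le.coercive} applied with $T:=\sup_n|\tau_n|<\infty$ gives
$$J_{h_n}(\tau_n\varphi_1+u^{\bot}_n)\ge \frac{\alpha_T}{p}\|u^{\bot}_n\|^p-\frac{\beta_T}{p}-\|h_n\|_{X^\ast}\bigl(|\tau_n|\|\varphi_1\|+\|u^{\bot}_n\|\bigr),$$
so boundedness of $\{u^{\bot}_n\}$ follows from the superlinear growth in $\|u^{\bot}_n\|$. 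Up to a subsequence, $u^{\bot}_n\rightharpoonup u^{\bot}_\ast$ in $X$ with $u^{\bot}_\ast\in X^{\bot}$ (weak closedness) and $u^{\bot}_n\to u^{\bot}_\ast$ in $L^p(B_1^c;w)$. Weak lower semicontinuity of $\|\tau_n\nabla\varphi_1+\nabla u^{\bot}_n\|_p$ in the gradient term, strong convergence for the weighted $L^p$ term, and the splitting trick above for the duality pairing together yield
$$j(\tau;h)\le J_h(\tau\varphi_1+u^{\bot}_\ast)\le \liminf_{n\to\infty} J_{h_n}(\tau_n\varphi_1+u^{\bot}_n)=\liminf_{n\to\infty}j(\tau_n;h_n).$$

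Combining the two inequalities gives $j(\tau_n;h_n)\to j(\tau;h)$ along the original sequence by a standard subsequence argument. I expect the main technical point is the treatment of the duality pairing with a varying $h_n$: one needs that $\{u^{\bot}_n\}$ is bounded in $X$ (not merely in a weighted space) in order to control $|\langle h_n-h,u^{\bot}_n\rangle|\le\|h_n-h\|_{X^\ast}\|u^{\bot}_n\|$, and this is precisely what the coercivity estimate of Lemma~\ref{V.le.coercive} supplies.
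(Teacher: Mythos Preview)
Your proposal is correct and follows essentially the same route as the paper: an upper bound by testing $j(\tau_n;h_n)$ with a fixed minimizer at $(\tau,h)$, and a lower bound via boundedness of the minimizers $u^{\bot}_n$ (from Lemma~\ref{V.le.coercive}), weak compactness in $X$, compact embedding into $L^p(B_1^c;w)$, and weak lower semicontinuity of the gradient norm. The only cosmetic differences are that the paper packages the boundedness of $\{u^{\bot}_{\tau,h}\}$ into a uniform a~priori estimate \eqref{V.est.u^T} via Young's inequality rather than your direct superlinear-growth argument, and that your phrase ``an $L^p$-integral of a polynomial in $\tau_n$'' is imprecise for non-integer $p$---the continuity of $\tau\mapsto\int_{B_1^c}|\tau\nabla\varphi_1+\nabla\tilde u^{\bot}|^p\diff x$ is simply continuity of the $L^p$-norm along a strongly convergent sequence.
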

\begin{proof}
	First, we claim that for $|\tau|\leq T_0$ and $\|h\|_{X^\ast}\leq M_0$ we have
	\begin{equation}\label{V.est.u^T}
\|u^{\bot}_{\tau,h}\|\leq \left[\frac{p\beta_{T_0}}{(p-1)\alpha_{T_0}}+\alpha_{T_0}^{-\frac{p}{p-1}}M_0^{\frac{p}{p-1}}\right]^{\frac{1}{p}},
	\end{equation}
where $\alpha_{T_0},\beta_{T_0}$ depend only on $T_0$ as in Lemma~\ref{V.le.coercive}. Indeed, by Lemma~\ref{V.le.coercive} and Young inequality, for all $|\tau|\leq T_0$ we have
\begin{align*}
\alpha_{T_0}\|u_{\tau,h}^{\bot}\|^p-\beta_{T_0}&\leq\frac{1}{p}\int_{B_1^c}|\nabla(\tau\varphi_1+u_{\tau,h}^{\bot})|^p\diff x-\frac{\lambda_1}{p}\int_{B_1^c}K(x)|\tau\varphi_1+u_{\tau,h}^{\bot}|^p\diff x\notag\\
&= J_{h}(\tau\varphi_1+u_{\tau,h}^{\bot})+\langle h,\tau\varphi_1+u_{\tau,h}^{\bot}\rangle\\
&\leq J_{h}(\tau\varphi_1)+\tau\langle h,\varphi_1\rangle+\langle h,u_{\tau,h}^{\bot}\rangle=\langle h,u_{\tau,h}^{\bot}\rangle\\
&\leq\|h\|_{X^\ast}\|u_{\tau,h}^{\bot}\|\leq \frac{\alpha_{T_0}}{p}\|u_{\tau,h}^{\bot}\|^p+\frac{p-1}{p}\alpha_{T_0}^{-\frac{1}{p-1}}\|h\|_{X^\ast}^{\frac{p}{p-1}}.
\end{align*}
Thus, we obtain \eqref{V.est.u^T}. Now, let $(\tau_n,h_n)\to (\tau_0,h_0)$  in $\mathbb{R}\times X^\ast$ as $n\to\infty.$ Let $\{u_n^{\bot}\}\subset X^{\bot}$ be such that, $j(\tau_n;h_n)=J_{h_n}(\tau_n\varphi_1+u_n^{\bot})$ for all $n.$ By \eqref{V.est.u^T}, $\{u_n^{\bot}\}$ is a bounded sequence in $X.$ So, up to a subsequence, $u_n^{\bot}\rightharpoonup w^{\bot}$ in $X$ and hence, $w^{\bot}\in X^{\bot},$  $\tau_n\varphi_1+u_n^{\bot}\rightharpoonup \tau_0\varphi_1+w^{\bot}$ in $X$ and $\tau_n\varphi_1+u_n^{\bot}\to \tau_0\varphi_1+w^{\bot}$ in $L^p(B_1^c;w)$ as $n\to\infty.$ Thus
	\begin{align*}
	\underset{n\to\infty}{\lim\inf}j(\tau_n;h_n)&=\underset{n\to\infty}{\lim\inf}J_{h_n}(\tau_n\varphi_1+u_n^{\bot})\\
	&=\underset{n\to\infty}{\lim\inf}\biggl[\frac{1}{p}\int_{B_1^c}|\tau_n\nabla\varphi_1+\nabla u_n^{\bot} |^p\diff x-\frac{\lambda_1}{p}\int_{B_1^c}K(x)|\tau_n\varphi_1+ u_n^{\bot}|^p\diff x\\&\qquad \qquad-\langle h_n,\tau_n\varphi_1+ u_n^{\bot}\rangle\biggr]\\
	&\geq \frac{1}{p}\int_{B_1^c}|\tau_0\nabla\varphi_1+\nabla w^{\bot} |^p\diff x-\frac{\lambda_1}{p}\int_{B_1^c}K(x)|\tau_0\varphi_1+ w^{\bot}|^p\diff x\\&\qquad \qquad-\langle h_0,\tau_0\varphi_1+ w^{\bot}\rangle\\
	&=J_{h_0}(\tau_0\varphi_1+w^{\bot})\geq j(\tau_0;h_0).
	\end{align*}
	On the other hand, if $u_0^{\bot}$ is a global minimizer for the functional $u^{\bot}\mapsto 
	J_{h_0}(\tau_0\varphi_1+u^{\bot})$ on $X^{\bot}$, then
	\begin{align*}
	\underset{n\to\infty}{\lim\sup}\ j(\tau_n;h_n)&=\underset{n\to\infty}{\lim\sup}J_{h_n}(\tau_n\varphi_1+u_n^{\bot})\\&\leq \lim_{n\to\infty}J_{h_n}(\tau_n\varphi_1+u_0^{\bot})=J_{h_0}(\tau_0\varphi_1+u_0^{\bot})= j(\tau_0;h_0).
	\end{align*}
	Thus, we obtain
	$$J_{h_0}(\tau_0\varphi_1+w^{\bot})=j(\tau_0;h_0)=\lim_{n\to\infty}j(\tau_n;h_n),$$
	and this proves Lemma~\ref{V.le.continuity}.
\end{proof}
As shown in \cite[Remark 8.2]{Alziary}, $w^{\bot}$ above is indeed a global minimizer for $u^{\bot}\mapsto J_{h_0}(\tau_0\varphi_1+u^{\bot})$ and $u_n^{\bot}\to w^{\bot}$ in $X$ as $n\to\infty.$ Proof of the next lemma can be found in \cite{Alziary}. Indeed, a careful inspection of the proof of Lemma 8.3 in \cite{Alziary} shows that it remains valid even when $\mathcal{D}^{1,p}(\mathbb{R}^N)$ is replaced by $X$.


\begin{lemma}\label{V.le.maximum}
	Let $1<p<\infty$ and let $h\in X^\ast$ be given. Assume that $j(\cdot;h):\mathbb{R}\to\mathbb{R}$ attains a local maximum $m_0$ at some $\tau_0\in\mathbb{R}.$ Then there exists $u_0^{\bot}\in X^{\bot}$ such that $u_0^{\bot}$ is a global minimizer for the functional $u^{\bot}\mapsto J_h(\tau_0\varphi_1+u^{\bot})$ on $X^{\bot},$ $u_0=\tau_0\varphi_1+u_0^{\bot}$ is a critical point for $J_{h}$ and $J_{h}(u_0)=m_0.$
\end{lemma}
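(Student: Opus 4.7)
The plan is to exploit the splitting $X=\operatorname{span}(\varphi_1)\oplus X^{\bot}$, thereby reducing the critical-point condition $J_h'(u_0)=0$ to the two separate vanishing statements: $\langle J_h'(u_0),v^{\bot}\rangle=0$ for all $v^{\bot}\in X^{\bot}$ and $\langle J_h'(u_0),\varphi_1\rangle=0$.

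First I would obtain $u_0^{\bot}$ as a global minimizer of the functional $\Phi_{\tau_0}:u^{\bot}\mapsto J_h(\tau_0\varphi_1+u^{\bot})$ on $X^{\bot}$. Lemma~\ref{V.le.coercive} supplies the needed coercivity on $X^{\bot}$, and together with the weak lower semicontinuity of $J_h$ and the reflexivity of $X$, this yields such a minimizer $u_0^{\bot}$ satisfying $\Phi_{\tau_0}(u_0^{\bot})=j(\tau_0;h)=m_0$. Setting $u_0:=\tau_0\varphi_1+u_0^{\bot}$ we then have $J_h(u_0)=m_0$. Since $u_0^{\bot}$ is an unconstrained minimizer of $\Phi_{\tau_0}$ on the linear subspace $X^{\bot}$ and $J_h\in C^1(X,\mathbb{R})$, the first-order criterion at once delivers $\langle J_h'(u_0),v^{\bot}\rangle=0$ for every $v^{\bot}\in X^{\bot}$, dispatching the $X^{\bot}$ direction.

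For the $\varphi_1$ direction I would, for each $\tau$ near $\tau_0$, pick a minimizer $u_\tau^{\bot}$ of $\Phi_\tau$. The inequality $J_h(\tau_0\varphi_1+u_\tau^{\bot})\geq \Phi_{\tau_0}(u_0^{\bot})=m_0$ together with the identity
\[
J_h(\tau_0\varphi_1+u_\tau^{\bot})=j(\tau;h)+\int_\tau^{\tau_0}\langle J_h'(s\varphi_1+u_\tau^{\bot}),\varphi_1\rangle\,ds,
\]
obtained from the fundamental theorem of calculus applied to $s\mapsto J_h(s\varphi_1+u_\tau^{\bot})$, and combined with the local-maximum bound $j(\tau;h)\leq m_0$, yields
\[
\int_\tau^{\tau_0}\langle J_h'(s\varphi_1+u_\tau^{\bot}),\varphi_1\rangle\,ds\ \geq\ m_0-j(\tau;h)\ \geq\ 0\qquad\text{for}\ \tau<\tau_0\ \text{close to}\ \tau_0,
\]
with the reverse inequality for $\tau>\tau_0$ close to $\tau_0$. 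Dividing by $\tau_0-\tau$ (resp.\ $\tau-\tau_0$) and letting $\tau\to\tau_0$ from the appropriate side, using the strong convergence $u_\tau^{\bot}\to u_0^{\bot}$ in $X$ furnished by the remark following Lemma~\ref{V.le.continuity} and the continuity of $J_h'$, I would conclude $\langle J_h'(u_0),\varphi_1\rangle\geq 0$ from one side and $\leq 0$ from the other, hence $\langle J_h'(u_0),\varphi_1\rangle=0$.

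The main obstacle is arranging that a single $u_0^{\bot}$ serves simultaneously as the strong limit of $u_\tau^{\bot}$ from both sides of $\tau_0$. If the minimizer of $\Phi_{\tau_0}$ happens to be unique this is automatic; in general one extracts subsequential strong limits $w_\pm^{\bot}$ from sequences $\tau_n\to\tau_0^{\pm}$ via the continuity result following Lemma~\ref{V.le.continuity}, and must then select $u_0^{\bot}$ within the set of minimizers of $\Phi_{\tau_0}$ so that the two signed inequalities are obtained at the same limit point—a technicality already resolved in the proof of \cite[Lemma 8.3]{Alziary} cited in the statement.
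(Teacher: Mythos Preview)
Your proposal is correct and follows the natural min--max strategy that underlies \cite[Lemma~8.3]{Alziary}, which is precisely what the paper invokes in lieu of a self-contained proof. You correctly isolate the one genuine technical point---that the subsequential limits $w_\pm^{\bot}$ from the two sides need not coincide when the minimizer of $\Phi_{\tau_0}$ is nonunique---and, like the paper, defer its resolution to Alziary's argument.
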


Finally, we need the following auxiliary result.


\begin{lemma}\label{V.le.nonnegativeness}
	Let $1<p<\infty$ and $h\in X^\ast.$ For $M>0,C>0$ given, there exists $R>C$ such that for all $\tau\in[-M,M]$ and all $u^{\bot}\in X^{\bot},$ $\|u^{\bot}\|=R,$ we have
	$$J_h(\tau\varphi_1+u^{\bot})\geq 0.$$
\end{lemma}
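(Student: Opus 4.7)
The plan is to combine the coercivity estimate from Lemma~\ref{V.le.coercive} with a straightforward linear-growth bound on the $h$-term and then pick $R$ large. First, applying Lemma~\ref{V.le.coercive} with $T=M$, there exist constants $\alpha_M,\beta_M>0$ such that for every $\tau\in[-M,M]$ and every $u^{\bot}\in X^{\bot}$ one has
\begin{equation*}
\int_{B_1^c}|\tau\nabla\varphi_1+\nabla u^{\bot}|^p\diff x-\lambda_1\int_{B_1^c}K(x)|\tau\varphi_1+u^{\bot}|^p\diff x\geq \alpha_M\|u^{\bot}\|^p-\beta_M.
\end{equation*}
Hence
\begin{equation*}
J_h(\tau\varphi_1+u^{\bot})\geq \frac{\alpha_M}{p}\|u^{\bot}\|^p-\frac{\beta_M}{p}-\langle h,\tau\varphi_1+u^{\bot}\rangle.
\end{equation*}

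Next I estimate the linear term using $|\langle h,\tau\varphi_1+u^{\bot}\rangle|\leq \|h\|_{X^\ast}\bigl(M\|\varphi_1\|+\|u^{\bot}\|\bigr)$ for $|\tau|\leq M$. Substituting $\|u^{\bot}\|=R$ yields
\begin{equation*}
J_h(\tau\varphi_1+u^{\bot})\geq \frac{\alpha_M}{p}R^p-\|h\|_{X^\ast}R-\left(\frac{\beta_M}{p}+M\|h\|_{X^\ast}\|\varphi_1\|\right).
\end{equation*}
The right-hand side is a function of $R$ alone which tends to $+\infty$ as $R\to\infty$ since $p>1$. Therefore one can choose $R>C$ large enough so that this lower bound is nonnegative, uniformly in $\tau\in[-M,M]$ and in $u^{\bot}\in X^{\bot}$ with $\|u^{\bot}\|=R$. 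This is the desired conclusion.

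There is no genuine obstacle here: the only point that required work, namely the coercive estimate that survives the nonlinear coupling between the $\tau\varphi_1$ and $u^{\bot}$ components, has already been isolated in Lemma~\ref{V.le.coercive}. The rest is a one-line application of Young's (or just trivial) bounds together with $p>1$.
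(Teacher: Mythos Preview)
Your argument is correct. The only thing to note is that your route differs from the paper's own proof.

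The paper proves Lemma~\ref{V.le.nonnegativeness} by contradiction: assuming the conclusion fails, it produces sequences $\tau_n\in[-M,M]$ and $u_n^{\bot}\in X^{\bot}$ with $\|u_n^{\bot}\|\to\infty$ and $J_h(\tau_n\varphi_1+u_n^{\bot})<0$, normalizes by $\|u_n^{\bot}\|$, and passes to a weak limit to reach the same contradiction as in the proof of Lemma~\ref{V.le.coercive} (namely, the limit $w_0\in X^{\bot}$ is forced to be both $0$ and to satisfy $\int K|w_0|^p>0$). In effect, the paper re-runs the coercivity argument from scratch rather than invoking it.

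Your approach is more economical: you apply Lemma~\ref{V.le.coercive} directly to get the uniform lower bound $\frac{\alpha_M}{p}\|u^{\bot}\|^p-\frac{\beta_M}{p}$, absorb the linear $h$-term, and finish with a one-variable polynomial growth argument since $p>1$. This is shorter and makes transparent that Lemma~\ref{V.le.nonnegativeness} is an immediate corollary of Lemma~\ref{V.le.coercive}. The paper's contradiction argument, on the other hand, is self-contained and would survive even without Lemma~\ref{V.le.coercive} having been stated separately; but given that Lemma~\ref{V.le.coercive} is already available, your proof is the cleaner one.
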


\begin{proof}
	If the conclusion is not true, then for each $n\in\mathbb{N}$ there exist $\tau_n\in [-M,M],$  and $u_n^{\bot}\in X^{\bot}$ with $\|u_n^{\bot}\|=\max\{n,C+1\}$, such that
	\begin{align*}
	J_h(\tau_n\varphi_1+u_n^{\bot})&=\frac{1}{p}\int_{B_1^c}|\tau_n\nabla\varphi_1+\nabla u_n^{\bot}|^p\diff x-\frac{\lambda_1}{p}\int_{B_1^c}K(x)|\tau_n\varphi_1+u_n^{\bot}|^p\diff x\\&\quad-\langle h,\tau_n\varphi_1+u_n^{\bot}\rangle<0.
	\end{align*}
	Thus,
	\begin{align}\label{V.le.nonnegativeness.int}
	\frac{1}{p}\int_{B_1^c}\left|\frac{\tau_n}{\|u_n^{\bot}\|}\nabla\varphi_1+\nabla w_n\right|^p\diff x-\frac{\lambda_1}{p}\int_{B_1^c}&K(x)\left|\frac{\tau_n}{\|u_n^{\bot}\|}\varphi_1+w_n\right|^p\diff x\notag\\
	&-\frac{1}{\|u_n^{\bot}\|^{p-1}}\left\langle h,\frac{\tau_n}{\|u_n^{\bot}\|}\varphi_1+w_n\right\rangle<0,
	\end{align}
	where $w_n:=\frac{u_n^{\bot}}{\|u_n^{\bot}\|}$ for all $n\in\mathbb{N}.$ We may assume
	\begin{equation}\label{V.le.nonnegativeness.conv}
	\begin{cases}
	w_n\rightharpoonup w_0\ \text{in}\ X,\\
	w_n\to w_0\ \text{in}\ L^p(B_1^c;w),\\
	w_0\in X^{\bot}.
	\end{cases}
	\end{equation}
	Letting $n\to\infty$ in \eqref{V.le.nonnegativeness.int} and invoking \eqref{V.le.nonnegativeness.conv}, we obtain
	
	$$\frac{1}{p}\int_{B_1^c}|\nabla w_0|^p\diff x-\frac{\lambda_1}{p}\int_{B_1^c}K(x)|w_0|^p\diff x\leq 0,$$
	and hence $w_0=0.$ But combining \eqref{V.le.nonnegativeness.int} with the facts that $\frac{\tau_n}{\|u_n^{\bot}\|}\to 0$ as $n\to\infty$ and $\|w_n\|=1$ for all $n$, we argue as in the proof of Lemma~\ref{V.le.coercive} to conclude
	$$\int_{B_1^c}K(x)|w_0|^p\diff x>0,$$ a contradiction. The proof is complete.
\end{proof}


\subsection{Proofs of Theorems \ref{V.Theo.singular} and \ref{V.Theo.degenerate}}

\begin{proof}[Proof of Theorem~\ref{V.Theo.singular}]
	For each $\tau\in\mathbb{R}$ and $h\in X^\ast$, define $u^{\bot}_\tau, u_{\tau,h}^{\bot}$ as in \eqref{V.form_j}. Using \eqref{V.lim_j}, we can find $M_1>0>M_2$, such that
	\begin{equation}\label{V.Theo.sing.0}
\max\{j(M_1;h^\ast),j(M_2;h^\ast)\}<3j(0;h^\ast)<j(0;h^\ast).
	\end{equation}
	Here we note that $j(0;h^\ast)<0.$ Applying \eqref{V.est.u^T} for $T_0=0$ and $M_0=\|h^\ast\|_{X^\ast}+1$ we have
	\begin{equation}\label{V.Theo.sing.u0,h}
	\|u_{0,h}^{\bot}\|\leq C_{h^\ast},\ \forall h\in B_{X^\ast}(h^\ast,1),
	\end{equation} 
	with $C_{h^\ast}>0$ depending only on $h^\ast.$ Let
	\begin{equation}\label{V.Theo.sing.rhotilde}
	0<\widetilde{\rho}<\min\left\{1,-\frac{j(0;h^\ast)}{C_{h^\ast}},-\frac{j(0;h^\ast)}{\|M_1\varphi_1+u_{M_1}^{\bot}\|},-\frac{j(0;h^\ast)}{\|M_2\varphi_1+u_{M_2}^{\bot}\|}\right\}.
	\end{equation}
Recall that $j$ is continuous on $\mathbb{R}\times X^\ast$ in view of Lemma~\ref{V.le.continuity}. Thus, by \eqref{V.Theo.sing.0} there exists $\rho\in (0,\widetilde{\rho})$ such that
\begin{equation}\label{V.Theo.sing.max}
	\max\left\{j\left(M_1;h\right),j\left(M_2;h\right)\right\}<j(0;h)
\end{equation}
for all $h\in B_{X^\ast}(h^\ast,\rho).$ Let $h\in B_{X^\ast}(h^\ast,\rho)$ and consider the following cases.
	
	\textbf{The case $\mathbf{\langle h,\varphi_1\rangle\ne 0.}$} We only treat the case $\langle h,\varphi_1\rangle<0$, since the other case $\langle h,\varphi_1\rangle> 0$ can be treated similarly.  Using \eqref{V.Theo.sing.0}-\eqref{V.Theo.sing.rhotilde}, we estimate
	\begin{align}\label{V.Theo.sing.1}
	J_h(M_1\varphi_1+u_{M_1}^{\bot})&=j(M_1;h^\ast)-\langle h-h^\ast,M_1\varphi_1+u_{M_1}^{\bot}\rangle\notag\\
	&\leq 3j(0;h^\ast)+\rho\|M_1\varphi_1+u_{M_1}^{\bot}\|\notag\\
	&<2j(0;h^\ast)\leq J_h(u^{\bot}_{0,h})+\langle h-h^\ast,u_{0,h}^{\bot}\rangle+ j(0;h^\ast)\notag\\
	&<J_h(u^{\bot}_{0,h})\leq 0.
	\end{align}
	By Remark~\ref{V.rmk.behavior}, we find a sequence $\{\tau_n\}\subset \mathbb{R}$, such that $\tau_n\to\infty$ and $\frac{u^{\bot}_{\tau_n,h}}{\tau_n}\rightharpoonup 0$ in $X$ as $n\to\infty.$ Thus
	\begin{align*}
	J_h(\tau_n \varphi_1+u_{\tau_n,h}^{\bot})&=J_{h^\ast}(\tau_n\varphi_1+u_{\tau_n,h}^{\bot})-\langle h-h^\ast,\tau_n\varphi_1+u_{\tau_n,h}^{\bot}\rangle\\
	&\geq j(\tau_n;h^\ast)-\langle h-h^\ast,\tau_n\varphi_1+u_{\tau_n,h}^{\bot}\rangle\\&=\tau_n\left(\frac{j(\tau_n;h^\ast)}{\tau_n}-\langle h-h^\ast,\frac{u^{\bot}_{\tau_n,h}}{\tau_n}\rangle-\langle h,\varphi_1\rangle\right).
	\end{align*}
	From this, $\frac{u^{\bot}_{\tau_n,h}}{\tau_n}\rightharpoonup 0$ and Lemma~\ref{V.le.behavior}, we deduce
	$$J_h(\tau_n\varphi_1+u_{\tau_n,h}^{\bot})>0,\ \ \text{for}\ n\ \text{large.}$$
	Hence, there exists $M>M_1$ such that
	\begin{equation}\label{V.Theo.sing.2}
	J_h(M\varphi_1+u_{M,h}^{\bot})>0.
	\end{equation}
	Then, by Lemma~\ref{V.le.nonnegativeness} there is $R> \|u^{\bot}_{M_1}\|$ 
	such that
	\begin{equation}\label{V.Theo.sing.3}
	J_h(\tau\varphi_1+u^{\bot})\geq 0 \ \ \text{for all}\ \tau\in\mathbb{R},\ u^{\bot}\in X^{\bot}\ \text{with}\ 0\leq \tau\leq M \ \text{and}\  \|u^{\bot}\|=R.
	\end{equation}
	Set
	$$D:=\{u\in X: u=\tau\varphi_1+u^{\bot},\tau\in[0,M], u^{\bot}\in X^\bot,\|u^{\bot}\|\leq R\}.$$
	Then $D$ is bounded and weakly closed subset of $X$ with the boundary $\partial D := \partial_1D \cup \partial_2 D \cup \partial_3 D$, where 
	\begin{align*}
         \partial_1 D &= \{u \in X: u = u^{\bot},u^{\bot}\in X^\bot, \|u^{\bot}\| \leq R\},\\
         \partial_2 D &= \{u \in X: u = M\varphi_1 + u^{\bot},u^{\bot}\in X^\bot, \|u^{\bot}\|\leq R\}, \\
         \partial_3 D &=\{u \in X: u = \tau \varphi_1+u^{\bot}, \tau \in [0,M],u^{\bot}\in X^\bot, \|u^{\bot}\|=R\}.
	\end{align*}
	It follows from \eqref{V.Theo.sing.1} that 
	\begin{align}
	\inf_{u \in D} J_h(u) \leq J_h(M_1\varphi_1+u^{\bot}_{M_1}) < J_h(u^{\bot}_{0,h}) = \inf_{u^{\bot} \in X^{\bot}} J_h(u^{\bot}) \leq \inf_{u \in \partial_1 D}J_h(u).    \label{V.Theo.sing.B1}
	\end{align}
	It follows from \eqref{V.Theo.sing.1} and \eqref{V.Theo.sing.2} that
	\begin{align}
	 \inf_{u \in D} J_h(u) &\leq J_h(M_1\varphi_1+u^{\bot}_{M_1}) < 0 < J_h(M \varphi_1+u^{\bot}_{M,h}) = \inf_{u^{\bot} \in X^{\bot}}J_h(M\varphi_1+u^{\bot}) \notag \\ &\leq \inf_{u \in \partial_2 D} J_h(u). \label{V.Theo.sing.B2}
	 \end{align}
	 Finally, it follows from \eqref{V.Theo.sing.1} and \eqref{V.Theo.sing.3} that  
	 \begin{align}
	 \inf_{u \in D} J_h(u) <0 \leq \inf_{\partial_3 D}J_h(u).   \label{V.Theo.sing.B3}
	 \end{align}
    Therefore, from \eqref{V.Theo.sing.B1}-\eqref{V.Theo.sing.B3}, we conclude 	 
	\begin{equation}\label{V.local.min.geo}
	\inf_{u\in D}J_h(u)<\inf_{u\in\partial D}J_h(u).
	\end{equation} By the weak lower semicontinuity of $J_{h}$ on $X$, $\displaystyle\inf_{u\in D}J_h(u)$ is attained at some $u_D\in D.$ By \eqref{V.local.min.geo}, $u_D$ is an interior point of $D$ and thus $u_D$ is a critical point of $J_h,$ i.e., a solution of \eqref{1.1}. Since $J_{h}(t\varphi_1)\to -\infty$ as $t\to -\infty,$ $J_{h}$ is unbounded from below. Thus, $J_{h}$ has a Mountain Pass geometry and hence, in view of Lemma~\ref{V.le.PS}, we can apply Mountain Pass Theorem to obtain a Mountain Pass solution $u_0\in X$ of \eqref{1.1} such that $u_0\ne u_D$ and is also a critical point for $J_{h}.$
	
	\textbf{The case $\mathbf{\langle h,\varphi_1\rangle=0.}$} From \eqref{V.Theo.sing.max}, the continuous function $j(\cdot;h):\ \mathbb{R}\to \mathbb{R}$ attains a local maximum at some $\tau_0\in (M_2,M_1)$. Then by Lemma~\ref{V.le.maximum}, for some global minimizer $u_{\tau_0,h}^{\bot}$ of the functional $u^{\bot}\mapsto J_h(\tau_0\varphi_1+u^{\bot})$ on $X^{\bot}$ we have that $\widetilde{u}_0=\tau_0\varphi_1+u_{\tau_0,h}^{\bot}$ is a critical point of $J_h$ and hence, a solution of problem \eqref{1.1}.
\end{proof}


\begin{proof}[Proof of Theorem~\ref{V.Theo.degenerate}]
	
	(i) The proof of this  part can be obtained easily by applying any well known technique (e.g. Lax-Milgram theorem, direct
	methods of the Calculus of Variation) for linear elliptic equations in which we invoke \eqref{IV.Poincarep=2} or the compact embedding $X\hookrightarrow\hookrightarrow L^2(B_1^c; w)$.
	
	(ii) Since $h^\ast\in\mathcal{D}_{\varphi_1}^\ast\setminus\{0\},$ the density of $C_c^1(B_1^c)$ in $\mathcal{D}_{\varphi_1}$ implies that there exists $\phi\in C_c^1(B_1^c)$ such that $\langle h^\ast,\phi\rangle>0.$ Then for $t>0$ small,
	\begin{equation}\label{V2.negativeness.of.J}
	J_{h^\ast}(t\phi)=t^p\left[\frac{1}{p}\int_{B_1^c}|\nabla\phi|^p\diff x-\frac{\lambda_1}{p}\int_{B_1^c}K(x)|\phi|^p\diff x\right]-t\langle h^\ast,\phi\rangle <0.
	\end{equation}
	Repeating the argument used in \cite[Proof of Theorem 1.2]{Drabek.EJDE2002}, using the improved Poincar\'e inequality \eqref{IV.Poincare}, the embedding $X\hookrightarrow\mathcal{D}_{\varphi_1}$ and  \eqref{V2.negativeness.of.J}, we can find $R>0$ and $T>0$ such that	
	\begin{equation*}\label{V2.est.infJ}
	\inf_{u\in D}J_{h^\ast}(u)<\inf_{u\in \partial D}J_{h^\ast}(u),
	\end{equation*}
	where $D:=\{u\in X:\ u=\tau\varphi_1+u^\bot,\tau\in [-T,T], u^{\bot}\in X^\bot, \|u^\bot\|\leq R \}.$
	
Then, let 
	\begin{equation}\label{V.choose.rho}
	0<\rho<\frac{\inf_{u\in \partial D}J_{h^\ast}(u)-\inf_{u\in  D}J_{h^\ast}(u)}{2M},
	\end{equation}
	where $M:=\sup_{u\in D}\|u\|.$  Let $h\in B_{X^\ast}(h^\ast,\rho).$ 
	Let $\{u_n\}\subset D$ be such that $J_{h^\ast}(u_n)\to\inf_{u\in D}J_{h^\ast}(u)$ as $n\to\infty.$ Then we have
	\begin{equation}\label{V.inf.D}
	\inf_{u\in D}J_{h^\ast}(u)=\lim_{n\to\infty}\left[J_{h}(u_n)+\langle h-h^\ast,u_n\rangle\right]\geq \inf_{u\in D}J_{h}(u)-\rho M.
	\end{equation}
	Let $\{v_n\}\subset \partial D$ be such that $J_{h}(v_n)\to\inf_{u\in \partial D}J_{h}(u)$ as $n\to\infty.$ Then
	$$\inf_{u\in \partial D}J_{h}(u)=\lim_{n\to\infty}\left[J_{h^\ast}(v_n)-\langle h-h^\ast,v_n\rangle\right]\geq \inf_{u\in \partial D}J_{h^\ast}(u)-\rho M.$$
	Combining this with \eqref{V.choose.rho} and \eqref{V.inf.D}, we obtain
	$$\inf_{u\in  D}J_{h}(u)<\inf_{u\in \partial D}J_{h}(u).$$
	Then, arguing as in the proof of Theorem~\ref{V.Theo.singular} we obtain the desired conclusions.
\end{proof}

\appendix
\section{Proof of Proposition~\ref{II.behavior.of.phi_1}} \label{AppendixA}

\begin{footnotesize}
\begin{proof}[Proof of Proposition~\ref{II.behavior.of.phi_1}]
	The radial symmetry of $\varphi_1$ follows from the radial symmetry of $K$ and the simplicity of $\lambda_1$ (see \cite[Proof of Theorem 1.1 (h)]{Chhetri-Drabek}). Moreover, $\varphi_1\in C^1(\overline{B_1^c})$, $\varphi_1>0$ in $B_1^c$ and $\varphi_1(|x|)\to 0$ as $|x|\to\infty$ due to \cite[Theorem 1.4]{Anoop.CV}. Thus, $\varphi_1$ satisfies
	\begin{equation*}\label{II.eq}
	\begin{cases}
	-\left(r^{N-1}|\varphi_1'|^{p-2}\varphi_1'\right)'=\lambda_1r^{N-1}K(r)\varphi_1^{p-1}\quad \text{in}\ (1,\infty),\\
	\varphi_1(1)=\varphi_1(\infty)=0.
	\end{cases}
	\end{equation*}
	Clearly, there is a unique $r_0\in (1,\infty)$ such that $\varphi'_1(r_0)=0$  and $\varphi_1'>0$ in $[1,r_0)$ and $\varphi_1'<0$ in $(r_0,\infty).$ So 
	we may define
	\begin{equation}\label{II.def.U}
	U(r):=r^{p-1}\left[\frac{-\varphi_1'(r)}{\varphi_1(r)}\right]^{p-1},\ \forall r\geq r_0.
	\end{equation}
	Hence $U(r_0)=0$ and $U(r)>0$ for all $r
	>r_0.$ Using the same argument as in \cite[Proof of Proposition 9.1]{Alziary} for all $r\geq r_0,$ we obtain
	\begin{equation}\label{II.est.U}
	U(r)\leq C_{N,p}:=\left(\frac{N-p}{p-1}\right)^{p-1}, 
	\end{equation}
	and
	\begin{equation}\label{II.eq.of.U}
	U'(r)=\frac{p-1}{r}U(r)\left(U(r)^{\frac{1}{p-1}}-\frac{N-p}{p-1}\right)+\lambda_1r^{p-1} K(r).	
	\end{equation}
	Now, we define
	\begin{equation}\label{II.appendix.form.a}
	a(r):=\frac{p-1}{r}\left(\frac{N-p}{p-1}-U(r)^{\frac{1}{p-1}}\right), \quad  \forall r\geq r_0,
	\end{equation}
	and for $r\geq t\geq r_0$, set
	$$A_t(r):=\int_{t}^{r}a(s)\diff s.$$
	Using \eqref{II.appendix.form.a} and \eqref{II.def.U}, we have 
	$$a(r)=\frac{N-p}{r}+(p-1)\frac{\varphi_1'(r)}{\varphi_1(r)}=(p-1)\frac{\diff}{\diff r}\log\left(r^{\frac{N-p}{p-1}}\varphi_1(r)\right), \quad \forall r\geq r_0,$$
	and hence
	\begin{equation}\label{II.form.A}
	A_t(r)=(p-1)\log\left(\frac{r^{\frac{N-p}{p-1}}\varphi_1(r)}{t^{\frac{N-p}{p-1}}\varphi_1(t)}\right),\quad \forall  t \in [r_0, r].
	\end{equation}
	Furthermore, we claim
	\begin{equation}\label{II.est.a}
	a(r)\geq 0,\ \forall r\geq r_0\ \ \text{and}\  A_{r_0}(\infty):=\int_{r_0}^\infty a(s)\diff s<\infty.
	\end{equation}
	Indeed, the positivity of $a(r)$ in $[r_0,\infty)$ is trivial by \eqref{II.est.U} and \eqref{II.appendix.form.a}. Hence, the function $r\mapsto A_{r_0}(r)$ is increasing in $[r_0,\infty).$ Suppose by contradiction $A_{r_0}(\infty)=\infty.$ Now, using 
	\eqref{II.eq.of.U}
and \eqref{II.appendix.form.a}, we have
	$$U'(r)+a(r)U(r)=\lambda_1r^{p-1}K(r)\quad \text{i.e.,}\quad \frac{\diff}{\diff r}\left(e^{\int_{t}^{r}a(\tau)\diff\tau}U(r)\right)=\lambda_1r^{p-1}K(r)e^{\int_{t}^{r}a(\tau)\diff\tau}.$$
	Hence, we obtain
	\begin{equation}\label{II.eq.of.U.2}
	U(r)-U(t)e^{-\int_{t}^{r}a(s)\diff s}=\lambda_1\int_{t}^{r}s^{p-1}K(s)e^{-\int_{s}^{r}a(\tau)\diff \tau}\diff s,\quad \forall t \in [r_0, r].
	\end{equation}
	Putting $t=r_0$ in (\ref{II.eq.of.U.2}) and recalling $U(r_0)=0$, we have 
	\begin{align}\label{II.eq.of.U.3}
	U(r)&=\lambda_1\int_{r_0}^{r}s^{p-1}K(s)e^{-\int_{s}^{r}a(\tau)\diff\tau}\diff s \notag \\&=\lambda_1\int_{r_0}^{\infty}s^{p-1}K(s)e^{-[A_{r_0}(r)-A_{r_0}(s)]}\chi_{(r_0,r)}(s)\diff s, \quad \forall r\geq r_0,
	\end{align}
	where $\chi_{(r_0,r)}$ is the characteristic function in $(r_0,r).$ Since $A_{r_0}(\infty)=\infty$ and the function $r\mapsto A_{r_0}(r)$ is increasing in $[r_0,\infty)$, we get
	$$s^{p-1}K(s)e^{-[A_{r_0}(r)-A_{r_0}(s)]}\chi_{(r_0,r)}(s)\to 0\ \text{for a.e.}\ s\in (r_0,\infty) \ \text{as}\ r\to\infty\ \text{and}$$
	$$\left|s^{p-1}K(s)e^{-[A_{r_0}(r)-A_{r_0}(s)]}\chi_{(r_0,r)}(s)\right|\leq s^{p-1}K(s)\in L^1(r_0,\infty),$$
	for a.e.\ $s\in (r_0,\infty)$ and for all $r\in(r_0,\infty).$ From this and  \eqref{II.eq.of.U.3}, we obtain 
	$\displaystyle\lim_{r\to\infty}U(r)=0$, via the Lebesgue dominated convergence theorem. Thus, for $\eta:=N-1-\delta\in (0,N-p)$ with $\delta$ taken from $\textrm{(H)}$ there is $r_\eta>r_0$ such that
	$$a(r)=\frac{p-1}{r}\left(\frac{N-p}{p-1}-U(r)^{\frac{1}{p-1}}\right)\geq \frac{N-p-\eta}{r},\ \forall r\geq r_\eta,$$
	and hence
	$$A_t(r)=\int_{t}^{r}a(s)\diff s\geq (N-p-\eta)\log(\frac{r}{t}),\ \forall r\geq t\geq r_\eta.$$
	Applying this estimate to \eqref{II.eq.of.U.2} with $t=r_\eta$, we obtain
	\begin{align}
	U(r)&=U(r_\eta)e^{-\int_{r_\eta}^{r}a(s)\diff s}+\lambda_1\int_{r_\eta}^{r}s^{p-1}K(s)e^{-\int_{s}^{r}a(\tau)\diff \tau}\diff s\notag\\
	&\leq U(r_\eta)(\frac{r}{r_\eta})^{-(N-p-\eta)}+\lambda_1\int_{r_\eta}^{r}s^{p-1}K(s)(\frac{r}{s})^{-(N-p-\eta)}\diff s\notag\\
	&= r^{p-1-\delta}\left[U(r_\eta){r_\eta}^{N-p-\eta}+\lambda_1\int_{r_\eta}^{r}s^{\delta}K(s)\diff s\right]\notag\\
	&\leq C_\eta r^{p-1-\delta},\ \forall r\geq r_\eta,      \label{U-estimate1}
	\end{align}
	where $C_\eta:=U(r_\eta){r_\eta}^{N-p-\eta}+\lambda_1\int_{r_\eta}^{\infty}s^{\delta}K(s)\diff s\in (0,\infty).$
	Combining \eqref{U-estimate1} with \eqref{II.def.U}, we obtain
	$$-\frac{\varphi_1'(r)}{\varphi_1(r)}\leq C_\eta^{\frac{1}{p-1}}r^{-\frac{\delta}{p-1}},\ \forall r\geq r_\eta,$$
	and hence
	$$-\log\left[\frac{\varphi_1(r)}{\varphi_1(r_\eta)}\right]\leq \frac{(p-1)C_\eta^{\frac{1}{p-1}}}{\delta-p+1}\left(r_\eta^{\frac{p-1-\delta}{p-1}}-r^{\frac{p-1-\delta}{p-1}}\right),\ \forall r\geq r_\eta.$$
	This contradicts to the fact that $\varphi_1(r)\to 0$ as $r\to\infty.$ So $A_{r_0}(\infty)<\infty$, and we have just proved \eqref{II.est.a}. Finally we show \eqref{II.behavior1} and \eqref{II.behavior2}. From \eqref{II.est.U}, we have
	$$U(\infty):=\lim_{r\to\infty}U(r)\leq C_{N,p}.$$
	If $U(\infty)< C_{N,p},$ then there exists $\gamma>0$ and $r_1>r_0$ such that
	$$a(r)=\frac{p-1}{r}\left(\frac{N-p}{p-1}-U(r)^{\frac{1}{p-1}}\right)\geq \frac{\gamma}{r},\ \forall r\geq r_1,$$
	a contradiction to $A_{r_0}(\infty)<\infty$. Thus, 
	\begin{equation}\label{II.lim.U}
	\lim_{r\to\infty}U(r)=C_{N,p}.
	\end{equation}
	It follows from \eqref{II.form.A} and \eqref{II.est.a} that
	$$\exists \lim_{r\to\infty}r^{\frac{N-p}{p-1}}\varphi_1(r)=:C\in (0,\infty),$$
	i.e., we get \eqref{II.behavior1}. Combining this with \eqref{II.lim.U} and \eqref{II.def.U} we get \eqref{II.behavior2} and the proof of Proposition~\ref{II.behavior.of.phi_1} is complete.
\end{proof}


\section{Proof of Lemma~\ref{II.le.compact.imb}}\label{AppendixB}
\begin{proof}[Proof of Lemma~\ref{II.le.compact.imb}]
	Let us first obtain the embeddings when we only assume that $p>2$ and $\textrm{(A)}$ hold. For each $u\in C_c^1(B_1^c)$, we have
	\begin{align*}
	\lambda_1\int_{B_1^c}K(x)\varphi_1^{p-2}u^2 \diff x&=\int_{B_1^c}(-\Delta_p\varphi_1)\varphi_1^{-1}u^2\diff x\\
	&=\int_{B_1^c}|\nabla\varphi_1|^{p-2}\nabla\varphi_1\cdot\nabla(\varphi_1^{-1}u^2)\diff x\\
	&=2\int_{B_1^c}|\nabla\varphi_1|^{p-2}\left(\nabla\varphi_1\cdot\nabla u\right) \varphi_1^{-1}u \diff x-\int_{B_1^c}|\nabla\varphi_1|^{p} \varphi_1^{-2} u^2 \diff x.
	\end{align*}
	Thus by using H\"older inequality, we get
	\begin{align}\label{II.est.norms}
	\lambda_1\int_{B_1^c}K(x)\varphi_1^{p-2}u^2\diff x&+\int_{B_1^c}|\nabla\varphi_1|^{p}\varphi_1^{-2} u^2\diff x\notag\\
	&\leq2\left(\int_{B_1^c}|\nabla\varphi_1|^{p-2}|\nabla u|^2\diff x\right)^{1/2}\left(\int_{B_1^c}|\nabla\varphi_1|^{p}\varphi_1^{-2}u^2 \diff x\right)^{1/2}.
	\end{align}
	From the density of $C_c^1(B_1^c)$ in $\mathcal{D}_{\varphi_1}$, we deduce from \eqref{II.est.norms} that for all $u\in \mathcal{D}_{\varphi_1}$, we have
	\begin{equation}\label{II.uphi}
	\int_{B_1^c}K(x)\varphi_1^{p-2}u^2\diff x\leq \frac{2}{\lambda_1}\|u\|_{\mathcal{D}_{\varphi_1}}\left(\int_{B_1^c}|\nabla \varphi_1|^{p}\varphi_1^{-2}u^2\diff x\right)^{1/2}, 
	\end{equation}
	and 
	\begin{equation}\label{II.uphi1}
	2\lambda_1\int_{B_1^c}K(x)\varphi_1^{p-2}u^2\diff x+\int_{B_1^c}|\nabla\varphi_1|^{p}\varphi_1^{-2}u^2\diff x\leq 4\int_{B_1^c}|\nabla\varphi_1|^{p-2}|\nabla u|^2\diff x. 
	\end{equation}
	Thus, we obtain $\mathcal{D}_{\varphi_1}\hookrightarrow \mathcal{H}_{\varphi_1}\ \ \text{and}\ \ \mathcal{D}_{\varphi_1}\hookrightarrow L^2(B_1^c;|\nabla \varphi_1|^p\varphi_1^{-2}).$
	
	Next, we show that the embedding $\mathcal{D}_{\varphi_1}\hookrightarrow\mathcal{H}_{\varphi_1}$ is compact if we assume in addition that $p<N$, $\textrm{(H)}$ and $\displaystyle\lim_{\rho\to\infty}\underset{r\geq\rho}{\operatorname{ess\ sup}}\ r^pK(r)=0$ hold. Let  $\psi_1:\ [0,\infty)\to [0,1]$ be any $C^1$ function such that $\psi_1(r)=1$ for $0\leq r\leq 1,\ \psi_1(r)=0$ for $2\leq r< \infty$ and $\psi_1'(r)\leq 0$ for $1\leq r\leq 2.$ For each $\rho>0$ we define
	$$\psi_\rho(x)=\psi_\rho(|x|):=\psi_1\left(|x|/\rho\right),\ \forall x\in \overline{B_1^c}.$$ Since $|\nabla\psi_\rho(x)|=\frac{1}{\rho}|\psi_1'(|x|/\rho)|,$ we deduce
	\begin{equation}\label{II.est.gradPsi_rho}
	|\nabla\psi_\rho(x)|\leq \frac{C_1}{|x|},\ \forall x\in B_1^c,
	\end{equation}
	where $C_1:=2\underset{1\leq r<\infty}{\sup}|\psi_1'(r)|.$
	Define $T_\rho(u):=\psi_\rho u$ for all $u\in\mathcal{D}_{\varphi_1}.$  We have
	\par \textbf{Claim 1.} $T_\rho: \mathcal{D}_{\varphi_1}\to \mathcal{D}_{\varphi_1}$ and there exist $C_2>0$ and $R_1>0$ such that for all $\rho\geq R_1,$
	\begin{equation}\label{II.appendix.uniform}
	\|T_\rho(u)\|_{\mathcal{D}_{\varphi_1}}\leq C_2\|u\|_{\mathcal{D}_{\varphi_1}},\quad \forall u\in\mathcal{D}_{\varphi_1}.
	\end{equation}
Indeed, using the Minkowski inequality we estimate
		\begin{align}\label{II.est.T_rho}
		\|T_\rho(u)\|_{\mathcal{D}_{\varphi_1}}&=\|\psi_\rho u\|_{\mathcal{D}_{\varphi_1}}=\left(\int_{B_1^c}|\nabla \varphi_1|^{p-2}|\nabla(\psi_\rho u)|^2\diff x\right)^{1/2}\notag\\
		&\leq\left(\int_{B_1^c}|\nabla \varphi_1|^{p-2}\psi_\rho^2|\nabla u|^2\diff x\right)^{1/2}+\left(\int_{B_1^c}|\nabla \varphi_1|^{p-2}|\nabla \psi_\rho|^2u^2\diff x\right)^{1/2}\notag\\
		&\leq\|u\|_{\mathcal{D}_{\varphi_1}}+\left(\int_{B_1^c}|\nabla \varphi_1|^{p-2}|\nabla \psi_\rho|^2u^2\diff x\right)^{1/2}.
		\end{align}
		Lemma~\ref{II.behavior.of.phi_1} yields
		\begin{equation}\label{II.est.phixphi'}
		\varphi_1^{-1}|\varphi_1'|\geq \frac{N-p}{2(p-1)r},\ \forall r\geq R_1,
		\end{equation}
		for $R_1>r_0$ sufficiently large. By this and \eqref{II.est.gradPsi_rho} we obtain
		\begin{equation}\label{II.est.1.phi}
		|\nabla\psi_\rho(x)|\leq M\varphi_1^{-1}(x)|\nabla\varphi_1(x)|,\ \forall |x|\geq R_1, 
		\end{equation}
		where $M:=\frac{2(p-1)C_1}{N-p}.$ Combining \eqref{II.uphi1}, \eqref{II.est.T_rho} and \eqref{II.est.1.phi}, we deduce
		\begin{align*}
		\|T_\rho(u)\|_{\mathcal{D}_{\varphi_1}}\leq\|u\|_{\mathcal{D}_{\varphi_1}}+M\left(\int_{B_1^c}|\nabla \varphi_1|^{p}\varphi_1^{-2}u^2\diff x\right)^{1/2}\leq (1+2M)\|u\|_{\mathcal{D}_{\varphi_1}},
		\end{align*}
		for all $\rho>R_1$ and for all $u\in\mathcal{D}_{\varphi_1}.$
		Thus, we obtain \eqref{II.appendix.uniform} with $C_2:= 1+2M$ and hence, Claim 1 is proved.
	\par Denoting by $J_{\varphi_1}$ the continuous embedding $\mathcal{D}_{\varphi_1}\hookrightarrow \mathcal{H}_{\varphi_1},$ we have
	\par \textbf{Claim 2.} $J_{\varphi_1}\circ T_\rho\to J_{\varphi_1}$ in the uniform operator topology as $\rho\to\infty,$ i.e.,
	$\|T_\rho(u)-u\|_{\mathcal{H}_{\varphi_1}}\to 0$  as $\rho\to\infty$ uniformly for $\|u\|_{\mathcal{D}_{\varphi_1}}\leq 1$.
	
	\noindent Indeed, 
	\eqref{II.est.phixphi'} yields
	$$\frac{K(r)\varphi_1^{p-2}(r)}{|\varphi_1'(r)|^p\varphi_1^{-2}(r)}\leq C_3r^pK(r),\ \forall r\geq R_1,$$
	where $C_3:=\left[\frac{2(p-1)}{N-p}\right]^p.$
	Combining this with \eqref{II.uphi1}, for all $u\in \mathcal{D}_{\varphi_1}$ and for all $\rho\geq R_1$, we get
	\begin{align*}
	4\|u\|_{\mathcal{D}_{\varphi_1}}^2\geq \int_{|x|\geq\rho}|\nabla\varphi_1|^p\varphi_1^{-2}u^2\diff x&\geq \int_{|x|\geq\rho}\frac{K(x)\varphi_1^{p-2}u^2}{C_3|x|^pK(x)}\diff x\\
	&\geq \frac{1}{C_3\underset{r\geq \rho}{\operatorname{ess\ sup}}\ r^pK(r)}\int_{|x|\geq\rho}K(x)\varphi_1^{p-2}u^2\diff x.
	\end{align*}
	Thus, Claim 2 is proved. 
	
 We know that the limit of a norm-convergent sequence of compact operators is also a compact operator. So by Claim 2, to show the compactness of the embedding $\mathcal{D}_{\varphi_1}\hookrightarrow\mathcal{H}_{\varphi_1},$ it suffices to show that $J_{\varphi_1}\circ T_\rho: \mathcal{D}_{\varphi_1}\to \mathcal{H}_{\varphi_1}$ is compact for $\rho>0$ sufficiently large. For $r>1,$ define
	$$\mathcal{D}_{\varphi_1}(A_1^r):=\{ u\in \mathcal{D}_{\varphi_1}: u=0\ \text{a.e. in}\ |x|\geq r\}.$$  
	Clearly, $\mathcal{D}_{\varphi_1}(A_1^r)$ is a closed linear subspace of $\mathcal{D}_{\varphi_1}.$ By Claim 1, \eqref{II.appendix.uniform} the mappings $T_\rho: \mathcal{D}_{\varphi_1}\to \mathcal{D}_{\varphi_1}(A_1^{2\rho})\subset \mathcal{D}_{\varphi_1}$ are uniformly bounded for all $\rho\geq R_1.$ To show that $J_{\varphi_1}\circ T_\rho: \mathcal{D}_{\varphi_1}\to \mathcal{H}_{\varphi_1}$ is compact, it suffices to show that  $\mathcal{D}_{\varphi_1}(A_1^{2\rho})\hookrightarrow \mathcal{H}_{\varphi_1}$ is compact. Before doing this, we obtain the following estimate:
	\begin{equation}\label{II.uphi2}
	\int_{A_R^{R'}}|\nabla \varphi_1|^p\varphi_1^{-2}u^2\diff x\leq 9\log\left(\frac{\varphi^2_1(r_0)}{\varphi_1(R)\varphi_1(R')}\right)\|u\|^2_{\mathcal{D}_{\varphi_1}},\quad \forall 1<R<r_0<R', \forall u\in\mathcal{D}_{\varphi_1}.
	\end{equation}	
	Clearly, the estimate \eqref{II.uphi2} is immediately obtained if we can prove that for all $1<R<r_0<R'$ we have
	\begin{equation}\label{II.log1}
	\int_{A_{r_0}^{R'}}|\nabla \varphi_1|^p\varphi_1^{-2}u^2\diff x\leq 9\log\left(\frac{\varphi_1(r_0)}{\varphi_1(R')}\right)\|u\|^2_{\mathcal{D}_{\varphi_1}},
	\end{equation}
	and 
	\begin{equation}\label{II.log1'}
	\int_{A_{R}^{r_0}}|\nabla \varphi_1|^p\varphi_1^{-2}u^2\diff x\leq 9\log\left(\frac{\varphi_1(r_0)}{\varphi_1(R)}\right)\|u\|^2_{\mathcal{D}_{\varphi_1}}.
	\end{equation}
	To obtain \eqref{II.log1}, we proceed as in \cite[Proof of (4.16)]{Alziary}. Fix any $x'\in \mathbb{R}^N$ with $|x'|=1,$ and take $x=rx',\ r_0\leq r\leq R'.$ We have
	\begin{align*}
	&r^{N-1}|\varphi_1'(r)|^{p-1}\varphi^{-1}_1(r)u^2(rx')=-r^{N-1}\left(|\varphi_1'(r)|^{p-2}\varphi_1'(r)\right)\varphi_1^{-1}(r)u^2(rx')\\
	&=-\int_{r_0}^{r}\frac{\partial}{\partial s}\left[s^{N-1}\left(|\varphi_1'(s)|^{p-2}\varphi_1'(s)\right)\varphi_1^{-1}(s)u^2(sx')\right]\diff s\\
	&=\lambda_1\int_{r_0}^{r}s^{N-1}K(s)\varphi^{p-2}_1(s)u^2(sx')\diff s+\int_{r_0}^{r}s^{N-1}|\varphi_1'(s)|^{p}\varphi_1^{-2}(s)u^2(sx')\diff s\\
	&\hspace*{3cm} +2\int_{r_0}^{r}s^{N-1}|\varphi_1'(s)|^{p-1}\varphi_1^{-1}(s)u(sx')\frac{\partial u}{\partial s}(sx')\diff s.
	\end{align*}
Using the Cauchy-Schwarz inequality and then using the Cauchy inequality for the last integral we get from the preceding equality that
	\begin{align*}
	r^{N-1}|\varphi_1'(r)|^{p-1}&\varphi_1^{-1}(r)u^2(rx')\leq\lambda_1\int_{r_0}^{r}K(s)s^{N-1}\varphi_1^{p-2}(s)u^2(sx')\diff s\\
	&+2\int_{r_0}^{r}s^{N-1}|\varphi_1'(s)|^{p}\varphi_1^{-2}(s)u^2(sx')\diff s+\int_{r_0}^{r}s^{N-1}|\varphi_1'(s)|^{p-2}\left(\frac{\partial u}{\partial s}(sx')\right)^2\diff s.
	\end{align*}
	By integrating with respect to $x'$ over the unit sphere $S_1=\partial B_1\subset \mathbb{R}^N$ endowed with the surface measure $\diff\sigma$ and then changing variable $y=sx'$ we obtain from the last inequality and \eqref{II.uphi1} that
	\begin{align*}
	r^{N-1}|\varphi_1'(r)|^{p-1}\varphi^{-1}_1(r)&\int_{S_1}u^2(rx')\diff\sigma(x')\leq\lambda_1\int_{A_{r_0}^r}K(y)\varphi_1^{p-2}(y)u^2(y)\diff y\\
	&+2\int_{A_{r_0}^r}|\nabla\varphi_1(y)|^{p}\varphi_1^{-2}(y)u^2(y)\diff y+\int_{A_{r_0}^r}|\nabla\varphi_1|^{p-2}\left(\frac{\partial u}{\partial s}\right)^2\diff y\leq 9\|u\|_{\mathcal{D}_{\varphi_1}}.
	\end{align*}
	This yields
	$$\int_{r_0}^{R'}r^{N-1}|\varphi_1'(r)|^{p}\varphi^{-2}_1(r)\int_{S_1}u^2(rx')\diff\sigma(x')\diff r\leq -9\|u\|_{\mathcal{D}_{\varphi_1}}\int_{r_0}^{R'}\frac{\varphi_1'(s)}{\varphi_1(s)}\diff s,$$
	and hence \eqref{II.log1} follows.

 Let us prove \eqref{II.log1'}. For $R\leq r\leq r_0,$ we have
	\begin{align*}
	&r^{N-1}|\varphi_1'(r)|^{p-1}\varphi^{-1}_1(r)u^2(rx')=r^{N-1}\left(|\varphi_1'(r)|^{p-2}\varphi_1'(r)\right)\varphi_1^{-1}(r)u^2(rx')\\
	&=-\int_{r}^{r_0}\frac{\partial}{\partial s}\left[s^{N-1}\left(|\varphi_1'(s)|^{p-2}\varphi_1'(s)\right)\varphi_1^{-1}(s)u^2(sx')\right]\diff s\\
	&=\lambda_1\int_{r}^{r_0}s^{N-1}K(s)\varphi^{p-2}_1(s)u^2(sx')\diff s+\int_{r}^{r_0}s^{N-1}|\varphi_1'(s)|^{p}\varphi_1^{-2}(s)u^2(sx')\diff s\\
	&\hspace*{3cm} -2\int_{r}^{r_0}s^{N-1}|\varphi_1'(s)|^{p-1}\varphi_1^{-1}(s)u(sx')\frac{\partial u}{\partial s}(sx')\diff s.
	\end{align*}
Arguing as above, we deduce \eqref{II.log1'}.

 Now we prove that $\mathcal{D}_{\varphi_1}(A_1^{2\rho})\hookrightarrow\hookrightarrow\mathcal{H}_{\varphi_1}.$ Indeed, let $u_n\rightharpoonup 0$ in $\mathcal{D}_{\varphi_1}(A_1^{2\rho})$ as $n\to\infty$. Then $\{u_n\}$ is bounded in $\mathcal{D}_{\varphi_1}(A^{2\rho}_1)$. Without loss of generality we assume that $\|u_n\|_{\mathcal{D}_{\varphi_1}} \leq 1$ for all $n\in \mathbb{N}$.
 Next we show that $u_n\to 0$ strongly in $\mathcal{H}_{\varphi_1}$ as $n\to\infty$. Let $\epsilon>0,$ and $1<R<r_0<R'<2\rho$ be such that
	\begin{equation}\label{II.log2}
	9\log\left(\frac{\varphi^2_1(r_0)}{\varphi_1(R)\varphi_1(R')}\right)\leq\left(\frac{\lambda_1\epsilon}{8}\right)^2.
	\end{equation} 
	Let $\delta>0$ be such that $R+\delta<r_0<R'-\delta.$ We have
	\begin{align}\label{II.appendix.int1}
	&\int_{B_1^c}K(x)\varphi_1^{p-2}u_n^2\diff x=\int_{A_1^{R+\delta}}K(x)\varphi_1^{p-2}u_n^2\diff x+\int_{A_{R+\delta}^{R'-\delta}}K(x)\varphi_1^{p-2}u_n^2\diff x\notag\\
	&\hspace*{6cm}+\int_{A_{R'-\delta}^{2\rho}}K(x)\varphi_1^{p-2}u_n^2\diff x\notag\\
	&\leq \|\varphi_1\|_{L^\infty(B_1^c)}^{p-2}\|K\|_{L^\infty(B_1^c)}\left(\int_{A_1^{R+\delta}}u_n^2\diff x+\int_{A_{R'-\delta}^{2\rho}}u_n^2\diff x\right)+\int_{A_{R+\delta}^{R'-\delta}}K(x)\varphi_1^{p-2}u_n^2\diff x.
	\end{align}
	Let $\Omega=A_1^{R+\delta}$ or 	$\Omega=A_{R'-\delta}^{2\rho}$. Since $\mathcal{D}_{\varphi_1}\hookrightarrow L^2(B_1^c;|\nabla\varphi_1|^p\varphi_1^{-2})$ and $\underset{\Omega}{\inf}|\nabla \varphi_1|>0$, we have 
	$$\int_{\Omega}|\nabla u_n|^2\diff x\leq \frac{1}{(\underset{\Omega}{\inf}|\nabla \varphi_1|)^{p-2}}\int_{\Omega}|\nabla\varphi_1|^{p-2}|\nabla u_n|^2\diff x\leq\frac{1}{(\underset{\Omega}{\inf}|\nabla \varphi_1|)^{p-2}}\|u_n\|^2_ {\mathcal{D}_{\varphi_1}},$$
	$$\int_{\Omega}u_n^2\diff x\leq C_1(\Omega)\int_{B_1^c}|\nabla\varphi_1|^p\varphi_1^{-2}u_n^2\diff x\leq C_2(\Omega)\|u_n\|^2_{\mathcal{D}_{\varphi_1}},$$
	where $C_1(\Omega)$ and $C_2(\Omega)$ are positive constants independent of $n$. Hence, $\{u_n\}$ is bounded in $W^{1,2}(\Omega)$ and thus up to a subsequence, $u_n\rightharpoonup 0$ in $W^{1,2}(\Omega)$ as $n\to\infty$. So we get $u_n\to 0$ in $L^2(\Omega)$ as $n\to\infty$ and therefore, there exists $n_1\in\mathbb{N}$ such that
	\begin{equation}\label{II.appendix.e1}
	\|\varphi_1\|_{L^\infty(B_1^c)}^{p-2}\|K\|_{L^\infty(B_1^c)}\left(\int_{A_1^{R+\delta}}u_n^2\diff x+\int_{A_{R'-\delta}^{2\rho}}u_n^2\diff x\right)\leq \frac{\epsilon}{2},\quad \forall n\geq n_1.
	\end{equation}
 Let $\phi\in C_c^\infty(B_1^c)$ satisfy $0\leq \phi\leq 1,$ $\phi\equiv 1$ in $A_{R+\delta}^{R'-\delta}$ and $\phi\equiv 0$ in $B_1^c\setminus A_R^{R'}.$ Then, $\phi u_n\in \mathcal{D}_{\varphi_1}$ for all $n$ in view of the embedding $\mathcal{D}_{\varphi_1}\hookrightarrow L^2(B_1^c;|\nabla\varphi_1|^p\varphi_1^{-2}).$ 
 So we apply \eqref{II.uphi} for $u=\phi u_n$ and use the Minkowski inequality, to get
	\begin{align}\label{II.appendix.int2}
	\int_{A_{R+\delta}^{R'-\delta}}K(x)\varphi_1^{p-2}u_n^2\diff x &\leq \int_{B_1^c}K(x)\varphi_1^{p-2}(\phi u_n)^2\diff x\notag\\ &\leq \frac{2}{\lambda_1}\|\phi u_n\|_{\mathcal{D}_{\varphi_1}}\left(\int_{B_1^c}|\nabla\varphi_1|^p\varphi_1^{-2}(\phi u_n)^2\diff x\right)^{\frac{1}{2}}\notag\\
	&\leq\frac{2}{\lambda_1}\left(\int_{B_1^c}|\nabla\varphi_1|^{p-2}|\phi\nabla u_n+u_n\nabla\phi|^2\diff x\right)^{\frac{1}{2}}\left(\int_{A_R^{R'}}|\nabla\varphi_1|^{p}\varphi_1^{-2}u_n^2\diff x\right)^\frac{1}{2}\notag\\
	&\leq\frac{2}{\lambda_1}\left[\left(\int_{B_1^c}|\nabla\varphi_1|^{p-2}|\nabla u_n|^2\diff x\right)^{\frac{1}{2}}+\left(\int_{B_1^c}|\nabla\phi|^2|\nabla\varphi_1|^{p-2}u_n^2\diff x\right)^{\frac{1}{2}}\right] \notag \\  &\qquad \qquad \times \left(\int_{A_R^{R'}}|\nabla\varphi_1|^{p}\varphi_1^{-2}u_n^2\diff x\right)^\frac{1}{2}\notag\\
	&\leq\frac{2}{\lambda_1}\left[1+\|\nabla\phi\|_{L^\infty(B_1^c)}\|\nabla\varphi_1\|_{L^\infty(B_1^c)}^{p-2}\left(\int_{A_1^{R+\delta}}u_n^2\diff x+\int_{A_{R'-\delta}^{2\rho}}u_n^2\diff x\right)^{\frac{1}{2}}\right] \notag\\  &\qquad \qquad \times \left(\int_{A_R^{R'}}|\nabla\varphi_1|^{p}\varphi_1^{-2}u_n^2\diff x\right)^\frac{1}{2}.
	\end{align}
	Applying \eqref{II.uphi2} for $u=u_n$ and invoking \eqref{II.log2}, we obtain
	\begin{equation}\label{II.appendix.e2}
	\left(\int_{A_R^{R'}}|\nabla\varphi_1|^{p}\varphi_1^{-2}u_n^2\diff x\right)^\frac{1}{2}\leq \frac{\lambda_1\epsilon}{8}.
	\end{equation}
	Since $\int_{A_1^{R+\delta}}u_n^2\diff x+\int_{A_{R'-\delta}^{2\rho}}u_n^2\diff x\to 0$ as $n\to\infty$, we can find $n_2\geq n_1$ such that
	$$1+\|\nabla\phi\|_{L^\infty(B_1^c)}\|\nabla\varphi_1\|_{L^\infty(B_1^c)}^{p-2}\left(\int_{A_1^{R+\delta}}u_n^2\diff x+\int_{A_{R'-\delta}^{2\rho}}u_n^2\diff x\right)^{\frac{1}{2}}<2,\quad \forall n\geq n_2.$$
	Combining this with \eqref{II.appendix.int2} and \eqref{II.appendix.e2} we deduce that
	$$\int_{A_{R+\delta}^{R'-\delta}}K(x)\varphi_1^{p-2}u_n^2\diff x\leq\frac{\epsilon}{2},\quad \forall n\geq n_2.$$
	By this, \eqref{II.appendix.int1}, and \eqref{II.appendix.e1} we obtain
	$$\int_{B_1^c}K(x)\varphi_1^{p-2}u_n^2\diff x<\epsilon,\quad \forall n\geq n_2.$$
	Since $\epsilon >0$ was arbitrary, it follows that $u_n \to 0$ in $\mathcal{H}_{\varphi_1}$. Thus, the embedding $\mathcal{D}_{\varphi_1}(A_1^{2\rho})\hookrightarrow\mathcal{H}_{\varphi_1}$ is compact and so is the embedding $\mathcal{D}_{\varphi_1} \hookrightarrow \mathcal{H}_{\varphi_1}$. The proof  of Lemma~\ref{II.le.compact.imb} is complete.
	\end{proof}


\section{Proof of Lemma~\ref{II.le.weak.derivative}}\label{Appendix.WeakDerivative}
\begin{proof}[Proof of Lemma~\ref{II.le.weak.derivative}]
	 Let $u\in \mathcal{D}_{\varphi_1}$ be arbitrary. By the definition of $\mathcal{D}_{\varphi_1}$, $u$ is represented by a sequence $\{u_n\}\subset X$ that is a Cauchy sequence with respect to the norm $\|\cdot\|_{\mathcal{D}_{\varphi_1}}$ i.e.,
	$$\int_{B_1^c}|\nabla \varphi_1|^{p-2}|\nabla u_n-\nabla u_m|^2\diff x\to 0\quad\text{as}\quad m,n\to\infty.$$
	By this and the embedding $\mathcal{D}_{\varphi_1}\hookrightarrow \mathcal{H}_{\varphi_1}$ we deduce that there is a measurable vector-valued function $v=(v_1,\cdots,v_N)$ such that $u_n\to u,$ $\nabla u_n\to v$ a.e. in $B_1^c,$ and
	\begin{equation}\label{II.A.WeakDerivative.convergence}
	\int_{B_1^c}|\nabla \varphi_1|^{p-2}|\nabla u_n-v|^2\diff x\to 0,\quad \int_{B_1^c}K\varphi_1^{p-2}|u_n-u|^2\diff x\to 0\quad \text{as}\quad n\to \infty. 
	\end{equation}
	As shown in the proof of Proposition~\ref{II.behavior.of.phi_1} we have $\varphi_1(x)=\varphi_1(|x|)$, $\varphi_1\in C^1(1,\infty),$  $\varphi_1>0$ in $(1,\infty)$ and there is a unique $r_0\in (1,\infty)$ such that $\varphi'_1(r_0)=0.$ Since $$-\left(r^{N-1}|\varphi_1'|^{p-2}\varphi_1'\right)'=\lambda_1r^{N-1}K(r)\varphi_1^{p-1}\quad \text{in}\ (1,\infty),$$
	we deduce that for each $n\geq 3$, there exists $C_n>0$ such that
	\begin{equation}\label{II.A.WeakDerivative1}
	|\varphi_1'(r)|^{2-p}\leq C_n\left|\int_{r_0}^{r}K(s)\diff s\right|^{\frac{2-p}{p-1}},\quad \forall r\in [1+\frac{1}{n},n].
	\end{equation}
	
	Let $\phi \in C_c^\infty(B_1^c)$ and let $n_0\in\mathbb{N}$ such that $\operatorname{supp}(\phi)\subset A_{1+1/n_0}^{n_0}.$ For each $i\in\{1,\cdots,N\}$ and for any $n\in\mathbb{N}$ we have
	\begin{equation}\label{II.A.weakderivative.form}
	\int_{B_1^c}\frac{\partial u_n}{\partial x_i}\phi\diff x=-\int_{B_1^c}u_n\frac{\partial \phi}{\partial x_i}\diff x.   
	\end{equation}
	Invoking \eqref{II.A.WeakDerivative1}, we estimate
	\begin{align*}
	&\left|\int_{B_1^c}\frac{\partial u_n}{\partial x_i}\phi\diff x-\int_{B_1^c}v_i\phi\diff x\right|\\
	&\leq \int_{B_1^c}|\phi||\nabla\varphi_1|^{\frac{2-p}{2}}|\nabla\varphi_1|^{\frac{p-2}{2}}\left|\frac{\partial u_n}{\partial x_i}-v_i\right|\diff x	\\
	&\leq \left(\int_{B_1^c}|\phi|^2|\nabla\varphi_1|^{2-p}\diff x \right)^{\frac{1}{2}}\left(\int_{B_1^c}|\nabla\varphi_1|^{p-2}\left|\frac{\partial u_n}{\partial x_i}-v_i\right|^2\diff x\right)^{\frac{1}{2}}\\
	&
	\leq C(n_0,\|\phi\|_{L^\infty(B_1^c)})\left(\int_{1+\frac{1}{n_0}}^{n_0}\left|\int_{r_0}^{r}K(s)\diff s\right|^{\frac{2-p}{p-1}}\diff r\right)^{\frac{1}{2}}\left(\int_{B_1^c}|\nabla\varphi_1|^{p-2}\left|\frac{\partial u_n}{\partial x_i}-v_i\right|^2\diff x\right)^{\frac{1}{2}}.
	\end{align*}
	Letting $n\to\infty$ in the preceding estimate, using $\textrm{(W)}$ and \eqref{II.A.WeakDerivative.convergence}, we obtain
	\begin{equation}\label{II.A.WeakDerivative.lim1}
	\lim_{n\to\infty}\int_{B_1^c}\frac{\partial u_n}{\partial x_i}\phi\diff x=\int_{B_1^c}v_i\phi\diff x.
	\end{equation}
	On the other hand, we have
	\begin{align*}
	\left|\int_{B_1^c}u_n\frac{\partial \phi}{\partial x_i}\diff x-\int_{B_1^c}u\frac{\partial \phi}{\partial x_i}\diff x\right|&\leq \int_{B_1^c}K^{-\frac{1}{2}}\varphi_1^{\frac{2-p}{2}}\left|\frac{\partial \phi}{\partial x_i}\right|K^{\frac{1}{2}}\varphi_1^{\frac{p-2}{2}}|u_n-u|\diff x	\\
	&\leq \|\nabla\phi\|_{L^\infty(B_1^c)}\left(\int_{1+\frac{1}{n_0}}^{n_0}K^{-1}\diff x\right)^{\frac{1}{2}}\int_{B_1^c}K\varphi_1^{p-2}|u_n-u|^2\diff x.	
	\end{align*}	
	Letting $n\to\infty$ in the preceding estimate, using $\textrm{(W)}$ and \eqref{II.A.WeakDerivative.convergence} again, we obtain
	\begin{equation*}
	\lim_{n\to\infty}\int_{B_1^c}u_n\frac{\partial \phi}{\partial x_i}\diff x=\int_{B_1^c}u\frac{\partial \phi}{\partial x_i}\diff x.
	\end{equation*}	
	Combining this with \eqref{II.A.WeakDerivative.lim1} and \eqref{II.A.weakderivative.form} we get
	\begin{equation*}
	\int_{B_1^c}v_i\phi\diff x=-\int_{B_1^c}u\frac{\partial \phi}{\partial x_i}\diff x,\quad \forall\phi\in C_1^\infty(B_1^c).
	\end{equation*}	
	Thus, $v=\nabla u$ and $u\in W^1(B_1^c),$ and the proof is completed.
\end{proof}

\section{Proof of Proposition~\ref{II.prop.simplicity}}\label{AppendixC}
\begin{proof}[Proof of Proposition~\ref{II.prop.simplicity}]
	Let $u\in\mathcal{D}_{\varphi_1}\setminus\{0\}$ be such that $\mathcal{Q}_0(u,u)=0.$ 
	Then, $u$ is a minimizer for $\lambda_1$ in \eqref{2nd.form.of.lamda1}. If $u$ changes sign in $B_1^c$ then $u^+\not\equiv 0,\ u^-\not\equiv 0,$ and
	\begin{align*}
	\lambda_1&=\frac{\int_{B_1^c}K(x)\varphi_1^{p-2}(u^+)^2\diff x}{\int_{B_1^c}K(x)\varphi_1^{p-2}u^2\diff x}\frac{\int_{B_1^c}\langle\mathbb{A}(\nabla\varphi_1)\nabla u^+,\nabla u^+\rangle_{\mathbb{R}^N}\diff x}{\int_{B_1^c}(p-1)K(x)\varphi_1^{p-2}(u^+)^2\diff x}\\
	&\qquad+\frac{\int_{B_1^c}K(x)\varphi_1^{p-2}(u^-)^2\diff x}{\int_{B_1^c}K(x)\varphi_1^{p-2}u^2\diff x}\frac{\int_{B_1^c}\langle\mathbb{A}(\nabla\varphi_1)\nabla u^-,\nabla u^-\rangle_{\mathbb{R}^N}\diff x}{\int_{B_1^c}(p-1)K(x)\varphi_1^{p-2}(u^-)^2\diff x}\\
	&\geq\left(\frac{\int_{B_1^c}K(x)\varphi_1^{p-2}(u^+)^2\diff x}{\int_{B_1^c}K(x)\varphi_1^{p-2}u^2\diff x}+\frac{\int_{B_1^c}K(x)\varphi_1^{p-2}(u^-)^2\diff x}{\int_{B_1^c}K(x)\varphi_1^{p-2}u^2\diff x}\right)\lambda_1=\lambda_1.
	\end{align*}
Thus, $u^+,u^-$ are 
minimizers for $\lambda_1$ in \eqref{2nd.form.of.lamda1}. 
Note that if $\mathcal{Q}_0(u,u)=0$ then $\mathcal{Q}_0(u,v)=0$ for all $v\in\mathcal{D}_{\varphi_1}.$ 
For each $t\in\mathbb{R},$ set $v_t:=u-t\varphi_1.$ If $v_t\ne 0,$ then since 
	$$\mathcal{Q}_0(\pm v_t,\pm v_t)=\mathcal{Q}_0(u,u)- 2\mathcal{Q}_0(u,\varphi_1)t+\mathcal{Q}_0(\varphi_1,\varphi_1)t^2=0,$$
	we have $\pm v_t$ are also minimizers for $\lambda_1$ in \eqref{2nd.form.of.lamda1}. If $v_t$ changes sign then $v_t^{\pm}$ are 
	 minimizers for $\lambda_1$ in \eqref{2nd.form.of.lamda1}. Then
	\begin{equation}\label{II.A.positiveness}
	-\nabla\cdot(\mathbb{A}(\nabla\varphi_1)\nabla v_t^{\pm})=\lambda_1(p-1)K\varphi_1^{p-2}v_t^{\pm}\geq 0 \ \ \text{in}\ B_1^c.
	\end{equation}
	So, in any case, $v_t^{\pm}$ are nonnegative solutions of \eqref{II.A.positiveness}. We know, $\varphi_1\in C^1(B_1^c),$ $\varphi_1(x)=\varphi_1(|x|)$ and $\varphi_1'(r)=0$ has a unique solution $r_0\in(1,\infty).$ Morever, it is easy to see that $\varphi_1\in W_{loc}^{2,\infty}(B_1^c\setminus S_{r_0}).$ Let $n_0\in \mathbb{N}$ be such that $1+\frac{1}{n_0}<r_0<n_0-\frac{1}{n_0}$ and set $\Omega_n:=A_1^{r_0-\frac{1}{n}}$ or $A_{r_0+\frac{1}{n}}^n$ $(n=n_0,n_0+1,\cdots)$. For each $n\geq n_0$ we have
	$$\mathbb{A}(\nabla\varphi_1)\xi\cdot\xi\geq |\nabla\varphi_1|^{p-2}|\xi|^2\geq \left(\underset{\Omega_{n}}{\inf}|\nabla \varphi_1|\right)^{p-2}|\xi|^2,\quad \forall \xi\in\mathbb{R}^N.$$
Thus, we can apply \cite[Theorem 8.22]{Gilbarg} and then \cite[Theorem 2 and its Remark]{DiBenedetto} to get $v_t^\pm\in C^{1,\beta_{n}}(\overline{\Omega}_{n})$, for some $\beta_{n}\in (0,1).$ Since $B_1^c\setminus S_{r_0}=\bigcup_{n=n_0}^\infty\left( A_1^{r_0-1/n}\cup A_{r_0+1/n}^n\right),$ we obtain that $v_t^{\pm}\in C^1(B_1^c\setminus S_{r_0}).$ By the strong maximum principle due to V\'{a}zquez \cite[Theorem 4, p.199]{Vazquez}, we have $v_t^{\pm}\equiv 0$ in $\Omega_n$ or else $v_t^{\pm}>0$ in $\Omega_n$ for each $n\geq n_0.$ Hence, we have $v_t^{\pm}\equiv 0$ in $A_1^{r_0}$ (resp. $B_{r_0}^c$) or else $v_t^{\pm}>0$ in $A_1^{r_0}$ (resp. $B_{r_0}^c$). 

Next, set $t_1=\frac{u(x_1)}{\varphi_1(x_1)}$ and $t_2=\frac{u(x_2)}{\varphi_1(x_2)}$ for some $x_1\in A_1^{r_0}$ and $x_2\in B_{r_0}^c$ then $v_{t_1}(x_1)=v_{t_2}(x_2)=0.$ We consider the following cases.
	\begin{itemize}
		\item[(i)] If $v_{t_1}^+\not \equiv 0$ and $v_{t_1}^-\not \equiv 0$ then $v_{t_1}^+\not \equiv 0$ and $v_{t_1}^-\not \equiv 0$ in $B_{r_0}^c$, since $v_{t_1}^+=v_{t_1}^-\equiv 0$ in $A_1^{r_0}$, and that is a contradiction. 
		\item[(ii)] If $v_{t_1}^+\not \equiv 0$ and $v_{t_1}^- \equiv 0$ then $v_{t_1}=v_{t_1}^+\geq 0$ satisfies
		$$-\nabla\cdot(\mathbb{A}(\nabla\varphi_1)\nabla v_{t_1})=\lambda_1(p-1)K\varphi_1^{p-2}v_{t_1}\geq 0 \ \ \text{in}\ B_1^c.$$
		Thus, $v_{t_1}\equiv 0$ in $A_1^{r_0}$ and $v_{t_1}> 0$ in $B_{r_0}^c.$ This yields $t_2=\frac{v_{t_1}(x_2)}{\varphi_1(x_2)}+t_1>t_1$ and hence, $v_{t_2}=(t_1-t_2)\varphi_1<0$ in $A_1^{r_0}.$ Because of this, $v_{t_2}^+\equiv 0$ in $A_1^{r_0}$ and therefore, $v_{t_2}^+\equiv 0$ in $B_1^c$ due to $v_{t_2}^+\equiv 0$ in $B_{r_0}^c.$ So $-v_{t_2}=v_{t_2}^-\geq 0$ satisfies
		$$-\nabla\cdot(\mathbb{A}(\nabla\varphi_1)\nabla (-v_{t_2}))=\lambda_1(p-1)K\varphi_1^{p-2}(-v_{t_2})\geq 0 \ \ \text{in}\ B_1^c.$$
		Thus, $-v_{t_2}\equiv 0$ in $B_{r_0}^c$ and hence, 
		\begin{equation}\label{II.form_u}
		u=\begin{cases}
		t_1\varphi_1\ \ \text{in}\ \ A_1^{r_0},\\ 
		t_2\varphi_1\ \ \text{in}\ \ B_{r_0}^c.
		\end{cases}
		\end{equation}
		Since $u\in\mathcal{D}_{\varphi_1},$ $u$ has a weak derivative on $A_1^{2r_0}$ in view of Lemma~\ref{II.le.weak.derivative} and by \eqref{II.form_u}, $u\in W^{1,\infty}(A_1^{2r_0}).$ A classical result (see e.g., \cite[Theorem 5 of Subsection 4.2.3]{Evans}) therefore let us know that $u$ is Lipschitz continuous in $A_{\frac{r_0+1}{2}}^{r_0+1}$ and this is impossible due to the form \eqref{II.form_u} of $u.$ 
		\item[(iii)] If $v_{t_1}^+\equiv 0$ and $v_{t_1}^- \not \equiv 0$ then as in the case (ii) we obtain $v_{t_1}\equiv 0$ in $A_1^{r_0}$ and $v_{t_2}\equiv 0$ in $B_{r_0}^c$ and this leads to a contradiction.
	\end{itemize}
	So we must have $v_{t_1}^+=v_{t_1}^-\equiv 0$ in $B_1^c,$ i.e., $u=t_1\varphi_1$ in $B_1^c$ and the proof is complete.
	\end{proof}
\section{Proof of Lemma~\ref{III.le.phi1.in.closure.of.Y}}\label{AppendixD}
\begin{proof}[Proof of Lemma~\ref{III.le.phi1.in.closure.of.Y}]
Let $\epsilon>0$ be arbitrary. We will show that there exists $w_\epsilon\in Y$ such that 
\begin{equation}\label{III.approximation}
\|w_\epsilon-\varphi_1\|<\epsilon.
\end{equation}
	We proceed in three steps. For the sake of brevity we will only give the details of Step 1, and we omit the details of Steps 2 and 3, since they essentially follow the same path of Step 1. Note that, as shown in the proof of Proposition~\ref{II.behavior.of.phi_1}, we have  $\varphi_1\in C^1[1,\infty),$ $\varphi_1(1)=0,$ $\varphi_1(r)>0$ for all $r>1$, $\varphi'_1>0$ and $\varphi'_1<0$ in $[1,r_0)$ and $(r_0,\infty),$ respectively for some $r_0\in (1,\infty).$

	\textit{Step 1: cut-off by zero near infinity.} By Proposition~\ref{II.behavior.of.phi_1}, there exist $C_1>0$ and $r_1>r_0$ such that
	\begin{equation}\label{III.est.phi1}
	\varphi_1(r)\leq\frac{C_1}{r^{\frac{N-p}{p-1}}},\ |\varphi'_1(r)|\leq \frac{C_1}{r^{\frac{N-1}{p-1}}},\ \left|\frac{\varphi'_1(r)}{\varphi_1(r)}\right|\leq \frac{C_1}{r},\ \forall r>r_1.
	\end{equation}
	For each $n\in\mathbb{N}\cap (r_1,\infty),$ let $\phi_{1,n}\in C^\infty(\mathbb{R})$ be such that
	\begin{equation*}\label{III.form.phi4,n}
	\phi_{1,n}(r)=\begin{cases}
	\varphi_1(n),\quad r\in (-\infty,n],\\
	0,\quad r\in [2n,\infty),\\
	\end{cases}
	\end{equation*}
	and for all $r\in [n,2n],$
	\begin{equation}\label{III.est.phi4,n}
	|\phi_{1,n}(r)|\leq \varphi_1(n)\leq\frac{C_1}{n^{\frac{N-p}{p-1}}},\ |\phi_{1,n}'(r)|\leq \gamma_0\frac{\varphi_1(n)}{n}\leq \gamma_0\frac{C_1}{n^{\frac{N-1}{p-1}}},
	\end{equation}
	where $\gamma_0$ is a positive constant independent of $n$. We then find $\alpha_{1,n}\in C^\infty(\mathbb{R})$ such that $\psi_{1,n}:=\phi_{1,n}\alpha_{1,n}$ satisfies
	\begin{equation}\label{III.cond.psi1,n}
	\begin{cases}
	\psi_{1,n}(n)=\varphi_1(n),\psi_{1,n}'(n)=\varphi_1'(n),\\
	\psi_{1,n}(2n)=\psi_{1,n}'(2n)=0.
	\end{cases}
	\end{equation}
	This is equivalent to
	\begin{equation} 
	\alpha_{1,n}(n)=1,\ \alpha'_{1,n}(n)=\frac{\varphi_1'(n)}{\varphi_1(n)}.        \label{III.cond.alpha1n}
	\end{equation}
	Looking at \eqref{III.cond.alpha1n}, our immediate choice for such an $\alpha_{1,n}$ is
	$$\alpha_{1,n}(r):=\frac{\varphi_1'(n)}{\varphi_1(n)}(r-n)+1,\quad \forall r\in\mathbb{R}.$$
	Then, by \eqref{III.est.phi1} we have for all $r\in [n,2n],$
	$$|\alpha_{1,n}(r)|\leq\left|\frac{\varphi_1'(n)}{\varphi_1(n)}\right|n+1\leq C_1+1,\ |\alpha'_{1,n}(r)|\leq \left|\frac{\varphi_1'(n)}{\varphi_1(n)}\right|\leq\frac{C_1}{n}.$$
	Clearly, $\psi_{1,n}$ belongs to $C^\infty(\mathbb{R})$ and by combining the last estimate and \eqref{III.est.phi4,n} we deduce that for all $n\in \mathbb{N}\cap (r_1,\infty),$ we have
	\begin{equation}\label{III.est.psi4,n}
	|\psi'_{1,n}(r)|\leq \frac{\gamma_0 C_1(C_1+1)}{n^{\frac{N-1}{p-1}}}+\frac{C_1^2}{n^{\frac{N-1}{p-1}}}=\frac{\widetilde{C}_1}{n^{\frac{N-1}{p-1}}},\quad \forall r\in [n,2n],
	\end{equation}
where $\widetilde{C}_1:=\gamma_0 C_1(C_1+1)+C_1^2.$ For each $n\in\mathbb{N},$ define
	\begin{equation*}\label{III.form.v_epsilon,n}
	u_{\epsilon,n}(r):=\begin{cases}
\varphi_1(r),\quad r\in [1,n],\\
	\psi_{1,n}(r),\quad r\in [n,2n],\\
	0,\quad r\in [2n,\infty),
	\end{cases}
	\end{equation*}
	and define $u_{\epsilon,n}(x):=u_{\epsilon,n}(|x|)$ for $x\in B_1^c.$
	Then by \eqref{III.cond.psi1,n}, $u_{\epsilon,n}\in C^1(B_1^c)$ and \eqref{III.est.psi4,n}, we obtain
	\begin{align*}
	\int_{B_1^c}|\nabla u_{\epsilon,n}-\nabla \varphi_1|^p\diff x&=\int_{B_n^c}|\nabla u_{\epsilon,n}-\nabla \varphi_1|^p\diff x\\
	& \leq 2^{p-1}\int_{B_n^c}|\nabla \varphi_1|^p\diff x+2^{p-1}\sigma(S_1)\int_{n}^{2n}|\psi'_{1,n}(r)|^pr^{N-1}\diff r\\
	& \leq 2^{p-1}\int_{B_n^c}|\nabla\varphi_1|^p\diff x+2^{p-1}\sigma(S_1)(2n)^{N-1}\frac{\widetilde{C}_1^p}{n^{\frac{(N-1)p}{p-1}}}n,\quad \forall n\in\mathbb{N}\cap (r_1,\infty),
	\end{align*}
where $\sigma(S_1)$ is the surface area of the unit sphere $S_1.$ Thus, $$\int_{B_1^c}|\nabla u_{\epsilon,n}-\nabla \varphi_1|^p\diff x \leq 2^{p-1}\int_{B_n^c}|\nabla\varphi_1|^p\diff x+2^{N+p-2}\sigma(S_1)\frac{\widetilde{C}_1^p}{n^{\frac{N-p}{p-1}}}\ \to\ 0\ \text{as}\ n\to\infty.$$
	Hence for some  fixed $n_1\in\mathbb{N}\cap(r_1,\infty),$ $u_\epsilon:=u_{\epsilon,n_1}$ satisfies that $u_\epsilon\in C^1(B_1^c),$ $u_\epsilon=\varphi_1$ in $A_1^{n_1}$, $u_\epsilon=0$ in $B_{2n_1}^c,$  and 
	\begin{equation}\label{III.est.v_epsilon1}
	\|u_\epsilon-\varphi_1\|<\frac{\epsilon}{3}.       
	\end{equation}
    		
	\textit{Step2: cut-off by zero near the $\partial B_1$.} Fix $\delta>0$ such that $1<r_0-2\delta<r_0+2\delta<n_1.$ Since $\varphi_1'(r)>0$ in $[1,r_0-2\delta]$ and $\varphi_1(1)=0$ we deduce
	$$C_2(r-1)\leq \varphi_1(r)\leq C_3 (r-1),\quad \forall r\in [1,r_0-2\delta],$$
	for some positive constants $C_2$ and $C_3.$ Using this estimate and noticing  $u_\epsilon=\varphi_1$ in $A_1^{n_1}$, 	
	 we can construct $v_\epsilon\in C^1(B_1^c),$ $v_\epsilon=0$ in $A_1^{1+1/n_2}$ for some $n_2\in \mathbb{N}\cap \left(\frac{2}{r_0-2\delta-1},\infty\right)$, $v_\epsilon=u_\epsilon$ in   $B_{1+2/n_{2}}^c,$  and
	\begin{equation}\label{III.est.v_epsilon}
	\|v_\epsilon-u_\epsilon\|<\frac{\epsilon}{3},
	\end{equation}
	in a similar manner to Step 1. Note that $v_\epsilon$ satisfies that $v_\epsilon\in C^1(B_1^c),$ $v_\epsilon=0$ in $A_1^{1+1/n_2}\cup B_{2n_1}^c,$ and $v_\epsilon=\varphi_1$ in $A_{r_0-2\delta}^{r_0+2\delta}.$
	
    \textit{Step 3: cut-off by a constant near $S_{r_0}$.} Analogously, for each $n\in\mathbb{N},$ we find $\psi_{2,n}, \psi_{3,n}\in C^\infty(\mathbb{R})$ such that
    \begin{equation}\label{III.cond.psi2,n}
    \begin{cases}
    \psi_{2,n}(r_0-\frac{2\delta}{n})=v_\epsilon(r_0-\frac{2\delta}{n}),\psi_{2,n}'(r_0-\frac{2\delta}{n})=v_\epsilon'(r_0-\frac{2\delta}{n}),\\
    \psi_{2,n}(r_0-\frac{\delta}{n})=v_\epsilon(r_0-\frac{\delta}{n}),\psi_{2,n}'(r_0-\frac{\delta}{n})=0,
    \end{cases}
    \end{equation}
    and 
    \begin{equation}\label{III.cond.psi3,n}
    \begin{cases}
    \psi_{3,n}(r_0+\frac{\delta}{n})=v_\epsilon(r_0-\frac{\delta}{n}),\psi_{3,n}'(r_0+\frac{\delta}{n})=0,\\
    \psi_{3,n}(r_0+\frac{2\delta}{n})=v_\epsilon(r_0+\frac{2\delta}{n}),\psi_{3,n}'(r_0+\frac{2\delta}{n})=v'_\epsilon(r_0+\frac{2\delta}{n}).
    \end{cases}
    \end{equation}
   Moreover, for all $n\in \mathbb{N}$, we have
    \begin{equation}\label{III.est.psi2,n}
    |\psi'_{2,n}(r)|\leq M, \quad \forall r\in [r_0-\frac{2\delta}{n},r_0-\frac{\delta}{n}],
        \end{equation}
        and 
        \begin{equation}\label{III.est.psi3,n}
        |\psi'_{3,n}(r)|\leq M, \quad \forall r\in [r_0+\frac{\delta}{n},r_0+\frac{2\delta}{n}],
        \end{equation}
  for some constant $M>0$ independent of $n.$
  Finally, for each $n\in\mathbb{N}$ define
  \begin{equation}\label{III.form.w_epsilon,n}
  w_{\epsilon,n}(r):=\begin{cases}
  v_\epsilon(r),\quad r\in [1,r_0-\frac{2\delta}{n}]\cup [r_0+\frac{2\delta}{n},\infty),\\
  \psi_{2,n}(r),\quad r\in [r_0-\frac{2\delta}{n},r_0-\frac{\delta}{n}],\\
  v_\epsilon(r_0-\frac{\delta}{n}),\quad r\in [r_0-\frac{\delta}{n},r_0+\frac{\delta}{n}],\\
  \psi_{3,n}(r),\quad r\in [r_0+\frac{\delta}{n},r_0+\frac{2\delta}{n}],
  \end{cases}
  \end{equation}
 and define  $w_{\epsilon,n}(x):= w_{\epsilon,n}(|x|)$ for $x\in B_1^c.$ Then by \eqref{III.cond.psi2,n} and \eqref{III.cond.psi3,n},  for all $n\in\mathbb{N}$ we have that $w_{\epsilon,n}\in C^1(B_1^c),$ $w_{\epsilon,n}=0$ in $A_1^{1+1/n_2}\cup B_{2n_1}^c,$  $w_\epsilon\equiv \operatorname{constant}$ in   $A_{r_0-\frac{\delta}{n}}^{r_0+\frac{\delta}{n}}.$  From \eqref{III.est.psi2,n}-\eqref{III.form.w_epsilon,n}, for large $n_3\in\mathbb{N}$, we have  
    	\begin{equation*}
	\|w_{\epsilon,n_3}-v_\epsilon\|<\frac{\epsilon}{3}.
	\end{equation*}
By this, \eqref{III.est.v_epsilon1}, and \eqref{III.est.v_epsilon}, we deduce that $w_\epsilon:=w_{\epsilon,n_3}$ belongs to $Y$ and satisfies \eqref{III.approximation}. The proof is complete.

\end{proof}

\end{footnotesize}

\subsection*{Acknowledgements}
The authors were supported by the project \emph{LO1506 of the Czech Ministry of Education, Youth and Sports}.

\end{document}